\documentclass{article}
\usepackage[
  journal=MSL,
  lang=british,
]{ems-journal}

\makeatletter
\@ifl@t@r\fmtversion{2024-11-01}{%
  \def\H@refstepcounter#1{%
    \stepcounter{#1}%
    \edef\@currentcounter{#1}%
    \protected@edef\@currentlabel{%
      \csname p@#1\expandafter\endcsname\csname the#1\endcsname}%
  }%
}{}
\makeatother

\usepackage{mathtools}
\usepackage{bm}
\usepackage{enumitem}
\usepackage{microtype}

\theoremstyle{plain}
\newtheorem{theorem}{Theorem}[section]
\newtheorem{proposition}[theorem]{Proposition}
\newtheorem{lemma}[theorem]{Lemma}
\newtheorem{corollary}[theorem]{Corollary}
\theoremstyle{definition}
\newtheorem{definition}[theorem]{Definition}
\newtheorem{example}[theorem]{Example}
\theoremstyle{remark}
\newtheorem{remark}[theorem]{Remark}
\numberwithin{equation}{section}

\newcommand{\PP}{\mathbb P}
\newcommand{\EE}{\mathbb E}
\newcommand{\R}{\mathbb R}
\newcommand{\1}{\mathbf 1}
\newcommand{\diag}{\operatorname{diag}}
\newcommand{\pref}{\preceq}
\newcommand{\cI}{\mathcal I}

\begin{document}

\title{A proof of Ross's conjecture for two-site moving-target search}

\emsauthor{1}{
  \givenname{Yunpeng}
  \surname{Li}
  \mrid{}
  \orcid{}}{Y.~Li}

\Emsaffil{1}{
  \department{School of Data Science}
  \organisation{The Chinese University of Hong Kong, Shenzhen}
  \rorid{}
  \address{}
  \zip{}
  \city{}
  \country{China}
  \affemail{liyunpeng@cuhk.edu.cn}
}

\classification[90C40, 93E20]{90B40}
\keywords{moving-target search, Ross conjecture, threshold policy, partially observed Markov decision process, total positivity, combinatorics on words}

\begin{abstract}
A target moves between two sites according to a discrete-time Markov
chain with a $2\times2$ transition matrix $M$.  At each epoch one site is searched
at positive cost, and a search may overlook a target that is present.
Ross conjectured that an optimal policy is threshold in the posterior
probability that the target is at site~1.  MacPhee and Jordan proved the
conjecture throughout the nonpositive-determinant ($\det M\le0$) regime
and for part of the positive-determinant ($\det M>0$) regime, leaving
the remaining cases open.  We prove threshold optimality throughout the
positive-determinant regime, completing Ross's conjecture for
all parameter values.
\end{abstract}

\maketitle
\thispagestyle{empty}

\section{Introduction}
\label{sec:introduction}

Search for a target whose location evolves while the search is in
progress is a classical problem in stochastic control and search theory;
see Brown~\cite{brown1980} and the survey of Benkoski, Monticino and
Weisinger~\cite{benkoski1991}.  The two-region Markovian model underlying
the present problem was introduced by Pollock~\cite{pollock1970}, and
Schweitzer~\cite{schweitzer1971} gave a recursive procedure for computing
its threshold.  Ross later formulated the general threshold conjecture
studied here~\cite[Chapter~III, Section~5]{ross1983}.  A target occupies
one of two sites and moves after each unsuccessful search according to a
two-state Markov chain.  Write $a$ and
$b$ for the one-step probabilities of remaining at sites~1 and~2,
respectively; the determinant of the transition matrix is then
$\Delta=a+b-1$.  Searching site $i$ costs $C_i>0$; if the target is there, the
search fails to detect it with probability $\alpha_i\in[0,1]$.
Ross's conjecture is usually stated for $\alpha_i<1$, so that each site
is in principle detectable.  Let $X_t\in\{1,2\}$ denote the
target's location at the beginning of search epoch $t$, and let
$\mathcal H_t$ be the $\sigma$-field generated by all search decisions
and observations made before the search at that epoch is chosen.  The
sufficient statistic is the scalar posterior belief
\begin{equation}
 p_t:=\PP(X_t=1\mid\mathcal H_t).
 \label{eq:belief}
\end{equation}
At a generic search epoch we write $p$ for the current value of $p_t$.
Ross conjectured that the optimal decision is monotone in this belief:
there should be a threshold such that the low-belief side is searched at
site~2 and the high-belief side at site~1.

Although the belief state is one-dimensional, the conjecture is
substantially more delicate than it appears.  If a search fails, the
posterior first changes because of the action-dependent overlook
probability and then changes again because the target moves.  The two
Bayesian update maps are linear fractional rather than affine.  The
infinite-horizon value is an infimum of affine open-loop costs and can
have infinitely many linear pieces.  Thus a direct attempt to prove that
the difference of the two Bellman branches is monotone leads back to the
unknown value function.

White~\cite{white1992} gave an early partial proof of Ross's
conjecture together with structural machinery for the problem.  MacPhee
and Jordan~\cite{macphee1995} later gave the most extensive earlier
analysis of the discrete-time two-site model with overlooking.  Under
their standing transition assumption $0<a,b<1$, their Theorem~5
explicitly includes every transition law with $a+b\le1$, as well as
several further regions when $a+b>1$.  With the convention for $a$ and
$b$ above, their quantity $a-(1-b)$ is exactly our determinant
$\Delta=a+b-1$.  Their work therefore settled the entire strict
negative-determinant regime.
Jordan's subsequent thesis~\cite[Sections~4.4--4.5]{jordan1997}
develops the same case analysis and
again identifies unresolved subcases of the positive-determinant
class~4.2.  The later account of Flesch, Karag\"ozo\u{g}lu and
Perea~\cite{flesch2009}
explicitly states that the discrete-time conjecture was still unproved
in full.  Yu and Ye~\cite{yuye2015} subsequently studied the validity and
optimality of search strategies and the infimum expected number of
searches, completely resolving the last question and obtaining partial
results on the first two.
Weber~\cite{weber1986} proved an analogue for a continuous-time moving target,
and Assaf and Sharlin-Bilitzky~\cite{assaf1994} studied a related
continuous-time effort-allocation model.

The problem is also a small partially observed Markov decision process
(POMDP).  Finite-horizon POMDP values are piecewise linear by the
classical theory of Smallwood and Sondik~\cite{smallwood1973}, and a
large literature gives sufficient conditions for monotone or threshold
policies; see Lovejoy~\cite{lovejoy1987} and the structural treatment in
Krishnamurthy~\cite[Chapters~10--12]{krishnamurthy2016}.  Such results are powerful when
transition, observation and cost orders align in the required
supermodular or monotone-likelihood-ratio fashion.  In Ross's model the
observation kernel is itself action dependent, and the unresolved
parameter range is precisely where the standard one-step comparisons do
not give the required global single-crossing property.
Despite its elementary formulation, the problem captures important
applications.  For example, Johnston and
Krishnamurthy~\cite{johnston2006} reformulated an opportunistic
file-transfer problem over a Gilbert--Elliott fading channel as a
Markovian search problem and obtained threshold policies by applying
MacPhee and Jordan's results.
These connections make a complete resolution of Ross's conjecture
useful beyond the original search formulation.

Our main result proves Ross's conjecture throughout its classical
parameter range.

\begin{theorem}[Ross's conjecture]
\label{thm:ross}
For every $a,b\in[0,1]$, every
$\alpha_1,\alpha_2\in[0,1)$, and every $C_1,C_2>0$, there exist
a threshold $p^*\in[0,1]$ and an optimal policy of the following form:
search site~2 whenever $p<p^*$, and search site~1 whenever $p>p^*$.
At $p=p^*$, either site may be selected.
\end{theorem}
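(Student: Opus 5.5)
The plan is to split by the sign of $\Delta=a+b-1$ and to concentrate all new work on the positive-determinant regime.

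\textbf{Step 1: reduction to $\Delta>0$.}
For $\Delta\le0$, the result of MacPhee and Jordan~\cite{macphee1995} (their Theorem~5) already yields threshold optimality when $0<a,b<1$. The boundary transition laws with $a$ or $b$ in $\{0,1\}$ I would treat directly, or by a continuity/limiting argument in $(a,b)$. The limiting argument rests on two facts: the value function depends continuously on the parameters, and a limit of threshold rules whose thresholds converge (after passing to a subsequence in $[0,1]$) is still optimal for the limiting problem, since the Bellman branches converge uniformly.

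\textbf{Step 2: set-up for $\Delta>0$.}
Here I would work with finite-horizon values $V_n$, which are concave and piecewise linear, and pass to $V=\lim V_n$ by value iteration, using the same uniform-convergence argument for the thresholds. Write the two Bellman branches as
\[
Q_i(p)=C_i+\PP_i(\text{fail}\mid p)\,V\bigl(T_i(p)\bigr),
\]
where $T_i$ is the linear-fractional update: overlook at site $i$, then one step of $M$. It is convenient to homogenise, viewing $V$ as a concave, degree-one homogeneous function $\tilde V$ on unnormalised vectors $(x_1,x_2)\in\R_{\ge0}^2$. Then
\[
Q_i(p)=C_i+\tilde V\bigl(M^{\top}D_i(p,1-p)^{\top}\bigr),
\qquad
D_1=\diag(\alpha_1,1),\quad D_2=\diag(1,\alpha_2).
\]
With this notation the threshold property is equivalent to the following single-crossing statement: $G:=Q_1-Q_2$ has at most one sign change on $[0,1]$, from $+$ to $-$ as $p$ increases.

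\textbf{Step 3: open-loop representation.}
Since $V$ is an infimum of affine open-loop costs over search words $w\in\{1,2\}^{\mathbb N}$, each branch $Q_i$ is an infimum over words beginning with $i$ of functions
\[
\ell_w(p)=\sum_k C_{w_k}\,\1^{\top}\prod_{j<k}\bigl(M^{\top}D_{w_j}\bigr)(p,1-p)^{\top}.
\]
When $\Delta>0$, the matrix $M$ is totally positive of order~$2$. Each $D_i$ is diagonal and positive, so it is also TP$_2$, and hence all products $M^{\top}D_{w_j}$ preserve the monotone-likelihood-ratio order on beliefs.

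The plan is a word-exchange argument. Suppose $p<q$ and site~1 is optimal at $p$, so some word $1u$ is optimal there. Then for every word $2v$, the comparison $\ell_{1u}\le\ell_{2v}$ at $p$ should transfer to $q$. The tool I would try first is an interchange lemma: compare $1u$ with the word obtained by swapping the first position where the two words differ. The differences $\ell_{1u}-\ell_{2v}$ are then bilinear forms in $(p,1-p)$ whose coefficients are minors of products of TP$_2$ matrices, and the Cauchy--Binet formula gives them a sign. Establishing a general sign-regularity result for these differences across arbitrary pairs of words is essentially a combinatorics-on-words statement.

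\textbf{Main obstacle.}
The hardest step is this global single crossing. The infimum over $u$ and over $v$ is not preserved by pointwise comparisons, so what is needed is a coupling. For each optimal continuation $2v$ at $q$, I must produce a word of the form $1u'$ that beats it at $q$, and it must be built from the word optimal at $p$. My first attempt would be the candidate $u'=$ ``$u$ with the first $2$ in $v$ inserted'': I would check its dominance by telescoping the cost differences and using that $\det(M^{\top}D_i)=\Delta\,\det D_i>0$, which controls the sign of each cross term. If that candidate fails in some subregime, for example when $\alpha_1$ and $\alpha_2$ are very unequal, I would fall back on the case classes left open by MacPhee and Jordan and handle the residual ones through the explicit structure of the optimal words. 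Because $\Delta>0$, that structure should be an eventually periodic word of the form $1^k2^m\cdots$.
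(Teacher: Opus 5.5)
\textbf{There is a genuine gap: the positive-determinant single-crossing step, which is the whole new content of the theorem, is never proved.}

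Your Step~1 is acceptable in outline, and it matches how the paper closes the boundary cases. A strict reverse crossing $D_\theta(p)<0<D_\theta(p')$ would persist under small perturbations of $\theta$, because a uniform truncation bound makes $D_\theta$ jointly continuous. Note that the statement also allows $\alpha_i=0$, which you should send through the same limit.

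The problem is the positive-determinant step. You give no argument for single crossing there, and the mechanism you hope will supply one does not hold in the generality you need. You expect $\ell_{1u}-\ell_{2v}$ to acquire a sign from Cauchy--Binet for arbitrary pairs of words. This is false even though every product $A_w$ is strictly TP$_2$. Take $H=1\,2^n$ and $L=2\,1^n$ with $a,b$ close to $1$ and $n$ large. At $p=0$, $H$ wastes one search and then keeps searching the correct site, so $J_H(0)<J_L(0)$. Symmetrically, $J_H(1)>J_L(1)$. That is exactly an upward crossing of the affine difference.

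Even for the pairs that do matter, the paper notes that individual entries of $R_H-R_L$ can have either sign, so no entrywise sign argument will close. Your coupling candidate (insert the first $2$ of $v$ into $u$) is unverified. Your fallback, handling the residual MacPhee--Jordan classes through guessed eventually periodic optimal words, is precisely the part of the problem that was open.

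\textbf{First missing idea: reduce to one constrained pair of words, not a coupling across different beliefs.}
\begin{itemize}
\item Work at finite horizon. Argue by contradiction at the first point $\widehat p$ where $D_{n+1}$ turns positive, with $p_j\downarrow\widehat p$ and $D_{n+1}(p_j)>0$.
\item Let $H$ and $L$ be the words obtained by forcing first actions $1$ and $2$, then following the \emph{same} monotone stage selectors supplied by the induction hypothesis.
\item There are only finitely many words of length $n+1$, so the pair can be fixed along a subsequence. This finiteness is lost if you treat $V$ directly as an infimum over $\{1,2\}^{\mathbb N}$.
\item Then $J_H-J_L$ is affine, vanishes at $\widehat p$ and is positive at $p_j$. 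This forces $J_H(0)<J_L(0)$ and $J_H(1)>J_L(1)$.
\item When $\Delta>0$ the movement step is an increasing strict contraction in log odds. A common monotone selector sends the higher-belief trajectory to site~$1$.
\item Hence $N_1(H_{1:k})-N_1(L_{1:k})$ stays in $\{0,1\}$: the pair is height-one. This excludes examples like the one above.
\end{itemize}

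\textbf{Second missing idea: a certificate valid for height-one pairs.}
\begin{itemize}
\item Such a pair converts $L$ into $H$ by $21\to12$ swaps and at most one $2\to1$ replacement, and their left prefixes form a chain.
\item The factorisation $A_1A_2-A_2A_1=K\Gamma$ holds with $K=\bigl(\begin{smallmatrix}0&-\beta_1\\ \beta_2&0\end{smallmatrix}\bigr)$ and $\Gamma$ entrywise positive.
\item Projective intervals are nested: $\mathcal I(A_{u'})\subseteq\mathcal I(A_u)$ for $u\preceq u'$.
\item Together these give a single $t_*>0$ with $(1,-t_*)(R_H-R_L)\le0$, which forbids the endpoint pattern above.
\end{itemize}

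Without the height-one restriction and this common projective separator, your word-exchange step has nothing behind it.
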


Thus the conjecture holds for every two-site transition matrix.  The
part not covered by prior work consists of the unresolved subcases of
MacPhee and Jordan's positive-determinant class~4.2
\cite[Sections~4--5]{macphee1995}. 

The rest of the paper is organised as follows.  Section~\ref{sec:model}
formulates the problem and its Bellman equation.
Sections~\ref{sec:words}--\ref{sec:infinite} prove threshold optimality
throughout this regime,
including that class.  Section~\ref{sec:negative} treats $\Delta=0$ and
states the threshold theorem for $\Delta<0$; Appendix~\ref{app:negative}
provides a self-contained, reorganised proof of that theorem.
Section~\ref{sec:boundary} treats the remaining boundary parameter
values: transition endpoints $a,b\in\{0,1\}$, perfect detection
$\alpha_i=0$, and completely ineffective searches $\alpha_i=1$.  It
also determines exactly when the expected total cost is finite.
Finally, Section~\ref{sec:discussion} compares the arguments for the
positive- and negative-determinant regimes and places the new method in
the context of structural POMDP theory and related word-based methods
in stochastic control.

\section{Model and Bellman formulation}
\label{sec:model}

At the beginning of each search epoch the target is at site $1$ or
site $2$.  If it is at site $i$ and site $i$ is searched, it is missed
with probability $\alpha_i\in[0,1]$\footnote{The value $\alpha_i=1$
means that searching site $i$ can never detect the target.
Sections~\ref{sec:words}--\ref{sec:negative} treat the proper-detection
range $\alpha_i<1$; the endpoints $\alpha_i=1$ are classified in
Section~\ref{sec:boundary}.}; a search of the wrong site also fails.
Search $i$ costs $C_i>0$.  After an unsuccessful search, the
target moves according to
\begin{equation}
 M=
 \begin{pmatrix}
 a&1-a\\
 1-b&b
 \end{pmatrix},
 \qquad 0\le a,b\le1.
 \label{eq:M}
\end{equation}
Thus $a$ and $b$ are the one-step probabilities of remaining at sites
$1$ and $2$, respectively.  We write
\begin{equation}
 \Delta:=\det M=a+b-1.
 \label{eq:Delta}
\end{equation}
The case $\Delta>0$ is the positive-determinant regime.

At a generic search epoch, write $p$ for the posterior belief defined
in \eqref{eq:belief}.  The probabilities of
an unsuccessful search at sites~1 and~2, respectively, are
\begin{equation}
 q_1(p)=\alpha_1p+1-p,
 \qquad
 q_2(p)=p+\alpha_2(1-p).
 \label{eq:q}
\end{equation}
Conditional on an unsuccessful search and the subsequent target
movement, the next beliefs after searching sites~1 and~2, respectively,
are
\begin{align}
 F_1(p)
 &=\frac{\alpha_1ap+(1-b)(1-p)}{\alpha_1p+1-p},
 \label{eq:F1}\\
 F_2(p)
 &=\frac{ap+\alpha_2(1-b)(1-p)}{p+\alpha_2(1-p)}.
 \label{eq:F2}
\end{align}
We write
\[
 \theta=(a,b,\alpha_1,\alpha_2,C_1,C_2)
\]
for the parameter vector.  We first analyse the strict-interior case
$0<a,b<1$ and $0<\alpha_1,\alpha_2<1$, in which the denominators in
\eqref{eq:F1}--\eqref{eq:F2} are positive.  Section~\ref{sec:boundary}
later treats endpoint parameters, where a denominator $q_i(p)$ may
vanish.  In that case $F_i(p)$ itself need not be defined, but, for
bounded $f$, the continuation term $q_i(p)f(F_i(p))$ extends uniquely
and continuously by setting it equal to zero.

For a fixed parameter vector $\theta$, let $V(p)\in[0,\infty]$ denote
the infimum, over all search policies including history-dependent and
randomised policies, of the expected total search cost until detection.
We assign cost $+\infty$ to a sample path on which detection never
occurs.  We call $\theta$ \emph{proper} if $V(p)<\infty$ for every
$p\in[0,1]$.

For $\alpha_1,\alpha_2<1$, the dynamic-programming equation that we will
prove for $V$ is
\begin{equation}
 V(p)=\min_{i\in\{1,2\}}
 \left\{C_i+q_i(p)V(F_i(p))\right\}.
 \label{eq:bellman}
\end{equation}
Because the total-cost criterion is undiscounted, the validity of
\eqref{eq:bellman} and the optimality of a policy that, for each current
posterior $p$, chooses a search site attaining the minimum on the
right-hand side both require proof.
Section~\ref{sec:infinite} constructs a uniformly bounded
reference policy, proves uniform convergence of the finite-horizon
values to $V$, establishes \eqref{eq:bellman}, and verifies that such a
policy is optimal.

For a bounded function $f$ on $[0,1]$, define the Bellman operator
\begin{equation}
 (\mathcal T f)(p)
 :=\min_{i\in\{1,2\}}\{C_i+q_i(p)f(F_i(p))\}.
 \label{eq:bellman-operator}
\end{equation}
Throughout the paper, a \emph{continuation} is the policy followed from
the next search epoch onward, conditional on the current search failing
and the target subsequently moving.  In value-function arguments, a
continuation is represented by a cost-to-go function $f$ of the next
belief.  Thus
$\mathcal B_i f(p):=C_i+q_i(p)f(F_i(p))$ is the expected cost of
searching site~$i$ now and, if that search is unsuccessful, following
the continuation represented by $f$.  Two current actions have the
\emph{same continuation} when the same future policy---or, at the
value-function level, the same function $f$---is used after failure
under either action.
Once \eqref{eq:bellman} has been established, define the two Bellman
branch costs
\begin{equation}
 Q_i(p):=C_i+q_i(p)V(F_i(p)),\qquad i=1,2,
 \label{eq:Q-infinite}
\end{equation}
and their action differential
\begin{equation}
 D(p):=Q_1(p)-Q_2(p).
 \label{eq:D}
\end{equation}
Site~1 is strictly optimal when $D(p)<0$ and site~2 is strictly optimal
when $D(p)>0$.

\begin{definition}[Threshold selector]
A deterministic stationary selector $\mu:[0,1]\to\{1,2\}$ is a
threshold selector if there is $p^*\in[0,1]$ such that
\[
 p<p^*\Longrightarrow\mu(p)=2,
 \qquad
 p>p^*\Longrightarrow\mu(p)=1.
\]
At $p=p^*$ either action is permitted.  More generally, if an interval
of beliefs consists entirely of ties, a threshold may be placed at any
point of that interval and the ties resolved monotonically.
\end{definition}

This formulation includes degenerate cases in which the same action is
optimal throughout the relevant belief interval.

\begin{theorem}[Positive-determinant case]
\label{thm:positive}
Suppose
\begin{equation}
 0<a,b<1,
 \qquad
 0<\alpha_1,\alpha_2<1,
 \qquad
 C_1,C_2>0,
 \qquad
 \Delta>0.
 \label{eq:positive-strict}
\end{equation}
Then the infinite-horizon problem \eqref{eq:bellman} admits an optimal
deterministic stationary threshold selector.
\end{theorem}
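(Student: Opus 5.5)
We will prove Theorem~\ref{thm:positive} by establishing a single-crossing property for finite-horizon action differentials and then passing to the limit. Set $V_0\equiv0$, $V_{n+1}=\mathcal T V_n$ and $D_n(p)=\mathcal B_1V_n(p)-\mathcal B_2V_n(p)$. Each $V_n$ is concave and piecewise linear, being a minimum of finitely many affine open-loop costs. Such an open-loop cost is indexed by a word $w\in\{1,2\}^n$ of search sites and equals
\[
 c_w(p)=\sum_{k} C_{w_k}\,\pi^\top \Bigl(\prod_{j<k} O_{w_j}M\Bigr)\1,\qquad \pi=(p,1-p),\quad O_i=\diag(\text{miss probabilities under search } i).
\]
The first step is to show that $D_n$ changes sign at most once, from $+$ to $-$, as $p$ increases. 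Given that, the stationary selector $\mu_n$ has threshold form.

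The key structural step uses $\Delta>0$. In that case $M$ is totally positive of order~2, so the updates $F_1,F_2$ are increasing linear-fractional maps. Moreover, $F_1(p)\le F_2(p)$, since searching site~1 and failing lowers the belief in site~1. We will compare words by a preorder $\pref$: swapping an adjacent ``$21$'' to ``$12$'' changes the cost by an affine function of $p$ whose sign is governed by a $2\times2$ determinant. By TP2 this determinant is monotone in $p$.

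Concretely, we will prove the following. If $w=1u$ is optimal at $p$ among words beginning with~1, and $w'=2u'$ is optimal at $p'$ among words beginning with~2, with $p'<p$, then an exchange/interleaving argument on the suffixes $u,u'$ (combinatorics on words) yields competitors showing that $D_n(p)\le0$ whenever $D_n(p')\le0$. Equivalently, the set $\{D_n\le 0\}$ is an up-set. We will carry this out by induction on $n$. Write $D_{n+1}(p)$ as $C_1-C_2+q_1V_n(F_1p)-q_2V_n(F_2p)$. By the inductive threshold $p_n^*$ for $V_n$, both branches are piecewise given by explicit branch words. The cases then split according to whether $F_1p$ and $F_2p$ lie on the same or opposite sides of $p_n^*$. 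The same-side case reduces to a one-step TP2 comparison. The straddling case $F_1p<p_n^*<F_2p$ is the delicate one.

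We expect the main obstacle to be this straddling case, which is exactly the unresolved region of MacPhee--Jordan's class~4.2. There the two continuations differ, so the linear-fractional nonlinearity does not cancel. Our first attempt will be to exploit the fact that in this case the words are of the form $1\,2\,w$ versus $2\,1\,w$ with a common tail $w$ after two steps. Because both orders reach the same unnormalised vector up to the commuting-failure structure, we can compare them through $O_1MO_2M$ versus $O_2MO_1M$. The sign of the resulting difference is then a single affine function of $p$ times a positive factor, using $\Delta>0$ and $\det O_i>0$. If the common-tail reduction fails, we will instead iterate the argument along the orbit of $F_1\circ F_2$ and use monotone convergence of thresholds.

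Finally, we will pass to the infinite horizon using results of Section~\ref{sec:infinite}: $V_n\to V$ uniformly and \eqref{eq:bellman} holds. Hence $D_n\to D$ uniformly. Single crossing is preserved under uniform limits in the weak sense that $\{D<0\}$ lies above $\{D>0\}$, so $p^*:=\sup\{p:D(p)>0\}$ works, with ties resolved monotonically. The remaining step is to confirm that the Bellman-attaining selector is optimal, which Section~\ref{sec:infinite} provides for any selector attaining the minimum.
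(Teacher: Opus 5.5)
The infinite-horizon part of your proposal (uniform convergence $V_n\to V$, the supremum threshold, and verification for any Bellman-attaining selector) matches the paper and is fine. The gap is the finite-horizon single-crossing step, which the proposal never actually proves.

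Your concrete reduction for the straddling case rests on two false claims. First, $O_1MO_2M\ne O_2MO_1M$. With $A_i=O_iM$ one has $A_1A_2-A_2A_1=K\Gamma$, and every entry of this matrix is nonzero, so $12$ and $21$ do not lead to the same unnormalised vector. Second, the two branch words need not share a common tail after two letters. After the forced first actions, the log-odds gap between the two failure trajectories shrinks at each movement step but never closes: its absolute value stays in $(0,\Lambda)$. So whenever a later stage threshold falls between the two beliefs, the trajectories take opposite actions again, and the words can disagree through arbitrarily many excursions. The same objection defeats your ``same-side'' case. A common next letter does not make the continuations equal, so that case does not reduce to a one-step TP2 comparison either. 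The fallback of iterating along the orbit of $F_1\circ F_2$ is not an argument.

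More fundamentally, treating swaps one at a time cannot work even if each swap were fully understood. Each swap changes the cost by an affine function, and a sum of affine functions none of which crosses zero upwards can itself cross upwards. For example, endpoint pairs $(d_1,d_2)=(1,1)$ and $(-\tfrac12,-2)$ sum to $(\tfrac12,-1)$. The missing ideas are these.
\begin{enumerate}[label=(\roman*),leftmargin=*]
\item The two Bellman branch words $H,L$ generated by common monotone stage selectors form a \emph{height-one} pair, $N_1(H_{1:k})-N_1(L_{1:k})\in\{0,1\}$. This is exactly what the log-odds contraction delivers.
\item Such a pair is connected by $21\to12$ moves and at most one $2\to1$ replacement, and their left prefixes form a chain under $\pref$.
\item Each increment $R_{u12v}-R_{u21v}=A_uK(I+\Gamma R_v)$ is separated by every $t\in\cI(A_u)$. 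The nesting $\cI(A_{u'})\subseteq\cI(A_u)$ then lets a single $t_*$, taken from the deepest prefix, serve for all increments. Hence $(1,-t_*)(R_H-R_L)\le0$, and the pattern $d_2<0<d_1$ is impossible.
\end{enumerate}
You still have to transfer this from word pairs to $D_{n+1}$. The paper does this at the first positive point $\widehat p$, fixing one pair $(H,L)$ that attains both branch values along a sequence $p_j\downarrow\widehat p$.

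Finally, your target that $\{D_n\le0\}$ is an up-set is strictly stronger than the one-sided property ($p<p'$ and $D_n(p)<0$ imply $D_n(p')\le0$), not equivalent to it. Only the one-sided property is needed, and it is the form your limit step uses.
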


Theorem~\ref{thm:positive} establishes threshold optimality when
$0<a,b<1$, $0<\alpha_1,\alpha_2<1$, and $\Delta>0$.  In particular, it
resolves the unresolved subcases of MacPhee and Jordan's
positive-determinant class~4.2~\cite[Sections~4--5]{macphee1995}.

We will first analyse the finite-horizon problem, for which we introduce
the following value functions.  Set $V_0\equiv0$ and define recursively
\begin{equation}
 V_{n+1}(p)=\min_{i\in\{1,2\}}
 \left\{C_i+q_i(p)V_n(F_i(p))\right\},
 \qquad n\ge0.
 \label{eq:finite-bellman}
\end{equation}
Thus $V_n$ is the minimum expected cost accumulated until detection or
until $n$ searches have been performed, whichever occurs first, with zero
terminal cost if the target remains undetected.
Define the finite-horizon branch costs
\begin{equation}
 Q_{n+1,i}(p):=C_i+q_i(p)V_n(F_i(p)),\qquad i=1,2,
 \label{eq:Qn}
\end{equation}
and set
\begin{equation}
 D_{n+1}(p):=Q_{n+1,1}(p)-Q_{n+1,2}(p).
 \label{eq:Dn}
\end{equation}
Our finite-horizon target is the one-sided single-crossing property
\begin{equation}
 p<p',
 \qquad D_n(p)<0
 \quad\Longrightarrow\quad
 D_n(p')\le0.
 \label{eq:finite-singlecross}
\end{equation}
It is exactly what is needed for a monotone choice of minimiser: as the
belief increases, strict preference may switch from site~2 to site~1,
but it can never switch back.

\section{Joint non-detection probabilities and search words}
\label{sec:words}

This section introduces two representations used throughout the proof.
The first uses matrices to track separately the probability that the
target has not yet been detected and is at site~1, and the probability
that it has not yet been detected and is at site~2.  The second uses
finite words over the alphabet $\{1,2\}$ to record the successive sites
searched on a path along which every search so far has been unsuccessful.

\subsection{Matrix updates for joint non-detection probabilities}

After any sequence of searches, let $\nu=(\nu_1,\nu_2)$, where $\nu_j$
is the probability that the target has not yet been detected and is
currently at site~$j$, for $j=1,2$.  Before the first search these
probabilities are $p$ and $1-p$, so let
\[
 \rho(p):=(p,1-p)
\]
be the corresponding initial row vector.  Define
\begin{equation}
 A_1=\diag(\alpha_1,1)M
 =\begin{pmatrix}
 \alpha_1a&\alpha_1(1-a)\\
 1-b&b
 \end{pmatrix},
 \label{eq:A1}
\end{equation}
\begin{equation}
 A_2=\diag(1,\alpha_2)M
 =\begin{pmatrix}
 a&1-a\\
 \alpha_2(1-b)&\alpha_2b
 \end{pmatrix}.
 \label{eq:A2}
\end{equation}
Suppose the current joint-probability vector is $\nu$.  If site~$i$ is
searched, then the $j$th component of $\nu A_i$ is the probability that
the target remains undetected after that search and is at site~$j$ after the
subsequent movement.

To express search costs in the same matrix notation, let
\begin{equation}
 E_1=\begin{pmatrix}1&0\\1&0\end{pmatrix},
 \qquad
 E_2=\begin{pmatrix}0&1\\0&1\end{pmatrix},
 \qquad
 c=\binom{C_1}{C_2}.
 \label{eq:E}
\end{equation}
Let $\1=(1,1)^\top$.  Then $E_i c=C_i\1$, and hence
\[
 \nu E_ic=C_i(\nu_1+\nu_2).
\]
Here $\nu_1+\nu_2$ is the probability that the process reaches the
current search without prior detection.  Thus $\nu E_ic$ is the
expected cost contributed by searching site~$i$ at this epoch.

\subsection{Search words and notation}

A \emph{word} is simply a finite string of symbols from the alphabet
$\{1,2\}$~\cite{lothaire1997}.  In this paper the symbols are search
actions, so a word $w=w_1\cdots w_n$ means that, conditional on still
not having detected the target, search $w_k$ is used at epoch $k$.
We write $|w|=n$ for its length and $\varnothing$ for the empty word.
If $1\le k\le n$, then $w_{1:k}=w_1\cdots w_k$ is the length-$k$
prefix.  Concatenation is denoted by juxtaposition: if $u$ and $v$ are
words, then $uv$ is the word obtained by writing $v$ after $u$.  We
write $u\pref w$ when $u$ is a prefix of $w$.  A \emph{prefix chain}
is a collection of words totally ordered by $\pref$; a deepest member
is one of maximal length.

For a word $w=w_1\cdots w_n$, write
\begin{equation}
 A_w:=A_{w_1}\cdots A_{w_n},
 \qquad A_\varnothing:=I,
 \label{eq:Aw}
\end{equation}
and
\begin{equation}
 R_w
 :=\sum_{k=1}^n
 A_{w_1\cdots w_{k-1}}E_{w_k},
 \qquad R_\varnothing:=0.
 \label{eq:Rw}
\end{equation}
The matrix $R_w$ is the accumulated cost operator of the word.

\begin{lemma}[Word cost]
\label{lem:word-cost}
The expected search cost of following the sequence $w$ until either the
target is detected or all letters of $w$ have been used is
\begin{equation}
 J_w(p)=\rho(p)R_wc.
 \label{eq:Jw}
\end{equation}
In particular, $J_w$ is affine in $p$.
\end{lemma}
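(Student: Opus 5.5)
The plan is to show, by induction on the length of the word, that $\rho(p)A_{w_{1:k-1}}$ is exactly the vector of joint non-detection probabilities just before the $k$th search. Once that is in place, the cost formula reduces to summing the per-epoch costs described after \eqref{eq:E}.

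\emph{Step 1 (state vectors).} Write $\nu^{(k)}$ for the row vector whose $j$th entry is the probability that the first $k-1$ searches of $w$ have all failed and the target is at site~$j$ at epoch $k$. For $k=1$ no search has occurred, so $\nu^{(1)}=\rho(p)=\rho(p)A_\varnothing$. For the inductive step, suppose the target is at site~$j$ at epoch $k$ and site $w_k$ is searched. The search fails with probability $\alpha_{w_k}$ if $j=w_k$ and with probability $1$ otherwise, which is the $j$th diagonal entry of the corresponding $\diag$ factor. The target then moves by $M$, independently of the past given its current location. Hence $\nu^{(k+1)}=\nu^{(k)}\diag(\cdot)M=\nu^{(k)}A_{w_k}$, which is \eqref{eq:A1}--\eqref{eq:A2}. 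It follows that $\nu^{(k)}=\rho(p)A_{w_1\cdots w_{k-1}}$.

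\emph{Step 2 (costs).} The search at epoch $k$ is performed exactly on the event that the first $k-1$ searches failed. That event has probability $\nu^{(k)}_1+\nu^{(k)}_2$, and on it the search costs $C_{w_k}$. So the expected cost incurred at epoch $k$ is $C_{w_k}\,\nu^{(k)}\1=\nu^{(k)}E_{w_k}c$, using $E_ic=C_i\1$. Summing over $k=1,\dots,n$ and using linearity of expectation together with Step~1 gives
\[
 J_w(p)=\sum_{k=1}^n\rho(p)A_{w_1\cdots w_{k-1}}E_{w_k}c=\rho(p)R_wc,
\]
by \eqref{eq:Rw}. For the empty word both sides are $0$.

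\emph{Step 3 (affinity).} Since $\rho(p)=(p,1-p)$ is affine in $p$ and $R_wc$ is a fixed vector, $J_w(p)$ is affine in $p$.

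The only point needing care is Step~1. There one must verify that the search outcome at epoch $k$ depends only on the current location, and that the motion after a failed search is the Markov step $M$ applied regardless of the past. Both hold by the model description, so this is a direct conditioning argument rather than a real obstacle. No division by $q_i$ is needed, so the argument also covers the endpoint parameter values.
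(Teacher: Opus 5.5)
Your proof is correct and follows essentially the same route as the paper: identify $\rho(p)A_{w_1\cdots w_{k-1}}$ as the joint non-detection vector before search $k$, read off that search's expected cost $\rho(p)A_{w_1\cdots w_{k-1}}E_{w_k}c$, and sum over $k$. The paper takes the state-vector identity as already established when $A_1,A_2$ are introduced, whereas you verify it by a short induction; that is a reasonable addition, not a different argument.
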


\begin{proof}
Before search $k$, the probabilities that the target has not yet been
detected and is at sites~1 and~2, respectively, form the row vector
\[
 \rho(p)A_{w_1\cdots w_{k-1}}.
\]
The expected cost paid at that search is therefore
\[
 \rho(p)A_{w_1\cdots w_{k-1}}E_{w_k}c.
\]
Summing over $k$ gives \eqref{eq:Jw}.
\end{proof}

The matrices satisfy the concatenation identities
\begin{equation}
 A_{uv}=A_uA_v,
 \qquad
 R_{uv}=R_u+A_uR_v.
 \label{eq:concat}
\end{equation}

\subsection{Finite-horizon open-loop reduction}

An \emph{open-loop policy} for a horizon of $n$ searches fixes a word
$w=w_1\cdots w_n$ in advance and, provided the target has not yet been
detected, searches site~$w_k$ at epoch $k$.  The next proposition shows
that allowing the policy to depend on past observations or to randomise
does not improve the finite-horizon value.

\begin{proposition}[Open-loop representation]
\label{prop:open-loop}
For every $n\ge0$,
\begin{equation}
 V_n(p)=\min_{w\in\{1,2\}^n}J_w(p).
 \label{eq:Vn-word}
\end{equation}
Consequently $V_n$ is a continuous, piecewise-affine concave function
of $p$.
\end{proposition}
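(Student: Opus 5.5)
The plan is to prove \eqref{eq:Vn-word} by induction on $n$, using the recursion \eqref{eq:finite-bellman} together with the concatenation identities \eqref{eq:concat}. The key observation is that the belief update and the joint-probability update are compatible:
\[
 \rho(p)A_i=q_i(p)\,\rho(F_i(p)).
\]
This holds whenever $q_i(p)>0$, which is always the case in the strict interior $0<\alpha_i<1$. It follows directly from \eqref{eq:A1}--\eqref{eq:A2} and \eqref{eq:F1}--\eqref{eq:F2}. Indeed, the row sums of $\rho(p)A_i$ equal $q_i(p)$ because $M$ is stochastic, and the first coordinate divided by $q_i(p)$ is exactly $F_i(p)$. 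Together with $\rho(p)E_ic=C_i$, this gives for any word $v$:
\[
 J_{iv}(p)=\rho(p)\bigl(E_i+A_iR_v\bigr)c=C_i+q_i(p)\,\rho(F_i(p))R_vc=C_i+q_i(p)J_v(F_i(p)).
\]

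For the base case, $V_0\equiv0=J_\varnothing$. For the inductive step, assume $V_n=\min_{|v|=n}J_v$. Since $q_i(p)\ge0$, the map $f\mapsto C_i+q_i(p)f(F_i(p))$ is monotone and commutes with minima over a finite set. Hence
\[
 V_{n+1}(p)=\min_i\min_{|v|=n}\{C_i+q_i(p)J_v(F_i(p))\}=\min_{|w|=n+1}J_w(p).
\]
At endpoint parameters where $q_i(p)=0$, one uses the stated convention that the continuation term is zero. In that case $\rho(p)A_i=0$, so the same identity holds.

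Before concluding, I would also settle the probabilistic meaning of $V_n$ claimed in the text, namely that history-dependent or randomised policies do not improve on open-loop words. On a horizon of $n$ searches, the only observation that matters is the non-detection event: once the target is detected, no further cost is incurred. So a deterministic history-dependent policy, restricted to the non-detection path, is a fixed word, and its cost is $J_w$ by Lemma~\ref{lem:word-cost}. A randomised policy is a mixture of deterministic ones, so its cost is a convex combination of the values $J_w$ and is therefore at least their minimum. This confirms that $\min_w J_w$ is the true finite-horizon optimum and that it coincides with the recursion.

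Finally, each $J_w$ is affine in $p$ by Lemma~\ref{lem:word-cost}, and the minimum of finitely many ($2^n$) affine functions is continuous, concave and piecewise affine. The main obstacle, though a mild one, is the compatibility identity $\rho(p)A_i=q_i(p)\rho(F_i(p))$ together with the handling of $q_i=0$. Everything else is routine interchange of minima, justified by $q_i\ge0$.
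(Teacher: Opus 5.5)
Your proof is correct, but your main argument takes a different route from the paper's. The paper never inducts on the recursion \eqref{eq:finite-bellman}. It reads $V_n$ as the optimal $n$-search cost and proves \eqref{eq:Vn-word} directly at the policy level:
\begin{itemize}
\item a deterministic history-dependent policy is a tuple of maps $\pi_k:\{1,2\}^{k-1}\to\{1,2\}$, which along the all-unsuccessful history generates a single word $w^\pi$ with cost $J_{w^\pi}$;
\item a randomised policy induces a distribution $(r_w)$ over words, so its cost is the convex combination $\sum_w r_wJ_w(p)$.
\end{itemize}
That is essentially your third paragraph, which plays only a supporting role in your write-up.

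Your primary argument is algebraic instead. The compatibility identity $\rho(p)A_i=q_i(p)\rho(F_i(p))$ gives the word-level Bellman identity $J_{iv}(p)=C_i+q_i(p)J_v(F_i(p))$, and you then interchange minima using $q_i\ge0$. This buys three things.
\begin{itemize}
\item It proves that the recursively defined $V_n$ equals $\min_wJ_w$ without appealing to the standard equivalence between the finite-horizon recursion and the optimal policy value. The paper simply asserts that equivalence when it introduces $V_n$.
\item It makes the branch formula $Q_{n+1,i}(p)=\min_{w_1=i}J_w(p)$, which the paper invokes in the proof of Theorem~\ref{thm:finite}, and the iterate \eqref{eq:verification-iterate} immediate consequences rather than implicit ones.
\item It handles the $q_i(p)=0$ convention explicitly.
\end{itemize}

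The paper's route, for its part, is what shows that history dependence and randomisation cannot help. That fact is needed when $V_n$ is compared with the infimum $V$ over all policies in Lemma~\ref{lem:truncation}. Since you include that argument as well, your proof covers both readings of $V_n$.
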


\begin{proof}
The case $n=0$ is immediate.  Fix $n\ge1$ and $p\in[0,1]$.  If the
process reaches epoch $k$, then the preceding $k-1$ searches were all
unsuccessful.  The nonterminal history is therefore determined by the
sequence of sites previously searched.  With
$\{1,2\}^0:=\{\varnothing\}$, a deterministic $n$-stage policy is a
tuple $\pi=(\pi_1,\ldots,\pi_n)$ of maps
\[
 \pi_k:\{1,2\}^{k-1}\longrightarrow\{1,2\},
 \qquad 1\le k\le n,
\]
where $\pi_k(i_1,\ldots,i_{k-1})$ is the site searched at epoch $k$
after sites $i_1,\ldots,i_{k-1}$ have been searched unsuccessfully.
This policy determines a word $w^\pi=w_1^\pi\cdots w_n^\pi$ recursively
by
\[
 w_1^\pi:=\pi_1(\varnothing),
 \qquad
 w_k^\pi:=\pi_k(w_1^\pi,\ldots,w_{k-1}^\pi),
 \quad 2\le k\le n.
\]
Lemma~\ref{lem:word-cost} gives its expected $n$-horizon cost as
$J_{w^\pi}(p)$.  Conversely, every $w\in\{1,2\}^n$ defines a feasible
open-loop policy.

For a randomised policy, proceed recursively along the history in which
all $n$ searches are unsuccessful, drawing each action according to the
distribution specified by the policy.  Let $r_w$ be the probability
that the resulting word is $w$.  Then
\[
 r_w\ge0,
 \qquad
 \sum_{w\in\{1,2\}^n}r_w=1.
\]
By conditioning on the resulting word and applying
Lemma~\ref{lem:word-cost}, the expected cost accumulated until detection
or until $n$ searches have been performed, whichever occurs first, is
the convex combination $\sum_w r_wJ_w(p)$.  Consequently,
\[
 \sum_{w\in\{1,2\}^n}r_wJ_w(p)
 \ge \min_{w\in\{1,2\}^n}J_w(p).
\]
Since every word is feasible, this proves \eqref{eq:Vn-word}.  Finally,
Lemma~\ref{lem:word-cost} shows that every $J_w$ is affine in $p$; the
minimum of the finite family $\{J_w:w\in\{1,2\}^n\}$ is therefore
continuous, piecewise affine and concave.
\end{proof}

A related representation of the value as the lower envelope of the costs of
fixed search sequences appears in Jordan~\cite[Section~3.3]{jordan1997}.
What will matter here is the matrix structure of the word costs.

\section{Height-one words and a common projective separator}
\label{sec:separator}

We now prove the algebraic-combinatorial core of the paper.  The section
is self-contained, but the following roadmap may help readers who do
not usually work with words or total positivity.

\begin{enumerate}[label=\textbf{Step \arabic*.},leftmargin=*]
\item A pair of words is called \emph{height-one} when the cumulative
excess number of $1$'s in the first word over the second is always $0$
or $1$.  Plotting this discrepancy against the prefix length gives a
path on a two-level ladder.
\item Such a pair can be converted from one word to the other
using local adjacent swaps $21\to12$ (and possibly one $2\to1$
replacement).  Crucially, the prefixes immediately to the left of these
operations are nested.
\item A positive $2\times2$ matrix with positive determinant maps the
positive cone to an ordered interval of projective slopes.  The interval
shrinks when the word prefix is extended.
\item Each local word operation changes the word cost at the two
endpoint beliefs $p=0$ and $p=1$.  A suitable slope in the projective
interval of its left prefix bounds the change at $p=1$ by a positive
multiple of the change at $p=0$.  Because the operation prefixes are
nested, one slope chosen from the deepest prefix gives such a bound for
every operation.  Summing the bounds rules out an upward crossing of the
total word-cost difference.
\end{enumerate}

The terminology of finite words is standard~\cite{lothaire1997}.  The
adjacent inversion swaps used here are closely related to the local
moves underlying weak orders on
permutations~\cite[Chapter~3]{bjornerbrenti2005};
however, the chain-of-prefixes refinement proved below is specific to
the height-one condition.

\subsection{Height-one pairs and their two-level path}

For a word $w$, let $N_1(w)$ denote the number of symbols equal to $1$.

\begin{definition}[Height-one pair]
\label{def:heightone}
Two words $H,L\in\{1,2\}^n$ form a height-one pair if
\begin{equation}
 \delta_k
 :=N_1(H_{1:k})-N_1(L_{1:k})
 \in\{0,1\},
 \qquad k=1,\ldots,n.
 \label{eq:heightone}
\end{equation}
\end{definition}

It is useful to plot $k\mapsto\delta_k$, with $\delta_0:=0$.
The height-one condition means that this path never leaves the two
levels $0$ and $1$.  At level $0$, a pair of unequal next letters must
be $(1,2)$ and moves the discrepancy to $1$; at level $1$, unequal next
letters must be $(2,1)$ and return it to $0$.  Common letters leave the
discrepancy unchanged.  Thus a height-one pair can be viewed as a
sequence of excursions between the two levels.

\begin{definition}[Elementary word operations]
Suppose a word has the form $u21v$.  Replacing the adjacent factor
$21$ by $12$ is an \emph{adjacent move}, written
$u21v\to u12v$, and the word $u$ is its \emph{left prefix}.  If a
word has the form $u2v$, replacing that displayed $2$ by $1$ is a
\emph{replacement} $u2v\to u1v$, again occurring at left prefix $u$.
\end{definition}

\begin{example}[The two-level picture]
Let
\[
 H=1212,\qquad L=2121.
\]
Then
\[
\begin{array}{c|ccccc}
 k&0&1&2&3&4\\ \hline
 N_1(H_{1:k})-N_1(L_{1:k})&0&1&0&1&0
\end{array}
\]
so $(H,L)$ is height-one.  Moreover
\[
 2121\longrightarrow1221\longrightarrow1212,
\]
where both arrows are $21\to12$ adjacent moves.  Their left prefixes are
$\varnothing$ and $12$, which form a prefix chain
$\varnothing\pref12$.  The following lemma shows that every height-one
pair admits a decomposition with this nested-prefix feature, even when
many such excursions are present.
\end{example}

\begin{lemma}[Chain decomposition]
\label{lem:chain}
Let $H,L\in\{1,2\}^n$ be a height-one pair.  Then $L$ can be
transformed into $H$ by a finite sequence of adjacent moves
\begin{equation}
 21\longrightarrow12
 \label{eq:bubble}
\end{equation}
and, if $\delta_n=1$, exactly one replacement
\begin{equation}
 2\longrightarrow1.
 \label{eq:replacement}
\end{equation}
The left prefixes at which all operations occur form a chain under
$\pref$.  If the replacement \eqref{eq:replacement} occurs, it occurs
at a deepest prefix in that chain.
\end{lemma}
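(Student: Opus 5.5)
The plan is to decompose the height-one pair into its excursions, transform each excursion by $21\to12$ moves that all occur at prefixes of one fixed word, and process the excursions from right to left so that the prefixes used for different excursions nest automatically.

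\textbf{Excursions.} Mark the indices $i$ with $\delta_{i-1}=0,\ \delta_i=1$ (so $H_i=1$, $L_i=2$) and the indices $j$ with $\delta_{j-1}=1,\ \delta_j=0$ (so $H_j=2$, $L_j=1$). By the remark after Definition~\ref{def:heightone}, these indices alternate, and $H_k=L_k$ at every other index. So
\[
 L=s_0\,(2x^{(1)}1)\,s_1\,(2x^{(2)}1)\cdots,\qquad H=s_0\,(1x^{(1)}2)\,s_1\,(1x^{(2)}2)\cdots,
\]
with common words $s_r$ and $x^{(r)}$. If $\delta_n=1$, the last block is an unfinished excursion: it is $2x$ in $L$ and $1x$ in $H$, running to the end of the word.

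\textbf{One closed excursion.} Suppose the current word is $u\,2x1\,v$ and we want $u\,1x2\,v$, with $u,v,x$ fixed. Let the $1$'s of the factor $2x1$ be $1^{(1)},\dots,1^{(m)}$ from left to right; the last one is the final letter. Transforming $2x1$ into $1x2$ moves each $1^{(r)}$ one $1$-slot to the left: $1^{(1)}$ goes to the first position of the factor, and $1^{(r)}$ goes to the old position of $1^{(r-1)}$'s right neighbour. The $2$'s in between are unaffected in pattern.

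\begin{itemize}
\item Realise this by bubbling $1^{(m)}$ leftwards through the block of $2$'s preceding it, then $1^{(m-1)}$, and so on down to $1^{(1)}$, which crosses the leading $2$ last.
\item Each elementary step swaps $21\to12$.
\item While $1^{(r)}$ is being moved, everything to its left is still the untouched prefix of $u2x$. So every left prefix used is a prefix of the original word $u2x$.
\item These prefixes shrink monotonically as the process proceeds, ending with $u$ itself. Being prefixes of a single word, they form a chain.
\end{itemize}

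A direct check on positions confirms that the result is $u1x2v$.

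\textbf{Ordering across excursions.} Process the excursions from right to left. When an excursion starting at index $i$ is treated, everything to its left is still in $L$-form. So its left prefixes all extend $L_{1:i-1}$, and they are prefixes of the word $L_{1:j-1}$ (the current word at that stage, truncated before the closing $1$).

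Any excursion further left ends at some $j'<i$. It is treated later, and its prefixes are prefixes of $L_{1:j'-1}$, hence prefixes of $L_{1:i-1}$. Thus every prefix of a left excursion is a prefix of every prefix of a right one, and the union over all excursions is a chain.

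\textbf{Unfinished final excursion ($\delta_n=1$).} Write the tail as $2x$ with $x=z\,2\,1^t$ if $x$ contains a $2$. Otherwise the last $2$ of the tail is the displayed leading one.

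\begin{itemize}
\item First perform the replacement $2\to1$ on the last $2$ of the tail. This turns $2z21^t$ into $(2z1)1^t$, while the target is $(1z2)1^t$.
\item The replacement happens at left prefix $u2z$ (or at $u$ in the degenerate case). Then apply the closed-excursion procedure to $2z1$.
\item That procedure's prefixes are prefixes of $u2z$, so the replacement prefix is a deepest element of this sub-chain.
\item Since this rightmost block is processed first, all other excursions contribute shorter prefixes. The replacement prefix is therefore deepest in the whole chain.
\end{itemize}

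Exactly one replacement occurs, as the counts require: $N_1(H)-N_1(L)=\delta_n$.

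\textbf{Main obstacle.} The delicate point is the ordering. Processing excursions left to right, or bubbling the $2$ rightwards, produces left prefixes that mix $H$-letters and $L$-letters at the same position and are not comparable. The key observation I would verify carefully is that right-to-left bubbling of the $1$'s keeps every prefix inside the untouched $L$-part, which is what makes the prefixes nest.
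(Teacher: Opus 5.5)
Your proposal has a genuine gap. The closed-excursion step fails as soon as $x$ contains a $1$, and the invariant you rely on (every left prefix is a prefix of the untouched $L$-part $u2x$) cannot hold in general.

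Since $x$ occupies the same positions in $2x1$ and $1x2$, the token $1^{(r)}$ must end exactly where $1^{(r-1)}$ started, not at its right neighbour. That cell is vacated only after $1^{(r-1)}$ has moved, so bubbling from right to left stops one cell short. For $L=2121$, $H=1122$ (one excursion, $x=12$) your procedure gives $2121\to2112\to1212\neq1122$.

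No other order inside the excursion rescues the invariant either. The first letter can change only through a move at $\varnothing$. That move writes a $2$ into position~$2$, and turning it into the required $1$ needs a later move at left prefix $1$, which is not a prefix of $L$. Here the only sequence whose prefixes form a chain is $2121\to1221\to1212\to1122$, with prefixes $\varnothing,12,1$. Your treatment of the unfinished final excursion inherits the same defect when $z$ contains a $1$.

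The excursion-by-excursion ordering also breaks down. Take $L=212121$, $H=112212$, whose excursions are $2\,12\,1$ and $21$, and complete each excursion before starting the next.
\begin{itemize}
\item Treating the right excursion first places its move at prefix $2121$, which is incomparable with the unavoidable prefix $1$ of the left excursion.
\item Completing the left excursion first places the right move at $1122$, which is incomparable with $12$.
\end{itemize}
A chain is obtained by interleaving the two excursions: $212121\to122121\to121221\to121212\to112212$, with prefixes $\varnothing,12,1212,1$. So your diagnosis in the ``main obstacle'' paragraph is inverted. Valid chains typically contain prefixes such as $1212$ that are neither in $L$-form nor in $H$-form.

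The missing idea is the paper's induction on the first two letters of $(H,L)$, which produces this interleaving automatically.
\begin{itemize}
\item In the cases $(11,21)$ and $(12,21)$, the move at $\varnothing$ is performed first and everything else happens after a common first letter.
\item In the case $(12,22)$, the recursion after the common letter $2$ is performed first and the move at $\varnothing$ last.
\end{itemize}
Unwinding the recursion, an excursion $2x1$ at left context $u$ uses exactly the prefixes $u\,x_{1:r}$, $0\le r\le|x|$. These are prefixes of the common word $ux$, not of $u2x$. All later excursions are handled at prefixes extending $u\,x\,12$. They come after the moves at $u\,x_{1:r}$ with $x_{r+1}=1$ and before those with $x_{r+1}=2$.

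Your scheme is correct only when every middle word $x$ (and $z$) contains no $1$.
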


\begin{proof}
We prove by induction on $n$ that every height-one pair of words of
length $n$ admits a transformation satisfying all the conclusions of
the lemma.  For $n=0$, the empty sequence of operations has the required
properties.

Suppose $n\ge1$ and the claim holds for shorter pairs.  If
$H_1=L_1=s$, write $H=sH'$ and $L=sL'$.  The pair $(H',L')$ is
height-one because deleting the common first symbol does not change any
prefix discrepancy.  Apply the induction hypothesis to transform $L'$
into $H'$, and perform the same operations after the common initial
symbol $s$.  An operation with left prefix $u$ in the shorter pair then
has left prefix $su$ in the original pair.  Thus
$u\pref v$ if and only if $su\pref sv$, so the prefixes still form a
chain.  Moreover, prepending $s$ increases every prefix length by one;
hence a replacement prefix of maximal length remains of maximal length.

It remains to consider $H_1\ne L_1$.  The height-one condition forces
$H_1=1$ and $L_1=2$.  For $n=1$ the unique operation is the replacement
$2\to1$ at the empty prefix.  For $n\ge2$, the pair of second symbols
cannot be $(1,2)$, since that would give $\delta_2=2$.  Thus there are
three cases.

\begin{enumerate}[label=(\alph*),leftmargin=*]
\item If
\[
 H=11H',\qquad L=21L',
\]
first apply the adjacent move $21L'\to12L'$ to the first two symbols.
Its left prefix is $\varnothing$.  It remains to transform $2L'$ into
$1H'$ after the common initial symbol $1$.  The pair $(1H',2L')$ is
height-one: its first prefix discrepancy is $1$, and its discrepancy at
prefix length $j\ge2$ equals $\delta_{j+1}$ for the original pair.
By induction, its operation prefixes form a chain.  In the original
words these prefixes have the form $1u$, and
$\varnothing\pref1u$ for every such $u$; hence all operation prefixes
form a chain.  If the recursive transformation contains a replacement,
its prefix has maximal length after the initial $1$ is prepended and is
therefore also maximal in the full chain.

\item If
\[
 H=12H',\qquad L=21L',
\]
first apply the adjacent move $21L'\to12L'$ to the first two symbols;
again its left prefix is $\varnothing$.  Since the discrepancy returns
to $0$ after these two symbols, $(H',L')$ is height-one.  Apply induction
to this pair after the common prefix $12$.  The corresponding operation
prefixes are $12u$, so they form a chain together with
$\varnothing$.  If a replacement occurs, its prefix remains of maximal
length after $12$ is prepended.

\item If
\[
 H=12H',\qquad L=22L',
\]
the pair $(1H',2L')$ is height-one: its first prefix discrepancy is $1$,
and its discrepancy at prefix length $j\ge2$ equals $\delta_{j+1}$ for
the original pair.  Apply induction to this pair after the common
initial symbol $2$, thereby transforming $22L'$ into $21H'$.  Its
operation prefixes become words of the form $2u$.  Finally apply the
adjacent move $21H'\to12H'$ to the first two symbols; this last move has
left prefix $\varnothing$.  Thus the full collection of prefixes is a
chain.  If the recursive transformation contains a replacement, its
prefix remains of maximal length after the initial $2$ is prepended.
\end{enumerate}

The induction and the three cases therefore produce a transformation
from $L$ to $H$ using only the allowed operations, with all left prefixes
forming a chain and any replacement occurring at a deepest prefix.

Let $N_{\mathrm{rep}}$ be the number of replacements in the resulting
transformation.
Each adjacent move preserves the number of $1$'s because both $21$ and
$12$ contain one symbol $1$, whereas each replacement $2\to1$ increases
that number by one.  Since the transformation begins at $L$ and ends at
$H$,
\[
 N_{\mathrm{rep}}=N_1(H)-N_1(L)=\delta_n.
\]
The height-one condition gives $\delta_n\in\{0,1\}$, so there is exactly
one replacement when $\delta_n=1$ and none when $\delta_n=0$.  This
completes the induction.
\end{proof}

\subsection{Positive \texorpdfstring{$2\times2$}{2 x 2} matrices in projective coordinates}

Throughout the rest of this section, assume the conditions in
\eqref{eq:positive-strict}.  Then the matrices $A_1$ and $A_2$ defined
in \eqref{eq:A1}--\eqref{eq:A2} are entrywise positive and
\begin{equation}
 \det A_i=\alpha_i\Delta>0.
 \label{eq:detAi}
\end{equation}
The following definition introduces the only notions from total
positivity needed here.

\begin{definition}[Total positivity and projective slope]
A $2\times2$ matrix is \emph{strictly totally positive} if all of its
minors are positive.  Writing
\[
 B=\begin{pmatrix}b_{11}&b_{12}\\ b_{21}&b_{22}\end{pmatrix},
\]
this means simply
\[
 b_{11},b_{12},b_{21},b_{22}>0,
 \qquad \det B>0.
\]
For a positive column vector $v=(v_1,v_2)^\top$, define its projective
slope by
\[
 \operatorname{sl}(v):=\frac{v_1}{v_2}.
\]
If $b^{(1)},b^{(2)}$ are the two columns of a strictly totally positive
$B$, then
\[
 \operatorname{sl}(b^{(2)})=\frac{b_{12}}{b_{22}}
 <\frac{b_{11}}{b_{21}}=\operatorname{sl}(b^{(1)}).
\]
We call the interval between these two slopes the \emph{projective
interval} of $B$:
\begin{equation}
 \cI(B):=
 \left[\frac{b_{12}}{b_{22}},\frac{b_{11}}{b_{21}}\right].
 \label{eq:IB}
\end{equation}
For the identity matrix, which is not entrywise positive, we use the
convenient convention
\begin{equation}
 \cI(I):=[0,\infty].
 \label{eq:Iempty}
\end{equation}
\end{definition}

The terminology is projective because multiplying $v$ by a positive
scalar leaves $\operatorname{sl}(v)$ unchanged: the slope records the
ray through $v$, not its magnitude.  No further projective geometry is
needed below.
For background on total positivity and its variation-diminishing
properties, see~\cite{pinkus2010}; for characterisations in terms of
sign non-reversal, see~\cite{choudhury2021}.  We need only
the elementary $2\times2$ geometry just defined.  By
\eqref{eq:detAi}, every nonempty product $A_u$ is strictly
totally positive.

\begin{lemma}[Nested projective intervals]
\label{lem:nesting}
If $u\pref u'$, then
\begin{equation}
 \cI(A_{u'})\subseteq\cI(A_u),
 \label{eq:nesting}
\end{equation}
with the convention \eqref{eq:Iempty}.
\end{lemma}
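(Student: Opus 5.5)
By the concatenation identity \eqref{eq:concat}, if $u\pref u'$ then $u'=uv$ for some word $v$ and $A_{u'}=A_uA_v$. It therefore suffices to prove a single fact: for any matrix $B$ that is either $I$ or strictly totally positive, and any strictly totally positive (or identity) $C$, we have $\cI(BC)\subseteq\cI(B)$. The case $u=\varnothing$ is immediate from the convention \eqref{eq:Iempty}, since every projective interval of a positive matrix is a subinterval of $[0,\infty]$. The case $v=\varnothing$ is trivial. So I may assume both $B=A_u$ and $C=A_v$ are nonempty products, hence strictly totally positive by \eqref{eq:detAi}.

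The key observation is that each column of $BC$ is a positive combination of the columns of $B$. Writing $b^{(1)},b^{(2)}$ for the columns of $B$ and $C=(c_{ij})$, the $j$th column of $BC$ is $c_{1j}b^{(1)}+c_{2j}b^{(2)}$ with $c_{1j},c_{2j}>0$.

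I will then prove a mediant inequality. If $x,y$ are positive vectors with $\operatorname{sl}(y)<\operatorname{sl}(x)$ and $\lambda,\mu>0$, then
\[
\operatorname{sl}(y)<\operatorname{sl}(\lambda x+\mu y)<\operatorname{sl}(x).
\]
This holds because $(\lambda x_1+\mu y_1)/(\lambda x_2+\mu y_2)$ is a weighted mediant of $x_1/x_2$ and $y_1/y_2$, with weights $\lambda x_2$ and $\mu y_2$. Consequently both column slopes of $BC$ lie in $[b_{12}/b_{22},\,b_{11}/b_{21}]=\cI(B)$.

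Since $BC$ is itself strictly totally positive (its determinant is $\det B\det C>0$), $\cI(BC)$ is the closed interval whose endpoints are exactly these two column slopes. As both endpoints lie in the interval $\cI(B)$, the whole of $\cI(BC)$ is contained in $\cI(B)$. This proves \eqref{eq:nesting}.

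There is no genuine obstacle here. The only points needing care are the degenerate empty-prefix conventions, and checking that a product of the $A_i$ stays strictly totally positive, which follows from the positivity of the entries and the multiplicativity of the determinant.
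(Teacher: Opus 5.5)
Your proposal is correct and follows essentially the same route as the paper: write $u'=uz$, express each column of $A_uA_z$ as a positive combination of the columns of $A_u$ so that its slope is a weighted average of the two column slopes of $A_u$, and use $\det A_{u'}=\det A_u\det A_z>0$ to identify $\cI(A_{u'})$ as the interval between these two slopes. Both arguments also dispose of the cases $u=\varnothing$ and $z=\varnothing$ in the same way, using the convention $\cI(I)=[0,\infty]$.
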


\begin{proof}
If $u=\varnothing$, then \eqref{eq:nesting} follows from
$\cI(I)=[0,\infty]$.  Otherwise write $u'=uz$.  The conclusion is an
equality if $z=\varnothing$, so suppose $z\ne\varnothing$.  Write
\[
 A_u=\bigl(a^{(1)}\;a^{(2)}\bigr),
 \qquad
 a^{(1)}=\binom{a_{11}}{a_{21}},
 \qquad
 a^{(2)}=\binom{a_{12}}{a_{22}},
\]
and set
\[
 s_1:=\frac{a_{11}}{a_{21}},
 \qquad
 s_2:=\frac{a_{12}}{a_{22}}.
\]
Since $A_u$ is strictly totally positive, $s_2<s_1$ and
$\cI(A_u)=[s_2,s_1]$.

Let $A_z=(z_{ij})$.  This matrix is entrywise positive, and the $j$th
column of $A_{u'}=A_uA_z$ is
\[
 a'^{(j)}=z_{1j}a^{(1)}+z_{2j}a^{(2)},
 \qquad j=1,2.
\]
Consequently
\[
 \operatorname{sl}(a'^{(j)})
 =\frac{z_{1j}a_{21}s_1+z_{2j}a_{22}s_2}
        {z_{1j}a_{21}+z_{2j}a_{22}}
 \in[s_2,s_1].
\]
Thus the slopes of both columns of $A_{u'}$ belong to $\cI(A_u)$.
Moreover,
\[
 \det A_{u'}=\det A_u\det A_z>0,
\]
so the second-column slope of $A_{u'}$ is smaller than its first-column
slope.  Hence the interval
between these slopes is contained in $\cI(A_u)$, which is precisely
\eqref{eq:nesting}.
\end{proof}

\subsection{Local swaps and projective separators}

Before computing the local swap, we name the certificate that will be
used repeatedly.

\begin{definition}[Projective separator]
For a $2\times2$ matrix $X$ and $t>0$, let $\xi_t=(1,-t)$ be a row
vector.  We say
that $t$ \emph{separates} $X$ if
\[
 \xi_tX\le0
\]
componentwise.
\end{definition}

If $c>0$ and $d=Xc=(d_1,d_2)^\top$, separation implies
$d_1-td_2\le0$.  In particular,
\[
 d_2<0\quad\Longrightarrow\quad d_1<0.
\]
Thus a separator is exactly the kind of certificate that rules out the
endpoint sign pattern $d_2<0<d_1$ corresponding to an upward crossing
of an affine word-cost difference.
We next show that every adjacent move $21\to12$ has a whole interval of
such separators.

Set
\begin{equation}
 \beta_1:=1-\alpha_1,
 \qquad
 \beta_2:=1-\alpha_2.
 \label{eq:beta}
\end{equation}
These are the one-search detection probabilities at the two sites, and
both are strictly positive under \eqref{eq:positive-strict}.
Directly from \eqref{eq:Rw},
\begin{equation}
 K:=R_{12}-R_{21}
 =\begin{pmatrix}0&-\beta_1\\\beta_2&0\end{pmatrix}.
 \label{eq:K}
\end{equation}
The fixed sign pattern of $K$ is the local source of the single-crossing
property:
a row vector with signs $(+,-)$ becomes componentwise nonpositive after
multiplication by $K$.  The next lemma shows that the difference between
the two future update products factors through the same $K$, with an
entrywise positive matrix on the right.

\begin{lemma}[Commutator factorisation]
\label{lem:commutator}
One has
\begin{equation}
 A_1A_2-A_2A_1=K\Gamma,
 \label{eq:commutator}
\end{equation}
where
\begin{equation}
 \Gamma=(1-\alpha_1\alpha_2)
 \begin{pmatrix}
 \dfrac{a(1-b)}{\beta_2}&\dfrac{(1-a)(1-b)}{\beta_2}\\[1.2ex]
 \dfrac{(1-a)(1-b)}{\beta_1}&\dfrac{b(1-a)}{\beta_1}
 \end{pmatrix}.
 \label{eq:Gamma}
\end{equation}
In particular, $\Gamma$ is entrywise positive.
\end{lemma}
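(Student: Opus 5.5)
The plan is to verify \eqref{eq:commutator} by direct computation, using the special shape of $K$ to say exactly what must be checked. Since $K$ has zero diagonal, for any $2\times2$ matrix $\Gamma$ with rows $\gamma_1,\gamma_2$ the product $K\Gamma$ has first row $-\beta_1\gamma_2$ and second row $\beta_2\gamma_1$. Because $\beta_1,\beta_2>0$ under \eqref{eq:positive-strict}, the identity $A_1A_2-A_2A_1=K\Gamma$ is therefore equivalent to two row identities:
\[
 \gamma_2=-\tfrac1{\beta_1}\,(\text{row 1 of }[A_1,A_2]),
 \qquad
 \gamma_1=\tfrac1{\beta_2}\,(\text{row 2 of }[A_1,A_2]),
\]
where $[A_1,A_2]:=A_1A_2-A_2A_1$. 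So $\Gamma$ is uniquely determined, and it suffices to compute the four entries of the commutator from \eqref{eq:A1}--\eqref{eq:A2} and compare them with \eqref{eq:Gamma}.

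\textbf{First row.} For the $(1,1)$ entry,
\[
 (A_1A_2)_{11}=\alpha_1a^2+\alpha_1\alpha_2(1-a)(1-b),
 \qquad
 (A_2A_1)_{11}=\alpha_1a^2+(1-a)(1-b),
\]
so the difference is $-(1-\alpha_1\alpha_2)(1-a)(1-b)$. For the $(1,2)$ entry,
\[
 (A_1A_2)_{12}=\alpha_1a(1-a)+\alpha_1\alpha_2(1-a)b,
 \qquad
 (A_2A_1)_{12}=\alpha_1a(1-a)+(1-a)b,
\]
so the difference is $-(1-\alpha_1\alpha_2)(1-a)b$. Dividing this row by $-\beta_1$ gives exactly the second row of \eqref{eq:Gamma}.

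\textbf{Second row.} The analogous expansion gives $(2,1)$ entry $(1-\alpha_1\alpha_2)(1-b)a$. The $(2,2)$ entry equals minus the $(1,1)$ entry, because a commutator has trace zero; it is $(1-\alpha_1\alpha_2)(1-a)(1-b)$. Dividing this row by $\beta_2$ gives exactly the first row of \eqref{eq:Gamma}.

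\textbf{Positivity.} Under \eqref{eq:positive-strict} we have $0<a,b<1$, $\alpha_1\alpha_2<1$, and $\beta_1,\beta_2>0$, so every entry of $\Gamma$ is positive.

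There is no conceptual obstacle here. The only care needed is bookkeeping: the second row of $K$ multiplies the first row of $\Gamma$, so the row assignment and the sign in front of $\beta_1$ are easy to swap.
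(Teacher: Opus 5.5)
Your proposal is correct and follows essentially the same route as the paper: a direct entrywise computation of $A_1A_2-A_2A_1$ and comparison with $K\Gamma$. The only differences are cosmetic. You read $\Gamma$ off row by row using the zero diagonal of $K$, and you obtain the $(2,2)$ entry from the trace-zero property of a commutator rather than expanding it directly.
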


\begin{proof}
The factorisation follows by a direct entrywise calculation.  From
\eqref{eq:A1}--\eqref{eq:A2}, the four entries of the commutator are
\begin{align*}
 (A_1A_2-A_2A_1)_{11}
 &=\alpha_1\bigl[a^2+\alpha_2(1-a)(1-b)\bigr]
   -\bigl[\alpha_1a^2+(1-a)(1-b)\bigr]\\
 &=-(1-\alpha_1\alpha_2)(1-a)(1-b),\\
 (A_1A_2-A_2A_1)_{12}
 &=\alpha_1(1-a)(a+\alpha_2b)
   -(1-a)(\alpha_1a+b)\\
 &=-(1-\alpha_1\alpha_2)b(1-a),\\
 (A_1A_2-A_2A_1)_{21}
 &=(1-b)(a+\alpha_2b)
   -\alpha_2(1-b)(\alpha_1a+b)\\
 &=(1-\alpha_1\alpha_2)a(1-b),\\
 (A_1A_2-A_2A_1)_{22}
 &=(1-a)(1-b)+\alpha_2b^2
   -\alpha_2\bigl[\alpha_1(1-a)(1-b)+b^2\bigr]\\
 &=(1-\alpha_1\alpha_2)(1-a)(1-b).
\end{align*}
Consequently
\begin{equation}
 A_1A_2-A_2A_1
 =(1-\alpha_1\alpha_2)
 \begin{pmatrix}
 -(1-a)(1-b)&-b(1-a)\\
 a(1-b)&(1-a)(1-b)
 \end{pmatrix}.
 \label{eq:commutator-expanded}
\end{equation}
On the other hand, using \eqref{eq:beta} and \eqref{eq:Gamma},
\[
 K\Gamma
 =(1-\alpha_1\alpha_2)
 \begin{pmatrix}
 0&-\beta_1\\ \beta_2&0
 \end{pmatrix}
 \begin{pmatrix}
 \dfrac{a(1-b)}{\beta_2}&\dfrac{(1-a)(1-b)}{\beta_2}\\[1.2ex]
 \dfrac{(1-a)(1-b)}{\beta_1}&\dfrac{b(1-a)}{\beta_1}
 \end{pmatrix},
\]
which is exactly the matrix in \eqref{eq:commutator-expanded}.  This proves
\eqref{eq:commutator}.  Every entry of $\Gamma$ is positive because
$0<a,b<1$ and $0<\alpha_1,\alpha_2<1$ by
\eqref{eq:positive-strict}.
\end{proof}

\begin{lemma}[Adjacent-move separator]
\label{lem:bubble-separator}
For any words $u,v$,
\begin{equation}
 R_{u12v}-R_{u21v}
 =A_uK(I+\Gamma R_v).
 \label{eq:bubble-factor}
\end{equation}
If $t\in\cI(A_u)\cap(0,\infty)$ and $\xi_t:=(1,-t)$, then
\begin{equation}
 \xi_t(R_{u12v}-R_{u21v})\le0
 \label{eq:bubble-halfspace}
\end{equation}
componentwise.
\end{lemma}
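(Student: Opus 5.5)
We first establish the factorisation \eqref{eq:bubble-factor} from the concatenation identities \eqref{eq:concat}, and then read off the sign of $\xi_t(R_{u12v}-R_{u21v})$ from the fixed sign patterns of $K$ and $\xi_tA_u$.

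\textbf{Step 1: the factorisation.} Applying \eqref{eq:concat} twice gives
\[
 R_{u12v}=R_u+A_u\bigl(R_{12}+A_{12}R_v\bigr),
 \qquad
 R_{u21v}=R_u+A_u\bigl(R_{21}+A_{21}R_v\bigr).
\]
Subtracting, the common term $R_u$ cancels, and
\[
 R_{u12v}-R_{u21v}
 =A_u\bigl[(R_{12}-R_{21})+(A_1A_2-A_2A_1)R_v\bigr].
\]
By \eqref{eq:K} the first bracketed term is $K$. By Lemma~\ref{lem:commutator} the second is $K\Gamma R_v$. Factoring out $K$ on the left yields $A_uK(I+\Gamma R_v)$, which is \eqref{eq:bubble-factor}.

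\textbf{Step 2: the sign of $\xi_tA_u$.} If $u=\varnothing$, then $\xi_tA_u=(1,-t)$, which has sign pattern $(+,-)$ because $t>0$. If $u\ne\varnothing$, write $A_u=(a_{ij})$; it is strictly totally positive by \eqref{eq:detAi}. Then
\[
 \xi_tA_u=\bigl(a_{11}-ta_{21},\;a_{12}-ta_{22}\bigr).
\]
The condition $t\in\cI(A_u)=[a_{12}/a_{22},\,a_{11}/a_{21}]$ is equivalent to $a_{11}-ta_{21}\ge0$ and $a_{12}-ta_{22}\le0$, since $a_{21},a_{22}>0$. So in every case $\xi_tA_u=(x,-y)$ with $x,y\ge0$.

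\textbf{Step 3: multiplying by $K$.} A direct computation with \eqref{eq:K} gives
\[
 (x,-y)K=(-y\beta_2,\,-x\beta_1)\le0
\]
componentwise. This uses only $\beta_1,\beta_2>0$, which holds under \eqref{eq:positive-strict}.

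\textbf{Step 4: the right factor.} The matrix $I+\Gamma R_v$ is entrywise nonnegative. Indeed, $\Gamma$ is entrywise positive by Lemma~\ref{lem:commutator}. By \eqref{eq:Rw}, $R_v$ is a sum of products of the nonnegative matrices $A_i$ and $E_i$, so it is entrywise nonnegative. A nonpositive row vector times a nonnegative matrix is nonpositive, which gives \eqref{eq:bubble-halfspace}.

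There is no real obstacle here. The only point needing care is the endpoint convention: for $u=\varnothing$ the interval $\cI(I)=[0,\infty]$ has to be intersected with $(0,\infty)$, so that $\xi_t$ is well defined and has the $(+,-)$ sign pattern.
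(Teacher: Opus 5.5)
Your proposal is correct and follows the paper's proof essentially step for step. The concatenation identities, together with $K=R_{12}-R_{21}$ and the commutator factorisation, give \eqref{eq:bubble-factor}. The $(+,-)$ sign pattern of $\xi_tA_u$ (with $u=\varnothing$ handled separately) then makes $\xi_tA_uK\le0$, and this inequality survives right multiplication by the nonnegative matrix $I+\Gamma R_v$.
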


\begin{proof}
Using \eqref{eq:concat},
\begin{align*}
 R_{u12v}-R_{u21v}
 &=A_u\bigl(R_{12}-R_{21}
 +(A_1A_2-A_2A_1)R_v\bigr)\\
 &=A_uK(I+\Gamma R_v),
\end{align*}
which proves \eqref{eq:bubble-factor}.

First suppose $u\ne\varnothing$ and write
$A_u=\bigl(\begin{smallmatrix}a_{11}&a_{12}\\a_{21}&a_{22}\end{smallmatrix}\bigr)$.  For
$t\in\cI(A_u)$,
\[
 \xi_tA_u=(a_{11}-ta_{21},\;a_{12}-ta_{22})
 \in[0,\infty)\times(-\infty,0].
\]
Consequently
\begin{equation}
 \xi_tA_uK
 =\bigl(\beta_2(a_{12}-ta_{22}),-\beta_1(a_{11}-ta_{21})\bigr)\le0.
 \label{eq:xiAK}
\end{equation}
Since $I+\Gamma R_v$ is nonnegative, right multiplication preserves the
componentwise inequality.  If $u=\varnothing$, then for every $t>0$
\[
 \xi_tK=(-t\beta_2,-\beta_1)\le0,
\]
so the same conclusion holds under convention \eqref{eq:Iempty}.
\end{proof}

\subsection{The unmatched replacement}

A height-one pair with $\delta_n=1$ requires one $2\to1$ replacement.
It also admits a separator compatible with the projective interval.
For a positive matrix $B$, define the projective slope of its row-sum
vector by
\begin{equation}
 \sigma(B):=\frac{(B\1)_1}{(B\1)_2}.
 \label{eq:sigma}
\end{equation}
We also set $\sigma(I)=1$.

\begin{lemma}[Replacement separator]
\label{lem:replacement-separator}
For every nonempty positive $B$ with positive determinant,
\begin{equation}
 \sigma(B)\in\operatorname{int}\cI(B).
 \label{eq:sigma-in-I}
\end{equation}
Moreover, for any words $u,v$,
\begin{equation}
 (1,-\sigma(A_u))(R_{u1v}-R_{u2v})\le0
 \label{eq:replacement-halfspace}
\end{equation}
componentwise, with the same statement valid for $u=\varnothing$.
\end{lemma}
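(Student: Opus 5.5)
The plan is to treat the two assertions separately. Both reduce to short $2\times2$ computations once the difference $R_{u1v}-R_{u2v}$ is factored through $A_u$, in the same way as in Lemma~\ref{lem:bubble-separator}.

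\textbf{Step 1: the slope $\sigma(B)$ lies in $\operatorname{int}\cI(B)$.} Write $B=(b_{ij})$. Then
\[
 \sigma(B)=\frac{b_{11}+b_{12}}{b_{21}+b_{22}},
\]
which is the mediant of the two column slopes $b_{11}/b_{21}$ and $b_{12}/b_{22}$. Since all entries are positive, the mediant lies between the two fractions. It lies strictly between them whenever they differ, and they do differ because $\det B>0$. So $\sigma(B)$ is in the open interval, which proves \eqref{eq:sigma-in-I}.

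\textbf{Step 2: factor the replacement difference.} The concatenation identities \eqref{eq:concat} and the one-letter costs $R_i=E_i$ give
\[
 R_{u1v}-R_{u2v}=A_u\bigl(E_1-E_2+(A_1-A_2)R_v\bigr).
\]
From \eqref{eq:A1}--\eqref{eq:A2} and \eqref{eq:beta} we get
\[
 A_1-A_2=\diag(\alpha_1-1,\,1-\alpha_2)M=\diag(-\beta_1,\beta_2)M.
\]
We also have $E_1-E_2=\bigl(\begin{smallmatrix}1&-1\\1&-1\end{smallmatrix}\bigr)$.

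\textbf{Step 3: apply $\xi:=(1,-\sigma(A_u))$ to $A_u$.} The key observation is that $\sigma$ is chosen exactly so that $\xi A_u\1=(A_u\1)_1-\sigma(A_u)(A_u\1)_2=0$. Hence $\xi A_u=x(1,-1)$ for some scalar $x$. By Step~1, $\sigma(A_u)\le a_{11}/a_{21}$, so the first component $x=a_{11}-\sigma a_{21}$ is nonnegative. For $u=\varnothing$ we use the convention $\sigma(I)=1$, which gives $\xi=(1,-1)$ directly, i.e.\ $x=1$.

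\textbf{Step 4: evaluate the remaining factor.} It remains to compute $(1,-1)$ times the bracket from Step~2. First, $(1,-1)(E_1-E_2)=(0,0)$. Second, $(1,-1)\diag(-\beta_1,\beta_2)=(-\beta_1,-\beta_2)\le0$. Right multiplication by the nonnegative matrix $MR_v$ preserves this sign. Multiplying by $x\ge0$ then yields \eqref{eq:replacement-halfspace} componentwise.

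The only potentially delicate point is Step~3: one must see that the row-sum slope annihilates $A_u\1$, and that this leaves a multiple of $(1,-1)$ which kills the constant cost difference $E_1-E_2$. The remaining steps are routine sign bookkeeping.
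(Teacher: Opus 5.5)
Your proposal is correct and follows the paper's proof essentially step for step. Both use the positive column-sum (mediant) argument for $\sigma(B)\in\operatorname{int}\cI(B)$, the factorisation $A_u\bigl(E_1-E_2+\diag(-\beta_1,\beta_2)MR_v\bigr)$, and the reduction $\xi A_u=\kappa(1,-1)$ from $\xi A_u\1=0$. The only difference is that you record $\kappa\ge0$ where the paper notes $\kappa>0$; nonnegativity is all the inequality needs.
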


\begin{proof}
The two positive columns of $B$ have distinct projective slopes
because $\det B>0$.  Since $B\1$ is their positive sum, its
top-to-bottom ratio lies strictly between those two slopes.  This
proves \eqref{eq:sigma-in-I}.

From \eqref{eq:concat},
\begin{equation}
 R_{u1v}-R_{u2v}
 =A_u\bigl(E_1-E_2+(A_1-A_2)R_v\bigr).
 \label{eq:replacement-factor}
\end{equation}
Also
\begin{equation}
 A_1-A_2
 =\begin{pmatrix}-\beta_1&0\\0&\beta_2\end{pmatrix}M.
 \label{eq:A-diff}
\end{equation}
Let $\xi=(1,-\sigma(A_u))$.  By definition,
$\xi A_u\1=0$.  If $u\ne\varnothing$, relation
\eqref{eq:sigma-in-I} shows that the two components of $\xi A_u$ have
signs $+,-$; since their sum is zero,
\begin{equation}
 \xi A_u=\kappa(1,-1)
 \label{eq:kappa}
\end{equation}
for some $\kappa>0$.  The same identity holds with $\kappa=1$ when
$u=\varnothing$.  Now
\[
 (1,-1)(E_1-E_2)=0
\]
and, using \eqref{eq:A-diff},
\[
 (1,-1)(A_1-A_2)=-(\beta_1,\beta_2)M\le0.
\]
Since $R_v\ge0$, substitution into \eqref{eq:replacement-factor}
gives \eqref{eq:replacement-halfspace}.
\end{proof}

We can now aggregate arbitrarily many adjacent moves without ever needing
their individual cost coordinates to have fixed signs.  This is where
the two earlier ingredients meet: Lemma~\ref{lem:chain} makes the
operation prefixes a chain, while Lemma~\ref{lem:nesting} makes their
projective intervals a nested family.  The deepest operation therefore
supplies a separator that is valid for all earlier operations.

\begin{theorem}[Common projective separator]
\label{thm:common-separator}
Let $H,L\in\{1,2\}^n$ be a height-one pair.  Then there exists
$t_*>0$ such that
\begin{equation}
 (1,-t_*)(R_H-R_L)\le0
 \label{eq:common-separator}
\end{equation}
componentwise.
\end{theorem}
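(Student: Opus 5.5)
We will apply Lemma~\ref{lem:chain} to the height-one pair $(H,L)$. This produces a sequence of words $L=W_0\to W_1\to\cdots\to W_m=H$, where each step is either an adjacent move $u_j21v_j\to u_j12v_j$ or (at most once) a replacement $u_j2v_j\to u_j1v_j$. The left prefixes $u_1,\dots,u_m$ form a chain under $\pref$. Telescoping gives
\[
 R_H-R_L=\sum_{j=1}^m\bigl(R_{W_j}-R_{W_{j-1}}\bigr),
\]
so it suffices to find one $t_*>0$ with $(1,-t_*)(R_{W_j}-R_{W_{j-1}})\le0$ for every $j$. If $m=0$, then $H=L$ and any $t_*>0$ works, say $t_*=1$.

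Let $u_*$ be a deepest prefix in the chain. Because the prefixes are totally ordered, every $u_j\pref u_*$, so Lemma~\ref{lem:nesting} gives $\cI(A_{u_*})\subseteq\cI(A_{u_j})$ for all $j$. The choice of $t_*$ depends on whether a replacement occurs.
\begin{itemize}
\item If no replacement occurs, pick any $t_*\in\cI(A_{u_*})\cap(0,\infty)$. When $u_*\neq\varnothing$ this interval is a compact subinterval of $(0,\infty)$, since $A_{u_*}$ is entrywise positive. When $u_*=\varnothing$, the convention \eqref{eq:Iempty} lets us take $t_*=1$. In either case $t_*\in\cI(A_{u_j})\cap(0,\infty)$ for every $j$, and Lemma~\ref{lem:bubble-separator} gives the inequality \eqref{eq:bubble-halfspace} for each adjacent move.
\item If a replacement occurs, Lemma~\ref{lem:chain} places it at a deepest prefix, so we may take that prefix to be $u_*$. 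Set $t_*:=\sigma(A_{u_*})$, which is positive. Lemma~\ref{lem:replacement-separator} then gives $(1,-t_*)(R_{u_*1v}-R_{u_*2v})\le0$ for the replacement step. For the adjacent moves we need $t_*\in\cI(A_{u_j})$. When $u_*\ne\varnothing$, \eqref{eq:sigma-in-I} gives $\sigma(A_{u_*})\in\cI(A_{u_*})\subseteq\cI(A_{u_j})$. When $u_*=\varnothing$, all prefixes equal $\varnothing$, and $t_*=\sigma(I)=1\in[0,\infty]$.
\end{itemize}
Summing the componentwise inequalities over $j$ yields \eqref{eq:common-separator}.

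The main point requiring care is the compatibility between the replacement separator and the adjacent-move separators. Lemma~\ref{lem:replacement-separator} forces the specific value $t_*=\sigma(A_{u_*})$ rather than an arbitrary point of the interval. This works only because the replacement sits at the deepest prefix, so the forced value still lies in every earlier projective interval by nesting. A second point is that several operations may share the same deepest prefix, with different words $v$. This causes no difficulty, because neither Lemma~\ref{lem:bubble-separator} nor Lemma~\ref{lem:replacement-separator} depends on the suffix $v$.
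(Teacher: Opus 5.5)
Your proof is correct and follows the paper's argument: take a deepest operation prefix $u_*$ from Lemma~\ref{lem:chain}, use Lemma~\ref{lem:nesting} to place $t_*$ in every $\cI(A_u)$, invoke Lemmas~\ref{lem:bubble-separator} and~\ref{lem:replacement-separator}, and telescope. The only cosmetic difference is that the paper sets $t_*=\sigma(A_{u_*})$ in every case, whereas you allow any point of $\cI(A_{u_*})\cap(0,\infty)$ when no replacement occurs.
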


\begin{proof}
If $H=L$, then $R_H-R_L=0$ and the result is immediate, for example
with $t_*=1$.  Otherwise the chain decomposition from
Lemma~\ref{lem:chain} contains at least one operation.  Let $u_*$ be a
deepest operation prefix and put
\[
 t_*:=\sigma(A_{u_*}).
\]
For every adjacent move occurring at prefix $u$, chain comparability gives
$u\pref u_*$.  Lemma~\ref{lem:nesting} and
\eqref{eq:sigma-in-I} imply
\[
 t_*\in\cI(A_{u_*})\subseteq\cI(A_u).
\]
Hence Lemma~\ref{lem:bubble-separator} shows that $t_*$ separates the
matrix increment of every adjacent move.  If a replacement is present,
Lemma~\ref{lem:chain} places it at a deepest prefix, and
Lemma~\ref{lem:replacement-separator} gives the same separation inequality for
its matrix increment.
Telescoping the sequence of word transformations from $L$ to $H$ gives
\eqref{eq:common-separator}.
\end{proof}

\begin{corollary}[Height-one single crossing]
\label{cor:heightone-singlecross}
Let $H,L$ be a height-one pair and let
\begin{equation}
 d:=(R_H-R_L)c=\binom{d_1}{d_2}.
 \label{eq:d}
\end{equation}
Then
\begin{equation}
 d_2<0<d_1
 \label{eq:reverse-endpoint}
\end{equation}
is impossible.  Equivalently, the affine function
\begin{equation}
 J_H(p)-J_L(p)=pd_1+(1-p)d_2
 \label{eq:affine-diff}
\end{equation}
cannot cross zero from negative to positive as $p$ increases.
\end{corollary}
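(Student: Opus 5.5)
The plan is to apply Theorem~\ref{thm:common-separator} directly to the cost vector. That theorem supplies a $t_*>0$ with $(1,-t_*)(R_H-R_L)\le0$ componentwise. Since $c=(C_1,C_2)^\top$ has strictly positive entries, multiplying this row inequality on the right by $c$ preserves the sign, because a nonpositive row vector paired with a positive column vector gives a nonpositive scalar. This yields
\[
 d_1-t_*d_2=(1,-t_*)(R_H-R_L)c\le0,
\]
that is, $d_1\le t_*d_2$.

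Now suppose, for contradiction, that $d_2<0<d_1$. Because $t_*>0$ and $d_2<0$, we have $t_*d_2<0$. Hence $d_1\le t_*d_2<0$, which contradicts $d_1>0$. So the endpoint pattern \eqref{eq:reverse-endpoint} cannot occur.

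For the equivalent formulation, Lemma~\ref{lem:word-cost} gives $J_H(p)-J_L(p)=\rho(p)(R_H-R_L)c=pd_1+(1-p)d_2$, which is \eqref{eq:affine-diff}. This affine function takes the value $d_2$ at $p=0$ and $d_1$ at $p=1$. It crosses zero from negative to positive on $[0,1]$ exactly when it is nonconstant and increasing with a root in $(0,1)$, or more carefully, when it is negative at some $p$ and positive at some $p'>p$. By affinity, that happens if and only if $d_2<0<d_1$. Indeed, if $f(p)<0<f(p')$ with $p<p'$, the slope $d_1-d_2$ is positive, so $d_2=f(0)\le f(p)<0$ and $d_1=f(1)\ge f(p')>0$. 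The converse is immediate by taking $p=0$ and $p'=1$.

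There is no real obstacle here: all the work sits in Theorem~\ref{thm:common-separator}. The only point needing care is that $t_*$ is strictly positive and that $c$ is strictly positive, so that the scalar inequality inherits the right sign.
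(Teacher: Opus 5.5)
Your proof is correct and follows the paper's argument: you right-multiply the common-separator inequality of Theorem~\ref{thm:common-separator} by the positive cost vector $c$ to get $d_1\le t_*d_2$, which rules out $d_2<0<d_1$, and the equivalence with the affine formulation rests on the same endpoint observation the paper uses. Your explicit check that $f(p)<0<f(p')$ with $p<p'$ forces $d_2<0<d_1$ is a slightly more careful version of the paper's one-line remark about affine crossings.
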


\begin{proof}
Theorem~\ref{thm:common-separator} gives
$d_1-t_*d_2\le0$.  If $d_2<0$, then
$d_1\le t_*d_2<0$, ruling out \eqref{eq:reverse-endpoint}.  An affine
function has an upward interior zero crossing if and only if its values
at $0$ and $1$ have the signs in \eqref{eq:reverse-endpoint}.
\end{proof}

\section{Bellman trajectories and finite-horizon thresholds}
\label{sec:finite}

We now explain why the apparently special height-one condition is
forced by the Bellman recursion.  The key is that, in the positive-determinant
regime, the movement step preserves the order of beliefs and contracts
their distance in log odds.  Two branches that begin with opposite
searches can therefore alternate which one has the larger belief, but
they can never get more than one unmatched search ahead.

\subsection{Log-odds contraction}

Assume again the conditions in \eqref{eq:positive-strict}.
For $p\in(0,1)$ set
\begin{equation}
 \eta:=\log\frac{p}{1-p},
 \qquad
 \ell_i:=-\log\alpha_i>0,
 \qquad
 \Lambda:=\ell_1+\ell_2.
 \label{eq:logodds}
\end{equation}
After an unsuccessful search, before target movement, the log odds are
shifted by
\begin{equation}
 \eta\mapsto\eta-\ell_1
 \quad\text{under search 1},
 \qquad
 \eta\mapsto\eta+\ell_2
 \quad\text{under search 2}.
 \label{eq:search-shift}
\end{equation}
To see the effect of target movement in these coordinates, let $z$ be
the probability that the target is at site~1 immediately before
movement.  After applying $M$, the probabilities that it is at
sites~1 and~2 are
\[
 az+(1-b)(1-z)
 \quad\text{and}\quad
 (1-a)z+b(1-z),
\]
respectively.  If $z/(1-z)=e^\eta$, their new log-odds ratio is therefore
\begin{equation}
 \Phi(\eta)
 :=\log\frac{ae^\eta+1-b}{(1-a)e^\eta+b}.
 \label{eq:Phi}
\end{equation}
Thus $\Phi$ is simply the Markov movement step written in log-odds
coordinates.

\begin{lemma}[Strict log-odds contraction]
\label{lem:Phi}
If $0<a,b<1$ and $\Delta>0$, then
\begin{equation}
 0<\Phi'(\eta)<1
 \qquad\text{for every }\eta\in\R.
 \label{eq:Phi-derivative-bound}
\end{equation}
\end{lemma}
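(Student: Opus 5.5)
Differentiating \eqref{eq:Phi} directly settles this. Put $x:=e^\eta>0$, $N(x):=ax+1-b$ and $D(x):=(1-a)x+b$; both are strictly positive for $x>0$ because $0<a,b<1$. Since $dx/d\eta=x$,
\[
 \Phi'(\eta)=x\Bigl(\frac{a}{N(x)}-\frac{1-a}{D(x)}\Bigr)
 =\frac{x\bigl(aD(x)-(1-a)N(x)\bigr)}{N(x)D(x)}
 =\frac{\Delta\,x}{N(x)D(x)},
\]
because $a\bigl((1-a)x+b\bigr)-(1-a)\bigl(ax+1-b\bigr)=ab-(1-a)(1-b)=a+b-1=\Delta$. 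The lower bound $\Phi'>0$ then follows at once from $\Delta>0$ and the positivity of $x$, $N(x)$ and $D(x)$.

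For the upper bound I will show that $N(x)D(x)-\Delta x>0$ for every $x>0$. Expanding,
\[
 N(x)D(x)=a(1-a)x^2+\bigl(ab+(1-a)(1-b)\bigr)x+b(1-b),
\]
and $ab+(1-a)(1-b)-\Delta=ab+(1-a)(1-b)-ab+(1-a)(1-b)=2(1-a)(1-b)$. Hence
\[
 N(x)D(x)-\Delta x=a(1-a)x^2+2(1-a)(1-b)x+b(1-b),
\]
whose three coefficients are all strictly positive when $0<a,b<1$. The right-hand side is therefore strictly positive for $x>0$, which gives $\Phi'(\eta)<1$.

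The only point that needs care is this middle-coefficient computation: the cross term must combine so that the coefficient of $x$ is positive. The cancellation of $ab$ leaves $2(1-a)(1-b)>0$, so the bound is strict for every $\eta\in\R$, although it is not uniform ($\Phi'\to0$ as $\eta\to\pm\infty$). Taken together, the two bounds give \eqref{eq:Phi-derivative-bound}.
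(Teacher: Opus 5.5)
Your proof is correct and follows the paper's argument essentially verbatim: the same formula $\Phi'(\eta)=\Delta x/\bigl((ax+1-b)((1-a)x+b)\bigr)$ and the same positive slack polynomial $a(1-a)x^2+2(1-a)(1-b)x+b(1-b)$. One minor correction to your closing aside: since $\Phi'$ is continuous, below $1$, and tends to $0$ as $\eta\to\pm\infty$, the upper bound is in fact uniform ($\sup_\eta\Phi'(\eta)<1$, as the paper uses in the appendix); only the lower bound $\Phi'>0$ is non-uniform, and neither point is needed for the lemma.
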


\begin{proof}
Put $x=e^\eta>0$.  Differentiating the two logarithms in
\eqref{eq:Phi} gives
\[
 \Phi'(\eta)
 =
 \frac{ax}{ax+1-b}
 -
 \frac{(1-a)x}{(1-a)x+b}.
\]
Putting the two fractions over a common denominator yields
\begin{equation}
 \Phi'(\eta)
 =\frac{(a+b-1)x}
 {(ax+1-b)((1-a)x+b)}
 =\frac{\Delta x}
 {(ax+1-b)((1-a)x+b)}.
 \label{eq:Phi-prime}
\end{equation}
Under \eqref{eq:positive-strict} the numerator is positive, so
$\Phi'(\eta)>0$: movement preserves the order of beliefs.  To prove
that it is also a strict contraction, subtract the numerator
$\Delta x$ from the positive denominator:
\begin{align}
 &(ax+1-b)((1-a)x+b)-\Delta x \notag\\
 &\qquad
 =a(1-a)x^2+2(1-a)(1-b)x+b(1-b)>0.
 \label{eq:contraction-slack}
\end{align}
Hence $\Phi'(\eta)<1$.
\end{proof}

\subsection{The Bellman branch pair is height-one}

To connect the word analysis with the Bellman recursion, compare two
trajectories that differ only in their forced first action.  After that
action, both use the same monotone stage selector whenever the same
number of searches remains.  The next lemma shows that the resulting
action words necessarily form a height-one pair.

\begin{lemma}[Height-one Bellman branches]
\label{lem:bellman-heightone}
Assume \eqref{eq:positive-strict}.  Fix $n\ge0$ and, for
$m=1,\ldots,n$, let $\mu_m$ be a monotone stage selector: it selects
site~2 below a threshold and site~1 above it, with ties resolved
monotonically.  Starting from a common belief $p_0\in(0,1)$, construct
two trajectories conditional on every search being unsuccessful, and
denote their length-$(n+1)$ action words by $H$ and $L$.  Set $H_1=1$
and $L_1=2$; at each later epoch with $m$ searches remaining, use the
same selector $\mu_m$ on both trajectories.  Then $H$ and $L$ form a
height-one pair.
\end{lemma}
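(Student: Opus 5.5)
The plan is to follow the two trajectories in the log-odds coordinates \eqref{eq:logodds} and to maintain an invariant that couples the discrepancy $\delta_k$ with the \emph{sign} of the gap between the two beliefs. Under \eqref{eq:positive-strict} and $p_0\in(0,1)$, every belief on either trajectory stays in $(0,1)$, because $F_1$ and $F_2$ map $(0,1)$ into $(0,1)$ when $0<a,b<1$. So the log odds are finite throughout. Let $\eta^H_k$ and $\eta^L_k$ be the log odds at the start of epoch $k+1$, after $k$ searches and movements, and put $g_k:=\eta^L_k-\eta^H_k$. One unsuccessful search followed by movement acts as $\eta\mapsto\Phi(\eta-\ell_1)$ under search~1 and $\eta\mapsto\Phi(\eta+\ell_2)$ under search~2, by \eqref{eq:search-shift} and \eqref{eq:Phi}.

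The invariant I will prove by induction on $k=1,\dots,n+1$ is:
\[
 \delta_k=1\ \Longrightarrow\ 0\le g_k\le\Lambda,
 \qquad
 \delta_k=0\ \Longrightarrow\ -\Lambda\le g_k\le0 .
\]
\textbf{Base case.} Start with $g_0=0$. The forced first letters are $(H_1,L_1)=(1,2)$, so the pre-movement gap is $\ell_1+\ell_2=\Lambda$. By Lemma~\ref{lem:Phi}, $\Phi$ is increasing and $1$-Lipschitz by the mean value theorem, so $g_1\in(0,\Lambda]$ and $\delta_1=1$.

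\textbf{Inductive step, common letter.} If both trajectories use the same letter, the pre-movement gap equals $g_k$. Applying $\Phi$ preserves its sign and does not increase its modulus. Since $\delta$ is unchanged, the invariant persists.

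\textbf{Inductive step, unequal letters.} Each later step uses the same selector $\mu_m$ on both trajectories, and $\mu_m$ is monotone in $p$, equivalently in $\eta$. If $\delta_k=1$, then $\eta^H_k\le\eta^L_k$. Whenever $H$ selects site~1, its belief is above the threshold (or on the monotone side of the tie set), so $L$, whose belief is at least as large, also selects site~1. Equal beliefs give equal actions because $\mu_m$ is a function. Hence the pair of letters $(1,2)$ cannot occur, and the only unequal pair is $(2,1)$. That pair shifts the gap to $g_k-\ell_2-\ell_1=g_k-\Lambda\in[-\Lambda,0]$, after which $\Phi$ keeps it in $[-\Lambda,0]$, and $\delta$ becomes $0$. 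Symmetrically, if $\delta_k=0$, then $\eta^H_k\ge\eta^L_k$, so $L$ choosing site~1 forces $H$ to choose site~1. The only unequal pair is therefore $(1,2)$, which gives the gap $g_k+\Lambda\in[0,\Lambda]$ and $\delta=1$.

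In particular $\delta_k$ never leaves $\{0,1\}$, which is exactly \eqref{eq:heightone}.

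The main obstacle is finding this invariant: the key is to tie the \emph{sign} of the belief gap to the parity level $\delta_k$. Two ingredients make it work. Order preservation ($\Phi'>0$, which is where $\Delta>0$ is used) ensures that movement cannot flip the sign of the gap. The $1$-Lipschitz bound ensures that a single unequal step of size $\Lambda$ always carries the gap to the opposite side without overshooting, since $|g_k|\le\Lambda$. I will also check that tie-breaking is harmless: "monotone resolution" means that $\mu_m$ is a nondecreasing map from beliefs to $\{2<1\}$, which is all the comparison step above uses.
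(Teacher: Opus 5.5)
Your proof is correct and follows the paper's argument essentially step for step. It uses the same log-odds gap $g_k$, the same invariant tying the sign of $g_k$ to the level $\delta_k$, and the same use of the common monotone selector to force the unequal letter pair to be $(2,1)$ at level $1$ and $(1,2)$ at level $0$. The one difference is minor: you work with closed intervals and only non-expansiveness of $\Phi$, handling $g_k=0$ by noting that equal beliefs give equal actions, whereas the paper uses the strict contraction of Lemma~\ref{lem:Phi} to keep $0<|g_k|<\Lambda$ throughout.
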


\begin{proof}
Let $N:=|H|=|L|$.  For $k=0,\ldots,N$, define the prefix-count
discrepancy
\[
 \delta_k:=N_1(H_{1:k})-N_1(L_{1:k}),
 \qquad \delta_0:=0.
\]
For $k=1,\ldots,N$, conditional on the first $k$ searches having
failed, let $\eta_{H,k}$ and $\eta_{L,k}$ be the log odds after the
target movement following the $k$th search along the two trajectories.
Set
\[
 g_k:=\eta_{L,k}-\eta_{H,k}.
\]
Thus $g_k>0$ means that the $L$-trajectory has the larger belief at the
next search epoch.

The forced first actions are $H_1=1$ and $L_1=2$, so $\delta_1=1$.
Before the first movement, \eqref{eq:search-shift} gives a log-odds gap
of
\[
 (\eta_0+\ell_2)-(\eta_0-\ell_1)=\Lambda,
 \]
where $\eta_0=\log(p_0/(1-p_0))$.  Lemma~\ref{lem:Phi} shows that the
common movement step preserves the sign of this gap and strictly
reduces its magnitude.  Hence
\[
 0<g_1<\Lambda,
 \qquad \delta_1=1.
\]

We now prove inductively that, for every $k=1,\ldots,N$,
\begin{align*}
 \delta_k=1&\quad\Longrightarrow\quad 0<g_k<\Lambda,\\
 \delta_k=0&\quad\Longrightarrow\quad -\Lambda<g_k<0.
\end{align*}
Suppose the assertion holds at $k<N$.  At epoch $k+1$, both
trajectories use the same monotone stage selector.  If they choose the
same site, \eqref{eq:search-shift} applies the same shift to both log
odds.  Thus the pre-movement gap remains $g_k$ and
$\delta_{k+1}=\delta_k$.  Lemma~\ref{lem:Phi} then preserves the sign of
the gap and reduces its magnitude, proving the assertion at $k+1$.

It remains to consider different actions.  If $\delta_k=1$, then
$g_k>0$, so the $L$-trajectory has the higher belief.  Monotonicity of
the common selector forces
\[
 H_{k+1}=2,
 \qquad
 L_{k+1}=1.
\]
Consequently $\delta_{k+1}=0$.  Immediately after these searches and
before movement, the new log-odds gap is
\[
 (\eta_{L,k}-\ell_1)-(\eta_{H,k}+\ell_2)
 =g_k-\Lambda\in(-\Lambda,0).
\]
Lemma~\ref{lem:Phi} preserves its negative sign and reduces its
magnitude, so $-\Lambda<g_{k+1}<0$.

If $\delta_k=0$, then $g_k<0$, so the $H$-trajectory has the higher
belief.  Different actions therefore require
\[
 H_{k+1}=1,
 \qquad
 L_{k+1}=2,
\]
and hence $\delta_{k+1}=1$.  The pre-movement gap is now
\[
 (\eta_{L,k}+\ell_2)-(\eta_{H,k}-\ell_1)
 =g_k+\Lambda\in(0,\Lambda).
\]
After movement, Lemma~\ref{lem:Phi} gives
$0<g_{k+1}<\Lambda$.  This completes the induction.  In particular,
$\delta_k\in\{0,1\}$ for every $k$, so $(H,L)$ is a height-one pair by
Definition~\ref{def:heightone}.
\end{proof}

The stage selectors in the lemma need not be stationary: $\mu_m$ may
depend on the number $m$ of searches remaining.  The proof requires
only that, whenever $m$ searches remain, both trajectories use the same
stage selector $\mu_m$.  This is precisely the comparison used in the
finite-horizon induction below.

\subsection{Finite-horizon induction}

\begin{theorem}[Finite-horizon threshold property]
\label{thm:finite}
Under \eqref{eq:positive-strict}, for every
$n\ge1$ the first-stage minimiser for $V_n$ admits a monotone threshold
stage selector.  Choosing such a selector $\mu_m$ whenever $m$ searches
remain gives an optimal finite-horizon policy whose decision at every
stage is threshold in the current belief.  Equivalently, the first-stage
action differential satisfies
\begin{equation}
 p<p',
 \qquad D_n(p)<0
 \quad\Longrightarrow\quad
 D_n(p')\le0.
 \label{eq:finite-one-sided}
\end{equation}
\end{theorem}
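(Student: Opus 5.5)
Proof is by induction on $n$, combining Lemma~\ref{lem:bellman-heightone} with Corollary~\ref{cor:heightone-singlecross}. For $n=1$, $D_1(p)=C_1-C_2$ is constant, so \eqref{eq:finite-one-sided} holds trivially and a constant (hence threshold) selector attains the minimum. Suppose that for every $m\le n$ the stage-$m$ minimiser admits a monotone threshold selector $\mu_m$, and that using $\mu_m$ whenever $m$ searches remain is optimal for $V_m$. We then prove \eqref{eq:finite-one-sided} for $D_{n+1}$. A threshold selector for the first stage then follows in the standard way. Put $p^*:=\inf\{p: D_{n+1}(p)<0\}$ (or $p^*=1$ if this set is empty) and choose site~2 below $p^*$ and site~1 above it. Single crossing shows that $D_{n+1}\le0$ above $p^*$ and $D_{n+1}\ge0$ below $p^*$, so this selector is a minimiser.

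The key representation comes from the inductive hypothesis together with Proposition~\ref{prop:open-loop}. Fix a belief $p_0\in(0,1)$. The branch $Q_{n+1,1}(p_0)$ is the cost of the word $H$ obtained by searching site~1 first and then following $\mu_n,\dots,\mu_1$ along the all-fail trajectory. Likewise $Q_{n+1,2}(p_0)=J_L(p_0)$, where $L$ starts with~2 and uses the same selectors. Each $J_w$ is affine, and $V_n$ is the minimum of the $J_w$. Hence $Q_{n+1,i}(p)\le J_H(p)$ or $J_L(p)$ for every $p$, with equality at $p_0$. Lemma~\ref{lem:bellman-heightone} shows that $(H,L)$ is a height-one pair, so Corollary~\ref{cor:heightone-singlecross} applies to the affine difference $J_H-J_L$: it cannot pass from negative to positive as $p$ increases.

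The main obstacle is turning this pointwise fact into the global statement, since the words $H,L$ depend on the base point. I would argue as follows. Suppose $p<p'$, $D_{n+1}(p)<0$ and $D_{n+1}(p')>0$. Take the base point $p_0:=p'$ and its words $H',L'$. Then $J_{H'}(p')-J_{L'}(p')=D_{n+1}(p')>0$. This is not yet enough, because at $p$ we only know $Q_{n+1,2}(p)\le J_{L'}(p)$, which has the wrong direction for comparing $J_{H'}-J_{L'}$ with $D_{n+1}(p)$. So I would instead pick the base point at a sign change. By continuity of $D_{n+1}$, which is piecewise affine by Proposition~\ref{prop:open-loop}, there are points $p\le r<s\le p'$ with $D_{n+1}(r)=0$ and $D_{n+1}>0$ on $(r,s]$, together with points of negative sign arbitrarily close to the left of $r$. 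Near $r$ only finitely many affine pieces are active. So I would take the base point $p_0$ slightly to the right of $r$, inside a single linearity interval of both branches. There $D_{n+1}$ coincides with the affine $J_H-J_L$ on an interval whose left end is $r$.

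If that interval has $D_{n+1}$ crossing zero upward inside it, Corollary~\ref{cor:heightone-singlecross} gives an immediate contradiction. Otherwise the upward crossing happens exactly at a breakpoint, and this is where the real care lies. Here I would use the one-sided Taylor structure: on the left of $r$, $D_{n+1}$ is again affine via words $(H^-,L^-)$, which are also a height-one pair. I would then compare $J_H$ with $J_{H^-}$ and $J_L$ with $J_{L^-}$ by the minimality of $V_n$ (each branch is a lower envelope), showing that the affine difference built from $H^-$ and $L$, or from $H$ and $L^-$, must itself cross upward. The delicate point is to guarantee that one of these mixed pairs is again height-one. I expect this to follow from monotonicity of the selectors at beliefs infinitesimally close to $r$, since both trajectories are generated by the same threshold selectors from nearby starting beliefs. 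Alternatively one could perturb the thresholds so that $r$ is not a breakpoint.
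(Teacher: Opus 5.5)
Your reduction matches the paper's. Your $r$ is its $\widehat p$, and fixing a right-neighbourhood $[r,r+\epsilon]$ on which both branches are affine is a valid substitute for the paper's sequence $p_j\downarrow\widehat p$ with a constant word pair. But the proof is incomplete because you misidentify the obstacle. With your choice of $r$, $D_{n+1}\ge 0$ on $[r,s]$, so $D_{n+1}$ never crosses zero upward \emph{inside} the interval where it agrees with $J_H-J_L$. Your argument therefore always lands in the ``breakpoint'' case, which you leave unresolved.

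The repair you sketch does not work as stated:
\begin{enumerate}[label=(\roman*),leftmargin=*]
\item A mixed pair such as $(H^-,L)$ is not produced by two trajectories started from a common belief under common selectors, so Lemma~\ref{lem:bellman-heightone} says nothing about it.
\item Perturbing thresholds would destroy the optimality of the words.
\item Your claim that $D_{n+1}$ takes negative values arbitrarily close to the left of $r$ can fail, e.g.\ if $D_{n+1}$ vanishes on an interval ending at $r$.
\end{enumerate}

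The missing observation is that Corollary~\ref{cor:heightone-singlecross} constrains the affine function $f=J_H-J_L$ on all of $[0,1]$, not $D_{n+1}$. So nothing to the left of $r$ is needed.
\begin{itemize}[leftmargin=*]
\item We have $J_H\ge Q_{n+1,1}$ everywhere, with equality at the interior point $p_0$ of the interval $[r,r+\epsilon]$ on which $Q_{n+1,1}$ is affine. Hence the nonnegative affine function $J_H-Q_{n+1,1}$ vanishes on that whole closed interval, in particular at $r$. The same holds for $L$.
\item Thus $f(r)=D_{n+1}(r)=0$ and $f>0$ on $(r,r+\epsilon]$, so $f$ is strictly increasing.
\item Since $D_{n+1}(p)<0=D_{n+1}(r)$, we get $r>p\ge 0$; also $r<s\le p'\le 1$. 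So the zero of $f$ is interior, and $f(0)<0<f(1)$, i.e.\ $d_2<0<d_1$. The corollary forbids exactly this.
\end{itemize}
This is how the paper concludes; with this observation your argument closes without any case split.
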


\begin{proof}
We prove by induction on $n$ that $D_n$ has the one-sided crossing
property \eqref{eq:finite-one-sided} and that its minimising action can
be chosen by a monotone threshold stage selector.  For $n=1$,
\[
 D_1(p)=C_1-C_2
\]
is constant, so both conclusions are immediate.

Assume both conclusions hold through horizon $n$, and fix a monotone
optimal stage selector for each of these horizons.  We first prove the
one-sided crossing property for $D_{n+1}$.  Suppose, to the contrary,
that there exist $p<p'$ such that
\begin{equation}
 D_{n+1}(p)<0<D_{n+1}(p').
 \label{eq:contradict-signs}
\end{equation}
By Proposition~\ref{prop:open-loop}, the two branch values can be
written as
\[
 Q_{n+1,i}(x)
 =\min_{\substack{w\in\{1,2\}^{n+1}\\w_1=i}}J_w(x),
 \qquad i=1,2.
\]
Each is therefore a finite minimum of continuous affine functions, so
$D_{n+1}=Q_{n+1,1}-Q_{n+1,2}$ is continuous.  Define
\begin{equation}
 \widehat p:=\inf\{x\in[p,p']:D_{n+1}(x)>0\}.
 \label{eq:first-positive-point}
\end{equation}
The strict signs in \eqref{eq:contradict-signs} and continuity imply
$p<\widehat p<p'$ and $D_{n+1}(\widehat p)=0$.  By the definition of
the infimum, there is a sequence $p_j\downarrow\widehat p$ such that
$D_{n+1}(p_j)>0$.

At each $p_j$, force first action~1 and then use the fixed monotone
optimal selectors for horizons $n,n-1,\ldots,1$; let $H_j$ be the
resulting word.  Define $L_j$ similarly after forcing first action~2.
These words attain the two Bellman branch values at $p_j$.  There are
only finitely many pairs of words of length $n+1$.  Passing to a
subsequence and relabelling that subsequence again as $(p_j)$, we may
therefore assume that, for every $j$,
\[
 H_j=H,
 \qquad
 L_j=L.
\]
Lemma~\ref{lem:bellman-heightone} shows that $(H,L)$ is a height-one
pair.  Let
\[
 f(x):=J_H(x)-J_L(x).
\]
It is affine by Lemma~\ref{lem:word-cost}.  At every point of the
subsequence,
\begin{equation}
 f(p_j)=D_{n+1}(p_j)>0.
 \label{eq:fpj}
\end{equation}
Continuity of $J_H$ and of the first branch value gives
\[
 J_H(\widehat p)
 =\lim_jJ_H(p_j)
 =\lim_jQ_{n+1,1}(p_j)
 =Q_{n+1,1}(\widehat p),
\]
and the same argument gives
$J_L(\widehat p)=Q_{n+1,2}(\widehat p)$.  Hence
\begin{equation}
 f(\widehat p)=D_{n+1}(\widehat p)=0.
 \label{eq:fphat0}
\end{equation}
Because $f$ is affine, $p_j>\widehat p$, and
$f(p_j)>f(\widehat p)=0$, the function $f$ is strictly increasing.
Since $0<\widehat p<1$, it follows that
\[
 f(0)<0<f(1).
\]
Write $d=(R_H-R_L)c=(d_1,d_2)^\top$.  By
\eqref{eq:affine-diff},
\[
 f(x)=xd_1+(1-x)d_2,
 \qquad
 f(0)=d_2,
 \qquad
 f(1)=d_1.
\]
Thus $f(0)<0<f(1)$ is exactly the forbidden sign pattern
$d_2<0<d_1$ in Corollary~\ref{cor:heightone-singlecross}.  This
contradiction proves \eqref{eq:finite-one-sided} for $D_{n+1}$.

It remains only to select a threshold minimiser.  Set
\[
 p^*:=\sup\{x\in[0,1]:D_{n+1}(x)>0\},
\]
with the convention $\sup\varnothing=0$.  If $x<p^*$ and
$D_{n+1}(x)<0$, the definition of the supremum provides some
$y\in(x,p^*]$ with $D_{n+1}(y)>0$, contradicting
\eqref{eq:finite-one-sided}.  Hence $D_{n+1}(x)\ge0$ for $x<p^*$.
For $x>p^*$ the definition of $p^*$ gives $D_{n+1}(x)\le0$.
Because $D_{n+1}=Q_{n+1,1}-Q_{n+1,2}$, site~2 is therefore a minimiser
below $p^*$ and site~1 is a minimiser above $p^*$.  At $p^*$ choose a
minimising action, with either choice allowed in case of a tie.  This
produces an optimal monotone stage selector for horizon $n+1$ and closes
the induction.  The conventions also cover the degenerate cases
$p^*=0$ and $p^*=1$.
\end{proof}

\section{Infinite horizon}
\label{sec:infinite}

We now pass from the truncated values $V_n$ to the infinite-horizon
infimum $V$ defined in Section~\ref{sec:model}.  A simple reference
policy first shows that $V$ is finite and gives a uniform bound that is
independent of $a$ and $b$, which is also useful at transition-matrix
boundaries.

Set
\begin{equation}
 \beta_{\min}:=\min\{1-\alpha_1,1-\alpha_2\}>0,
 \qquad
 C_{\min}:=\min\{C_1,C_2\},
 \qquad
 C_{\max}:=\max\{C_1,C_2\}.
 \label{eq:beta-min}
\end{equation}
Consider the policy that at every epoch searches a site whose current
posterior probability is at least $1/2$.  Conditional on having reached
that epoch, its probability of detection is at least $\beta_{\min}/2$.
Hence its expected number of searches is at most $2/\beta_{\min}$ and its
expected total cost is at most
\begin{equation}
 V_{\mathrm{ref}}:=\frac{2C_{\max}}{\beta_{\min}}.
 \label{eq:Vref}
\end{equation}
In particular,
\begin{equation}
 V(p)\le V_{\mathrm{ref}},
 \qquad
 V_n(p)\le V_{\mathrm{ref}}
 \label{eq:Vbound}
\end{equation}
for every $p\in[0,1]$, every $n\ge0$, and every
$a,b\in[0,1]$.

\begin{lemma}[Uniform truncation bound]
\label{lem:truncation}
For every $n\ge1$,
\begin{equation}
 0\le V(p)-V_n(p)
 \le\frac{V_{\mathrm{ref}}^2}{nC_{\min}}
 \qquad\text{for all }p\in[0,1].
 \label{eq:truncation}
\end{equation}
Consequently $V_n\to V$ uniformly.
\end{lemma}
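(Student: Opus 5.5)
The plan is to prove the two inequalities separately. The lower bound $V_n\le V$ is the easy half. Fix any policy $\pi$, possibly history-dependent and randomised. Its expected total cost until detection is at least the expected cost it accumulates during its first $n$ searches, because all costs are nonnegative. Stopping the accounting after $n$ searches is exactly the truncated criterion. That truncated cost of $\pi$ is at least $V_n(p)$ by Proposition~\ref{prop:open-loop}, which already covers randomised and history-dependent policies. Taking the infimum over $\pi$ gives $V_n(p)\le V(p)$.

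For the upper bound I would build, for each $n$, an explicit composite policy. It follows an $n$-horizon optimal policy for the first $n$ searches, which exists by Proposition~\ref{prop:open-loop}: an open-loop word $w$ with $J_w(p)=V_n(p)$. If the target is still undetected after those $n$ searches, it switches to the reference policy, which searches a site of posterior at least $1/2$. Its cost is
\[
 V(p)\le J_w(p)+\PP_w(\text{no detection in }n\text{ searches})\,V_{\mathrm{ref}},
\]
since the reference bound \eqref{eq:Vbound} holds from any posterior. This gives $V(p)-V_n(p)\le P_n V_{\mathrm{ref}}$, where $P_n:=\PP(\text{undetected after }n\text{ searches under }w)$.

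It remains to show $P_n\le V_{\mathrm{ref}}/(nC_{\min})$, and this is the only real step. For the optimal word $w$ at horizon $n$, the probability of reaching epoch $k$ is nonincreasing in $k$ and is at least $P_n$ for every $k\le n$. Each reached search costs at least $C_{\min}$. Hence
\[
 V_n(p)=J_w(p)\ge\sum_{k=1}^n C_{\min}\PP(\text{reach epoch }k)\ge nC_{\min}P_n .
\]
Combined with $V_n\le V_{\mathrm{ref}}$ from \eqref{eq:Vbound}, this gives $P_n\le V_{\mathrm{ref}}/(nC_{\min})$. Substituting into the previous bound yields \eqref{eq:truncation}. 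The bound is uniform in $p$, so $V_n\to V$ uniformly. Monotonicity of $V_n$ in $n$ is not even needed.

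The step needing most care is the Markov-type estimate in the previous paragraph. It works only because it is applied to the optimal truncated word itself, rather than to an arbitrary policy, so that its truncated cost is bounded by $V_{\mathrm{ref}}$. A second point to check is that concatenating an open-loop word with the feedback reference policy is an admissible policy in the class over which $V$ is the infimum. It is, since $V$ allows history-dependent policies, and the posterior after $w$ is determined by the history. I would also note that the proof uses only $\alpha_i<1$ and $C_i>0$, not the positive-determinant assumptions, consistent with the remark that the bound is independent of $a,b$.
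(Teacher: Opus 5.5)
Your proof is correct and is essentially the paper's argument. You follow an optimal $n$-horizon policy, switch to the reference policy on the event $\mathcal S_n$ of surviving all $n$ searches, and bound $\PP(\mathcal S_n)$ via $nC_{\min}\PP(\mathcal S_n)\le C_{\min}\EE\tau_n\le V_n(p)\le V_{\mathrm{ref}}$, which is exactly your sum over reach probabilities. The differences are cosmetic: you realise the optimal truncated policy as an open-loop word via Proposition~\ref{prop:open-loop}, and you spell out the lower bound $V_n\le V$, which the paper treats as immediate.
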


\begin{proof}
The first inequality is immediate because the truncated problem charges
only the search costs incurred until detection or the $n$th search and assigns
zero terminal cost if the target is still undetected at truncation.  Let $\pi_n$ be an optimal
$n$-search policy, and let $\tau_n$ denote the number of searches it
actually performs before detection or truncation.  Since every search
costs at least $C_{\min}$,
\[
 C_{\min}\EE_{\pi_n}\tau_n
 \le V_n(p)\le V_{\mathrm{ref}}.
\]
Let $\mathcal S_n$ be the event that the target remains undetected after all
$n$ searches.  On $\mathcal S_n$ we have $\tau_n=n$, hence Markov's inequality in
this elementary form gives
\begin{equation}
 \PP_{\pi_n}(\mathcal S_n)
 \le\frac{\EE_{\pi_n}\tau_n}{n}
 \le\frac{V_{\mathrm{ref}}}{nC_{\min}}.
 \label{eq:survival-n}
\end{equation}
Extend $\pi_n$, conditional on $\mathcal S_n$, by the reference policy described
above.  The conditional continuation cost is at most $V_{\mathrm{ref}}$.  Therefore
\[
 V(p)
 \le V_n(p)+V_{\mathrm{ref}}\PP_{\pi_n}(\mathcal S_n)
 \le V_n(p)+\frac{V_{\mathrm{ref}}^2}{nC_{\min}},
\]
which proves \eqref{eq:truncation}.
\end{proof}

Define the infinite-horizon differential $D$ by \eqref{eq:D}.  Under
\eqref{eq:positive-strict}, \eqref{eq:truncation} gives
\begin{equation}
 \lVert D_{n+1}-D\rVert_\infty
 \le2\lVert V_n-V\rVert_\infty
 \longrightarrow0.
 \label{eq:Duniform}
\end{equation}

\begin{proposition}[Infinite-horizon single crossing]
\label{prop:infinite-singlecross}
Under \eqref{eq:positive-strict},
\begin{equation}
 p<p',
 \qquad D(p)<0
 \quad\Longrightarrow\quad
 D(p')\le0.
 \label{eq:infinite-singlecross}
\end{equation}
Define
\begin{equation}
 p^*:=\sup\{x\in[0,1]:D(x)>0\},
 \qquad \sup\varnothing:=0.
 \label{eq:infinite-threshold}
\end{equation}
Then site~2 minimises the Bellman right-hand side for every $p<p^*$,
and site~1 minimises it for every $p>p^*$.  Choosing either minimising
action at $p^*$ therefore gives a threshold minimising selector.
\end{proposition}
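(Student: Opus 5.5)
The plan is to pass the finite-horizon one-sided crossing property of Theorem~\ref{thm:finite} to the limit, using the uniform convergence \eqref{eq:Duniform}, and then to repeat the supremum argument from the end of that proof. Note that $D$ is well defined here: $Q_i$ in \eqref{eq:Q-infinite} only requires $V$, which is bounded by \eqref{eq:Vbound}. The Bellman equation \eqref{eq:bellman} is not needed, because the proposition only concerns minimisers of the right-hand side $\min_i Q_i$.

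\textbf{Step 1 (crossing property).} Fix $p<p'$ with $D(p)<0$, and suppose for contradiction that $D(p')>0$. Set $\varepsilon:=\min\{-D(p),D(p')\}>0$. By \eqref{eq:Duniform} there is $n$ with $\lVert D_{n+1}-D\rVert_\infty<\varepsilon$. Then $D_{n+1}(p)<0<D_{n+1}(p')$, which contradicts \eqref{eq:finite-one-sided} at horizon $n+1$. This is the one point needing care: strict inequalities survive uniform limits only because both strict signs hold at the limit and we argue by contradiction. The weak conclusion $D(p')\le0$ is exactly what this contrapositive delivers. No continuity of $D$ is needed.

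\textbf{Step 2 (threshold).} With $p^*$ defined by \eqref{eq:infinite-threshold}, take $x<p^*$. If $D(x)<0$, the definition of the supremum gives some $y\in(x,p^*]$ with $y$ in the set, so $D(y)>0$. This contradicts Step~1, hence $D(x)\ge0$ and site~2 attains $\min\{Q_1,Q_2\}$ at $x$. Now take $x>p^*$. Then $x$ is not in the set, so $D(x)\le0$ and site~1 is a minimiser at $x$. At $p^*$ either minimising action may be chosen, which gives a threshold selector in the sense of the definition; the degenerate cases $p^*\in\{0,1\}$ are covered by the conventions.

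I do not expect a real obstacle. The only subtlety is making sure the uniform estimate \eqref{eq:Duniform} is genuinely available: it relies on Lemma~\ref{lem:truncation} and on $q_i\le1$, which give
\[
 |D_{n+1}-D|\le q_1|V_n\circ F_1-V\circ F_1|+q_2|V_n\circ F_2-V\circ F_2|.
\]
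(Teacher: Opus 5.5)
Your proposal is correct and follows essentially the same route as the paper: a strict reverse crossing of $D$ is transferred via the uniform estimate \eqref{eq:Duniform} to some $D_{n+1}$, contradicting Theorem~\ref{thm:finite}. The threshold is then obtained by the same supremum argument. Your explicit choice of $\varepsilon$ simply makes quantitative the paper's ``for all sufficiently large $n$.''
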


\begin{proof}
If \eqref{eq:infinite-singlecross} failed, there would exist $p<p'$ with
$D(p)<0<D(p')$.  By uniform convergence \eqref{eq:Duniform}, the same
strict inequalities would hold with $D_{n+1}$ in place of $D$ for all
sufficiently large $n$, contradicting Theorem~\ref{thm:finite}.

It remains to verify the claims following
\eqref{eq:infinite-threshold}.  If $x<p^*$ and $D(x)<0$, the definition
of the supremum gives some $y\in(x,p^*]$ such that $D(y)>0$, contrary to
\eqref{eq:infinite-singlecross}.  Hence $D(x)\ge0$ for every $x<p^*$.
If $x>p^*$, the definition of $p^*$ gives $D(x)\le0$.  Since
$D=Q_1-Q_2$, site~2 is a minimiser below $p^*$ and site~1 is a
minimiser above $p^*$.  At $p^*$, choose either action that minimises
the Bellman right-hand side.
\end{proof}

It remains to verify that a stationary selector minimising the Bellman
right-hand side actually attains $V$ in this undiscounted problem.

\begin{proposition}[Verification]
\label{prop:verification}
Let $\mu$ be a measurable stationary selector attaining the minimum in
\eqref{eq:bellman} at every belief.  Then the policy generated by
$\mu$ has expected total search cost $V(p)$ for every initial belief
$p$.
\end{proposition}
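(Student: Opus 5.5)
Fix $p$ and let $\pi_\mu$ be the stationary policy generated by $\mu$.  Write $W(p)$ for its expected total search cost until detection, with cost $+\infty$ on paths that never detect.  By definition of $V$ as an infimum over all policies, $W\ge V$.  It therefore suffices to show $W\le V$.

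Before that, the Bellman equation \eqref{eq:bellman} itself has to be justified.  I will take it from uniform convergence.  The operator $\mathcal T$ is sup-norm nonexpansive on bounded functions, because $0\le q_i\le1$.  Since $V_{n+1}=\mathcal T V_n$ and $V_n\to V$ uniformly by Lemma~\ref{lem:truncation}, we get
\[
 \lVert \mathcal T V-V\rVert_\infty
 \le\lVert \mathcal T V-\mathcal T V_n\rVert_\infty
 +\lVert V_{n+1}-V\rVert_\infty\to0 .
\]
Hence $V=\mathcal T V$.

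\textbf{Iterating along the $\mu$-trajectory.}  For $k\ge0$, let $S_k$ be the event that the first $k$ searches under $\pi_\mu$ all fail, and let $p_k$ be the posterior belief on $S_k$.  Conditional on $S_k$, the failure probability at epoch $k$ is $q_{\mu(p_k)}(p_k)$ and the next belief is $F_{\mu(p_k)}(p_k)$.  Because $\mu$ attains the minimum, \eqref{eq:bellman} gives
\[
 V(p_k)=C_{\mu(p_k)}+q_{\mu(p_k)}(p_k)V(p_{k+1}).
\]
Iterating this $n$ times from $p_0=p$ yields
\[
 V(p)=W_n(p)+\PP_\mu(S_n)\,V(p_n),
\]
where $W_n(p)$ is the expected cost of $\pi_\mu$ truncated after $n$ searches.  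Here I use that, given the deterministic selector, the belief trajectory on the failure path is deterministic, exactly as in Proposition~\ref{prop:open-loop}.  Since $V\ge0$, this gives $W_n(p)\le V(p)$ for every $n$.

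\textbf{Passing to the limit.}  The truncated costs $W_n(p)$ increase to $W(p)$ by monotone convergence, so $W(p)\le V(p)$.  This holds provided the event of never detecting contributes nothing.  That contribution is controlled as follows.  Every search costs at least $C_{\min}$, so
\[
 C_{\min}\sum_{k<n}\PP_\mu(S_k)\le W_n(p)\le V(p)\le V_{\mathrm{ref}}.
\]
Hence $\PP_\mu(S_n)\to0$, and the target is detected almost surely.  Therefore $W=\lim_n W_n\le V$, and so $W=V$.

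\textbf{Main obstacle.}  The one subtle point is the undiscounted setting: a minimising selector could in principle loop forever while paying nothing.  The positivity of the costs, $C_i>0$, together with the bound $V\le V_{\mathrm{ref}}$ from \eqref{eq:Vbound}, is what rules this out.  Measurability of $\mu$ plays no role beyond defining the policy, because along the failure path only countably many beliefs are visited and each is determined recursively.
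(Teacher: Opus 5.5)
Your proof is correct and follows essentially the same route as the paper. You derive the Bellman equation from nonexpansiveness of $\mathcal T$ and uniform convergence $V_n\to V$, iterate the selected branch along the failure path, and pass to the limit by monotone convergence. The one difference is the closing step. You combine $W_n\le V$ with the free inequality $W\ge V$, which holds because the $\mu$-policy is admissible in the infimum defining $V$. The paper instead shows $V-J_w\to0$ directly via the survival bound $\rho(p)A_w\1\le V_{\mathrm{ref}}/(nC_{\min})$. With your closing, the paragraph proving $\PP_\mu(S_n)\to0$ is not needed: since $C_i>0$, monotone convergence already accounts for the never-detected paths.
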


\begin{proof}
First we verify the Bellman equation.  For each $p\in[0,1]$ and
$i\in\{1,2\}$,
\[
 \left|\bigl(C_i+q_i(p)V_n(F_i(p))\bigr)
       -\bigl(C_i+q_i(p)V(F_i(p))\bigr)\right|
 \le \lVert V_n-V\rVert_\infty,
\]
because $0\le q_i(p)\le1$.  The difference between the minima of the
two pairs of expressions is no greater than the largest difference
between corresponding expressions.  Therefore
\[
 \lVert\mathcal T V_n-\mathcal T V\rVert_\infty
 \le \lVert V_n-V\rVert_\infty.
\]
Since $V_{n+1}=\mathcal T V_n$ and $V_n\to V$ uniformly by
Lemma~\ref{lem:truncation},
\[
 \lVert V-\mathcal T V\rVert_\infty
 \le \lVert V-V_{n+1}\rVert_\infty
    +\lVert\mathcal T V_n-\mathcal T V\rVert_\infty
 \longrightarrow0.
\]
Thus $V=\mathcal T V$.

Fix an initial belief $p$ and an integer $n\ge1$.  Along the history in
which the first $n$ searches are all unsuccessful, the selector $\mu$
determines a word $w=w_1\cdots w_n$.  Because $\mu$ selects a
minimising action, repeatedly substituting the selected branch of the
Bellman equation gives
\begin{equation}
 V(p)
 =J_w(p)
  +\rho(p)A_w\1\,
 V\!\left(\frac{(\rho(p)A_w)_1}{\rho(p)A_w\1}\right).
 \label{eq:verification-iterate}
\end{equation}
If $\rho(p)A_w\1=0$, the continuation term in this display is
interpreted as zero, consistently with the convention following
\eqref{eq:F1}--\eqref{eq:F2}.
Here $J_w(p)$ is the expected cost incurred until detection or the
$n$th search, as established in Lemma~\ref{lem:word-cost}, and
$\rho(p)A_w\1$ is the probability that all $n$ searches are
unsuccessful.  Conditional on that event, the argument of $V$ in the
second term is the probability that the target is at site~1 after the
$n$th movement.  Thus the second term is precisely the expected
continuation cost after the first $n$ searches.

The probability of reaching any one of the first $n$ searches is at
least the probability $\rho(p)A_w\1$ of surviving all of them.  Since
each search costs at least $C_{\min}$, Lemma~\ref{lem:word-cost} gives
\[
 J_w(p)\ge nC_{\min}\rho(p)A_w\1.
\]
The continuation term in \eqref{eq:verification-iterate} is
nonnegative, so $J_w(p)\le V(p)\le V_{\mathrm{ref}}$.  Consequently,
\[
 \rho(p)A_w\1
 \le\frac{V_{\mathrm{ref}}}{nC_{\min}}.
\]
Using $V\le V_{\mathrm{ref}}$ in the continuation term of
\eqref{eq:verification-iterate} now gives
\[
 0\le V(p)-J_w(p)
 \le\frac{V_{\mathrm{ref}}^2}{nC_{\min}},
\]
which tends to zero as $n\to\infty$.

As $n$ increases, these words are consistent prefixes of the sequence
of searches generated by $\mu$ along successive unsuccessful outcomes.
The corresponding accumulated search costs increase almost surely to
the total search cost under $\mu$, and their expectations are
$J_w(p)$.  The monotone convergence theorem therefore shows that the
expected total cost under $\mu$ is the limit of $J_w(p)$, which the
preceding bound identifies as $V(p)$.
\end{proof}

Propositions~\ref{prop:infinite-singlecross} and~\ref{prop:verification}
prove Theorem~\ref{thm:positive} in the strict interior.

\section{The nonpositive-determinant regimes}
\label{sec:negative}

MacPhee and Jordan~\cite[Sections~4--5]{macphee1995} established the
negative-determinant regime by a three-case cobweb construction, later
developed further in Jordan's
thesis~\cite[Sections~4.4--4.5]{jordan1997}.  Since the
previously unresolved contribution of the present paper is the
positive-determinant regime, we state the negative-determinant theorem
here.

For the strict negative-determinant regime, assume
\begin{equation}
0<a,b<1,
\qquad
0<\alpha_1,\alpha_2<1,
\qquad
C_1,C_2>0,
\qquad
\Delta=a+b-1<0.
\label{eq:negative-strict}
\end{equation}

\begin{theorem}[Negative-determinant threshold theorem]
\label{thm:negative}
Under \eqref{eq:negative-strict}, the moving-target search problem admits
an optimal deterministic stationary threshold selector.
\end{theorem}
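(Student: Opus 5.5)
The plan is to run the same architecture as for Theorem~\ref{thm:positive}: finite-horizon single crossing by induction on $n$, then the passage to infinite horizon. The infinite-horizon part needs nothing new, because Lemma~\ref{lem:truncation}, \eqref{eq:Duniform} and Proposition~\ref{prop:verification} never used the sign of $\Delta$. So Theorem~\ref{thm:negative} reduces to proving \eqref{eq:finite-one-sided} under \eqref{eq:negative-strict}. Proposition~\ref{prop:infinite-singlecross} then goes through verbatim, and verification gives optimality of the stationary threshold selector.

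For the finite horizon, I would first redo Lemma~\ref{lem:Phi}. By \eqref{eq:Phi-prime}, when $\Delta<0$ we get $-1<\Phi'(\eta)<0$: the slack computation \eqref{eq:contraction-slack} with $|\Delta|$ in place of $\Delta$ still gives a positive remainder. So movement is a strict contraction of log odds that \emph{reverses} order. To restore the positive-determinant picture, I would relabel the sites at every even epoch. Let $P$ be the transposition matrix. In the relabelled coordinates the one-step kernels become $MP$ or $PM$, and these have determinant $-\Delta>0$. The search matrices alternate between the forms \eqref{eq:A1}--\eqref{eq:A2} and their conjugates by $P$. Correspondingly, the letter $i$ at an even position is read as ``search the site currently labelled $i$''. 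In these twisted coordinates a monotone stage selector in the original belief becomes monotone in the relabelled belief, with the roles of the letters exchanged at even epochs. The proof of Lemma~\ref{lem:bellman-heightone} then transfers: the sign of the gap $g_k$ alternates in original coordinates, but in twisted coordinates it behaves exactly as before. The twisted words of the two Bellman branches therefore form a height-one pair, and Lemma~\ref{lem:chain}, being purely combinatorial, applies unchanged to them.

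The algebraic step is then to rebuild Section~\ref{sec:separator} for the time-inhomogeneous products $B_{w}=B^{(1)}_{w_1}B^{(2)}_{w_2}\cdots$. Here $B^{(k)}_i$ denotes the position-$k$ search-and-move matrix in twisted coordinates, and the cost vector $c$ is replaced by $Pc$ at even positions. Every factor is entrywise positive with positive determinant. Hence Lemma~\ref{lem:nesting} holds verbatim for prefixes, since only strict total positivity of the prefix products was used. Lemma~\ref{lem:replacement-separator} likewise needs only the row-sum slope of the prefix and the sign of $(1,-1)(B^{(k)}_1-B^{(k)}_2)$, which I would recompute for both parities.

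The main obstacle is the analogue of Lemma~\ref{lem:commutator} and Lemma~\ref{lem:bubble-separator}. A swap at position $k$ now involves two \emph{different} position-dependent matrix pairs. The local increment is $K^{(k)}=R_{12}-R_{21}$ built from $B^{(k)},B^{(k+1)}$ and the appropriate cost vectors, and the needed identity is $B^{(k)}_1B^{(k+1)}_2-B^{(k)}_2B^{(k+1)}_1=K^{(k)}\Gamma^{(k)}$ with $\Gamma^{(k)}\ge0$. I would first compute $K^{(k)}$ for both parities and check that it retains the antidiagonal sign pattern $\bigl(\begin{smallmatrix}0&-\\+&0\end{smallmatrix}\bigr)$. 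Then I would verify the factorisation by direct entrywise computation, as in \eqref{eq:commutator-expanded}. If that fails for one parity, I would fall back on pairing epochs two at a time: work with the two-step matrices, which all have positive determinant, and treat a swap straddling a pair boundary separately. With these separators in hand, Theorem~\ref{thm:common-separator}, Corollary~\ref{cor:heightone-singlecross} and the proof of Theorem~\ref{thm:finite} go through word for word.
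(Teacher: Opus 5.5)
\textbf{There is a genuine gap, and it sits exactly at the adjacent-move separator you flag as the main obstacle.}

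Several parts of your proposal are sound. The infinite-horizon reduction does not depend on the sign of $\Delta$. Relabelling at even epochs turns movement into an increasing log-odds contraction, the twisted search shifts still sum to $\Lambda$, and the common selector stays monotone, so the twisted Bellman words are indeed height-one. The replacement separator also survives: $(1,-1)$ annihilates the immediate-cost part $\pm(E_1-E_2)$, and $(1,-1)(B^{(k)}_1-B^{(k)}_2)$ equals $-(\beta_1,\beta_2)MP$ or $-(\beta_1,\beta_2)M$.

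The adjacent move, however, is not a computation that can come out right. Because letters at even positions are flipped, a twisted move $21\to12$ at positions $(k,k+1)$ is, in the original labels, $22\to11$ (for $k$ odd) or $11\to22$ (for $k$ even). It changes how often each site is searched, not merely the order. With $B^{(k)}_i=A_iP$ and $B^{(k+1)}_i=PA_{3-i}$, for $k$ odd one gets
\[
K^{(k)}=E_1+A_1E_1-E_2-A_2E_2=\begin{pmatrix}1+\alpha_1&-2\\2&-(1+\alpha_2)\end{pmatrix},
\qquad
B^{(k)}_1B^{(k+1)}_2-B^{(k)}_2B^{(k+1)}_1=A_1^2-A_2^2.
\]
For $k$ even, $K^{(k)}=\bigl(\begin{smallmatrix}-2&1+\alpha_2\\-(1+\alpha_1)&2\end{smallmatrix}\bigr)$. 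Neither matrix is antidiagonal. For $x_1,x_2\ge0$ and $k$ odd, $(x_1,-x_2)K^{(k)}\le0$ holds only when $\frac{1+\alpha_2}{2}\le x_1/x_2\le\frac{2}{1+\alpha_1}$. So the analogue of Lemma~\ref{lem:bubble-separator} is false. The $t$ with $\xi_tB_uK^{(k)}\le0$ are only the slopes of $B_uy$ for $y$ in a cone $\mathcal C_k$ around $\1$; for $k$ odd its slopes are $[\frac{1+\alpha_1}{2},\frac{2}{1+\alpha_2}]$. In particular the endpoints of $\cI(B_u)$ never work.

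Theorem~\ref{thm:common-separator} needs one $t_*$ that serves every prefix in the chain, and these admissible sets are not nested. Suppose $u$ and $uz$ are operation prefixes with $|z|=1$, as produced by case~(a) of Lemma~\ref{lem:chain}, and $uz$ is deepest. Then $B_z\1$ has slope $\alpha_1$ or $1/\alpha_2$ (for $k$ odd), or $\alpha_2$ or $1/\alpha_1$ (for $k$ even). These slopes always lie outside $\mathcal C_k$. Hence $t_*=\sigma(B_{uz})$ does not make the local factor at $u$ nonpositive.

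A concrete instance: take the twisted height-one pair $H=112$, $L=211$ (original words $122$ and $221$). Here $u_*=1$ and $t_*=\alpha_1$, and
\[
(1,-\alpha_1)K^{(1)}=\bigl(1-\alpha_1,\ \alpha_1(1+\alpha_2)-2\bigr),
\]
whose first entry is positive. At the cost level, $C_1-C_2$ now enters the increments. For example, $H=11$, $L=22$ gives $d=(C_1(1+\alpha_1)-2C_2,\ 2C_1-C_2(1+\alpha_2))^\top$, so admissible separators depend on the costs.

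Your two-step fallback does not remove the obstruction. Within-block twisted moves are again $22\to11$ block replacements with the same increment, and the block-straddling moves are not addressed. The proposal therefore lacks its central certificate.

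The paper does not transport the word machinery to $\Delta<0$ at all. Its argument runs as follows:
\begin{enumerate}[label=(\arabic*),leftmargin=2em]
\item It builds an explicit piecewise-affine Bellman fixed point from the always-search costs $W_1,W_2$, by pulling a central crossing back through the decreasing maps $F_i$ (the cobweb construction).
\item It orders the slopes of the pieces via Lemmas~\ref{lem:negative-pullback} and~\ref{lem:negative-flattening}, so that the competing action's excess cost is monotone away from the centre.
\item It verifies optimality through the uniform survival bound on $[a,1-b]$.
\item It closes the boundary configurations $G_0(P_i)=0$ by continuity in $C_1$.
\end{enumerate}
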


A detailed, self-contained proof is given in
Appendix~\ref{app:negative}, which reorganises the MacPhee--Jordan
cobweb argument.

\subsection{The zero-determinant regime}

When $\Delta=0$, movement completely erases the Bayesian information
created by the preceding miss.

\begin{proposition}[Zero determinant]
\label{prop:zero-determinant}
Suppose
\[
0<a,b<1,
\qquad
0\le\alpha_1,\alpha_2<1,
\qquad
C_1,C_2>0,
\qquad
a+b=1.
\]
Then an optimal deterministic stationary threshold selector exists.
\end{proposition}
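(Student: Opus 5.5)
When $\Delta=0$ the two rows of $M$ coincide, $M=\1\pi$ with $\pi=(a,1-a)$. After any unsuccessful search the target is therefore redistributed according to $\pi$, whatever the belief was. Hence $F_1(p)=F_2(p)=a$ for every $p$; this can be read off directly from \eqref{eq:F1}--\eqref{eq:F2}, since $1-b=a$. When $q_i(p)=0$ the convention after \eqref{eq:F1}--\eqref{eq:F2} applies. The plan is to show that every Bellman branch is affine in $p$ with a common continuation constant. Their difference is then affine, and an affine function crosses zero at most once, which is all a threshold needs.

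\textbf{Step 1 (value and Bellman equation).} The reference policy bound \eqref{eq:Vbound} holds for all $a,b$, and Lemma~\ref{lem:truncation} does not use $\Delta$. So $V\le V_{\mathrm{ref}}$, $V_n\to V$ uniformly, and the first half of the proof of Proposition~\ref{prop:verification} gives $V=\mathcal T V$. None of these arguments uses positivity of the entries of $A_i$ or the sign of $\Delta$; the case $\alpha_i=0$ only needs the zero-continuation convention. Set $v:=V(a)$, a finite constant. The Bellman branches are then
\[
 Q_1(p)=C_1+(1-\beta_1p)v,\qquad Q_2(p)=C_2+(1-\beta_2(1-p))v,
\]
using $q_1(p)=1-\beta_1p$ and $q_2(p)=1-\beta_2(1-p)$.

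\textbf{Step 2 (single crossing).} The differential is
\[
 D(p)=C_1-C_2+v\bigl(\beta_2-(\beta_1+\beta_2)p\bigr),
\]
which is affine with slope $-(\beta_1+\beta_2)v\le0$. So $D$ is nonincreasing, and the implication \eqref{eq:infinite-singlecross} holds trivially. Define $p^*$ by \eqref{eq:infinite-threshold}. Exactly as in Proposition~\ref{prop:infinite-singlecross}, site~2 minimises below $p^*$ and site~1 minimises above it. If $v=0$, $D$ is constant and either action is optimal on the whole interval. That case is covered by the degenerate threshold convention.

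\textbf{Step 3 (verification).} The selector is Borel, since it is determined by a single threshold. The second half of the proof of Proposition~\ref{prop:verification} iterates the Bellman equation along the failure path and bounds the survival probability by $V_{\mathrm{ref}}/(nC_{\min})$. That argument uses only $V=\mathcal T V$, the bound $V\le V_{\mathrm{ref}}$, $C_{\min}>0$, and Lemma~\ref{lem:word-cost}. It therefore applies verbatim and shows that the threshold selector attains $V$.

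\textbf{Main obstacle.} The only delicate point is confirming that the infinite-horizon machinery of Section~\ref{sec:infinite} does not secretly rely on \eqref{eq:positive-strict}. In particular, $\alpha_i=0$ is allowed here, so $q_i$ may vanish at an endpoint. I would check this by reading through Lemma~\ref{lem:truncation} and Proposition~\ref{prop:verification}: they use only $\beta_{\min}>0$, $C_{\min}>0$, $0\le q_i\le1$, and the zero-continuation convention. All of these hold under the present hypotheses.
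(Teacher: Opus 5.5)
Your proof is correct and follows the paper's argument. The identity $F_1=F_2=a$ makes $D$ affine with slope $-(\beta_1+\beta_2)V(a)$, monotone tie-breaking then gives a threshold minimiser, and Proposition~\ref{prop:verification} gives its optimality.

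Two small remarks. Your explicit check that the Section~\ref{sec:infinite} machinery needs neither \eqref{eq:positive-strict} nor $\alpha_i>0$ (including $V=\mathcal T V$ and the zero-continuation convention) fills in a step the paper leaves implicit. Also, $v=V(a)\ge C_{\min}>0$, so your $v=0$ case is vacuous and the slope of $D$ is in fact strictly negative.
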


\begin{proof}
Because $1-b=a$,
\[
F_1(p)=F_2(p)=a
\qquad(p\in[0,1]).
\]
Thus
\[
D(p)
=C_1-C_2+[q_1(p)-q_2(p)]V(a),
\]
and
\begin{equation}
D'(p)
=-[(1-\alpha_1)+(1-\alpha_2)]V(a)<0.
\label{eq:zero-D-slope}
\end{equation}
Therefore site~1 can become strictly preferable at most once as $p$
increases.  Monotone tie breaking gives a threshold minimiser, and
Proposition~\ref{prop:verification} verifies its optimality.
\end{proof}

\section{Boundary parameters and proof of the main theorem}
\label{sec:boundary}

Sections~\ref{sec:separator}--\ref{sec:infinite} prove the strict
positive-determinant theorem.  Section~\ref{sec:negative} states the
strict negative-determinant theorem and proves the interior
zero-determinant case.  We now close the remaining transition and
proper-detection boundaries.

To make the dependence on the parameter vector explicit, write
$V_\theta$ for the infinite-horizon value $V$ defined in
Section~\ref{sec:model}, $V_{n,\theta}$ for the truncated value $V_n$
defined by \eqref{eq:finite-bellman}, and $D_\theta$ for the
infinite-horizon action differential $D$ defined by \eqref{eq:D}.

\begin{lemma}[Local uniform continuity of the value]
\label{lem:parameter-continuity}
Fix a parameter vector $\theta_0$ with $C_1,C_2>0$ and
$\alpha_1,\alpha_2<1$.  There is a neighbourhood $\mathcal N$ of
$\theta_0$ and a constant $K<\infty$ such that
\begin{equation}
\sup_{\theta\in\mathcal N}\sup_{p\in[0,1]}
|V_\theta(p)-V_{n,\theta}(p)|
\le\frac{K}{n}.
\label{eq:parameter-local-uniform}
\end{equation}
Consequently $V_\theta(p)$ is jointly continuous in $(\theta,p)$ at
every parameter vector with $\alpha_1,\alpha_2<1$.
\end{lemma}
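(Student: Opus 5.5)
The uniform truncation bound \eqref{eq:truncation} of Lemma~\ref{lem:truncation} already has the required form. Its constant $V_{\mathrm{ref}}^2/C_{\min}$ depends only on $C_1,C_2,\alpha_1,\alpha_2$, and it is independent of $a,b$. The plan is therefore to check that the proof of Lemma~\ref{lem:truncation} is valid for every parameter vector with $\alpha_1,\alpha_2<1$ and $C_1,C_2>0$, including boundary transition values. Then we bound the constant uniformly on a neighbourhood, and deduce joint continuity from uniform convergence of continuous functions.

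\textbf{Step 1: the truncation bound holds at every such $\theta$.} The reference policy searches a site with posterior at least $1/2$. Conditional on reaching an epoch, it detects with probability at least $\beta_{\min}/2$, whatever $a,b\in[0,1]$ are. Hence $V_\theta,V_{n,\theta}\le V_{\mathrm{ref}}(\theta)=2C_{\max}/\beta_{\min}$. The Markov-inequality argument $\PP(\mathcal S_n)\le V_{\mathrm{ref}}/(nC_{\min})$ uses only these bounds and $C_{\min}>0$. The extension of $\pi_n$ by the reference policy is likewise valid whenever $\alpha_i$ may be $0$ or $a,b$ are endpoints. In those cases $V_n$ is defined via the open-loop representation (Proposition~\ref{prop:open-loop}), which needs no division by $q_i$. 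This gives $0\le V_\theta-V_{n,\theta}\le V_{\mathrm{ref}}(\theta)^2/(nC_{\min}(\theta))$ for all $\theta$ with $\alpha_i<1$ and $C_i>0$.

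\textbf{Step 2: a locally uniform constant.} Choose $\mathcal N$ as a small closed box around $\theta_0$ on which $\alpha_i\le\bar\alpha<1$, $C_i\ge\underline C>0$ and $C_i\le\bar C$. Intersect the box with the admissible set $[0,1]^4\times(0,\infty)^2$. Then $K:=(2\bar C/(1-\bar\alpha))^2/\underline C$ works, which is \eqref{eq:parameter-local-uniform}.

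\textbf{Step 3: joint continuity.} By Proposition~\ref{prop:open-loop}, $V_{n,\theta}(p)=\min_{|w|=n}\rho(p)R_w(\theta)c$. Each $R_w$ is a polynomial in the entries of $A_1,A_2,E_i$, and hence polynomial in $(a,b,\alpha_1,\alpha_2)$. Each $J_w$ is therefore jointly continuous in $(\theta,p)$, and so is the finite minimum $V_{n,\theta}(p)$. On $\mathcal N\times[0,1]$ the functions $V_{n,\theta}(p)$ converge uniformly to $V_\theta(p)$ by Step~2. A uniform limit of continuous functions is continuous, so $V_\theta(p)$ is jointly continuous at $\theta_0$.

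The main point to watch is Step~1 at boundary parameters: the paper proved Lemma~\ref{lem:truncation} in the strict-interior context. One must confirm that the finite-horizon value used there coincides with the recursion \eqref{eq:finite-bellman} under the zero-continuation convention, and that optimal $n$-search policies exist. Both follow from the open-loop representation, whose proof never divides by $q_i$.
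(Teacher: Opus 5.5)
Your proposal is correct and follows the paper's proof essentially step for step. The paper also chooses a neighbourhood on which $\max_i\alpha_i\le\bar\alpha<1$ and $\underline C\le C_i\le\overline C$, and reruns the proof of Lemma~\ref{lem:truncation} with the common bound $\overline V=2\overline C/(1-\bar\alpha)$ to obtain $K=\overline V^{\,2}/\underline C$. It then deduces joint continuity from locally uniform convergence, using the open-loop representation of $V_{n,\theta}$ as a finite minimum of word costs that are polynomial in the parameters. Your extra check, that neither the open-loop representation nor the truncation argument divides by $q_i$ at boundary parameters, is a detail the paper leaves implicit.
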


\begin{proof}
Choose the neighbourhood so that, throughout $\mathcal N$,
\[
\max_i\alpha_i\le\bar\alpha<1,
\qquad
0<\underline C\le C_i\le\overline C<\infty.
\]
The reference policy from Section~\ref{sec:infinite} then gives the
common bound
\[
V_\theta(p),\ V_{n,\theta}(p)
\le
\overline V:=\frac{2\overline C}{1-\bar\alpha}.
\]
Repeating the proof of Lemma~\ref{lem:truncation} with
$\overline V$ and $\underline C$ yields
\[
0\le V_\theta(p)-V_{n,\theta}(p)
\le
\frac{\overline V^{\,2}}{n\underline C},
\]
uniformly on $\mathcal N\times[0,1]$.

For fixed $n$, Proposition~\ref{prop:open-loop} writes
$V_{n,\theta}$ as the minimum of finitely many word costs.  Each word
cost is polynomial in the entries of $A_1,A_2$ and linear in the search
costs, hence jointly continuous in $(\theta,p)$.  Thus
$V_{n,\theta}$ is jointly continuous, and the locally uniform limit
$V_\theta$ is jointly continuous as well.
\end{proof}

The normalised belief $F_i$ may be undefined at a boundary point where
$q_i=0$, but the perspective continuation term has a unique continuous
extension.

\begin{lemma}[Continuity of the action differential]
\label{lem:D-parameter-continuity}
On the parameter set $C_i>0$, $\alpha_i<1$, the action differential
$D_\theta(p)$ is jointly continuous in $(\theta,p)$ when
$q_i(p)V_\theta(F_i(p))$ is interpreted as zero at $q_i(p)=0$.
\end{lemma}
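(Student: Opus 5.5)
The plan is to rewrite each continuation term in perspective form, so that it is manifestly a composition of continuous maps, and then to invoke Lemma~\ref{lem:parameter-continuity}. Searching site~$i$ from belief $p$ leaves the unnormalised joint vector $\nu_i(\theta,p):=\rho(p)A_i$. This vector is polynomial in $(\theta,p)$, and its total mass is $\nu_i\1=q_i(p)$. Wherever $q_i(p)>0$,
\[
 q_i(p)V_\theta(F_i(p))=(\nu_i\1)\,V_\theta\!\left(\frac{(\nu_i)_1}{\nu_i\1}\right).
\]
So it suffices to prove that the function $G(\theta,\nu):=(\nu\1)V_\theta((\nu)_1/\nu\1)$ for $\nu\ne0$, with $G(\theta,0):=0$, is jointly continuous on (parameters with $C_i>0$, $\alpha_i<1$) $\times$ (nonnegative row vectors). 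We then compose $G$ with $(\theta,p)\mapsto\nu_i(\theta,p)$. The term $C_1-C_2$ is trivially continuous, so this gives the continuity of $D_\theta$.

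\textbf{Continuity away from zero.} Take a point $(\theta_0,\nu^0)$ with $\nu^0\ne0$. Nearby, $\nu\1$ is bounded away from zero, so the normalised coordinate $(\nu)_1/\nu\1\in[0,1]$ depends continuously on $\nu$. Lemma~\ref{lem:parameter-continuity} says that $V_\theta(x)$ is jointly continuous in $(\theta,x)$. The product of these continuous factors is therefore continuous.

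\textbf{Continuity at $\nu=0$.} This is the only place where the value of $F_i$ itself may be undefined, and it is the step needing care. Use the locally uniform bound from the proof of Lemma~\ref{lem:parameter-continuity}: on a neighbourhood $\mathcal N$ of $\theta_0$ we have $0\le V_\theta\le\overline V<\infty$. Then
\[
 0\le G(\theta,\nu)\le\overline V\,\nu\1\longrightarrow0
\]
as $(\theta,\nu)\to(\theta_0,0)$. This agrees with the convention $G(\theta_0,0)=0$.

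Since $\nu_i(\theta,p)=0$ exactly when $q_i(p)=0$, composing $G$ with the polynomial map $(\theta,p)\mapsto\nu_i(\theta,p)$ yields joint continuity of $q_i(p)V_\theta(F_i(p))$, with the stated interpretation at $q_i(p)=0$. Subtracting the two branches then gives joint continuity of $D_\theta$.

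The main obstacle is only the degenerate boundary $q_i=0$, where $F_i$ jumps. It is handled by the uniform bound $\overline V$, which is available precisely because $\alpha_i<1$ keeps the reference policy bounded near $\theta_0$.
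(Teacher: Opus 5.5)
Your proof is correct and follows the paper's argument. Where $q_i>0$ you use joint continuity of $V_\theta$ from Lemma~\ref{lem:parameter-continuity} together with continuity of the normalised belief, and near $q_i=0$ you use the local bound $0\le q_i(p)V_\theta(F_i(p))\le q_i(p)\overline V$. Writing the continuation term as the perspective function $G(\theta,\nu)$ composed with the polynomial map $(\theta,p)\mapsto\rho(p)A_i$ is only a more explicit version of the paper's appeal to the rational formulas for $F_i$.
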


\begin{proof}
Away from $q_i=0$, joint continuity follows from
Lemma~\ref{lem:parameter-continuity} and the rational formulas for
$F_i$.  Near $q_i=0$, the local uniform bound in the proof of that
lemma gives
\[
|q_i(p)V_\theta(F_i(p))|
\le q_i(p)\overline V\longrightarrow0,
\]
regardless of the limiting value of the normalised belief.  Hence the
continuation term, and therefore $D_\theta$, extends continuously.
\end{proof}

\begin{lemma}[Closedness of no reverse crossing]
\label{lem:closedness}
Suppose $\theta_m\to\theta$, all overlook probabilities are strictly
below one, and every $D_{\theta_m}$ has no reverse strict crossing:
there are no $p<p'$ with
\[
D_{\theta_m}(p)<0<D_{\theta_m}(p').
\]
Then $D_\theta$ has no reverse strict crossing either.
\end{lemma}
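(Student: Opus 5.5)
The plan is to argue by contradiction, using the joint continuity of the action differential from Lemma~\ref{lem:D-parameter-continuity}.

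Suppose $D_\theta$ has a reverse strict crossing: there are $p<p'$ with
\[
D_\theta(p)<0<D_\theta(p').
\]
The argument then runs in three steps.

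\textbf{Step 1.} Every $\theta_m$ and the limit $\theta$ have $C_i>0$ and $\alpha_i<1$. The positivity of the costs at the limit is part of the standing parameter set $C_i>0$, $\alpha_i<1$ on which the action differential is considered. Hence Lemma~\ref{lem:D-parameter-continuity} applies on a neighbourhood of $(\theta,p)$ and of $(\theta,p')$. For boundary values of the transition or detection parameters we use its convention that $q_i(p)V_\theta(F_i(p))$ is set to zero where $q_i(p)=0$.

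\textbf{Step 2.} Joint continuity gives $D_{\theta_m}(p)\to D_\theta(p)$ and $D_{\theta_m}(p')\to D_\theta(p')$. Only pointwise convergence at the two fixed beliefs $p$ and $p'$ is needed here. Since both limits are strict inequalities, for all sufficiently large $m$ we get
\[
D_{\theta_m}(p)<0<D_{\theta_m}(p').
\]
This is a reverse strict crossing of $D_{\theta_m}$, contradicting the hypothesis.

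\textbf{Step 3.} The main point to check is that Lemma~\ref{lem:D-parameter-continuity} really delivers convergence at the limit point, even when $\theta$ lies on a boundary where $q_i(p)=0$ or $F_i$ is undefined. This reduces to Lemma~\ref{lem:parameter-continuity}, which gives a neighbourhood of $\theta$ with a uniform bound $\overline V$. With that bound, the continuation terms either converge through the rational formulas for $F_i$ or are squeezed to zero by $q_i\overline V$.

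If the limiting costs were allowed to be zero, I would first note that the uniform bound $\overline V$ needs $C_i$ bounded above and $\alpha_i$ bounded away from one. The proof of Lemma~\ref{lem:parameter-continuity} also uses $\underline C>0$, so I would take the positivity of the costs at the limit from the standing hypotheses and not try to prove it.
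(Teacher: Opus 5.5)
Your proposal is correct and follows the paper's own argument. Suppose $D_\theta(p)<0<D_\theta(p')$; joint continuity from Lemma~\ref{lem:D-parameter-continuity}, applied at the two fixed beliefs, gives the same strict inequalities for $D_{\theta_m}$ for large $m$, which contradicts the hypothesis. Your Steps~1 and~3 only spell out the standing assumptions ($C_i>0$ and $\alpha_i<1$ at the limit) under which that continuity lemma applies, and the paper leaves these implicit.
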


\begin{proof}
If $D_\theta(p)<0<D_\theta(p')$ for some $p<p'$, joint continuity from
Lemma~\ref{lem:D-parameter-continuity} gives the same two strict
inequalities for all sufficiently large $m$, a contradiction.
\end{proof}

\begin{proposition}[Threshold theorem for $\alpha_i<1$]
\label{prop:classical-range}
For every transition matrix $M$ of the form \eqref{eq:M}, every
$0\le\alpha_1,\alpha_2<1$, and every $C_1,C_2>0$, there is an optimal
deterministic stationary threshold selector.
\end{proposition}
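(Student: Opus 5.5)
We plan to prove Proposition~\ref{prop:classical-range} by approximation from the strict interior, combining the interior theorems with the closedness of the no-reverse-crossing property.

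\textbf{Step 1: interior parameters.} Fix a parameter vector $\theta$ with $0\le\alpha_1,\alpha_2<1$, $C_1,C_2>0$ and arbitrary $a,b\in[0,1]$. If $\theta$ lies in the strict interior $0<a,b<1$, $0<\alpha_i<1$, then one of the following already applies:
\begin{itemize}
\item Theorem~\ref{thm:positive}, if $\Delta>0$;
\item Proposition~\ref{prop:zero-determinant}, if $\Delta=0$;
\item Theorem~\ref{thm:negative}, if $\Delta<0$.
\end{itemize}
In each case $D_\theta$ has no reverse strict crossing. For the positive case this is \eqref{eq:infinite-singlecross}. For the other two cases we would extract the same one-sided property from the existence of an optimal threshold selector. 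Concretely, suppose $p<p'$ with $D(p)<0<D(p')$. Then site~1 is the unique minimiser at $p$ and site~2 the unique minimiser at $p'$. A threshold selector attaining the Bellman minimum would have to choose $1$ at $p$ and $2$ at $p'$, which is impossible.

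The only subtlety here is that the negative-determinant theorem must supply a selector that minimises the Bellman right-hand side pointwise, not merely an optimal policy. We would read this off the appendix's statement, or argue that any optimal stationary selector must minimise $\mathcal T V$ at every belief. The latter holds because an optimal policy's first action $i$ satisfies $V(p)=C_i+q_i(p)V^{\mathrm{cont}}\ge Q_i(p)$.

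\textbf{Step 2: approximation.} For a boundary $\theta$, choose interior $\theta_m\to\theta$, moving $a,b$ into $(0,1)$ and any zero $\alpha_i$ slightly upward while keeping $\alpha_i<1$. Every $D_{\theta_m}$ has no reverse strict crossing by Step~1. Lemma~\ref{lem:closedness}, which rests on the joint continuity of Lemma~\ref{lem:D-parameter-continuity}, then transfers the property to $D_\theta$. Here $D_\theta$ is defined with the convention $q_iV(F_i)=0$ when $q_i=0$.

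\textbf{Step 3: threshold selector and optimality.} Set $p^*:=\sup\{x:D_\theta(x)>0\}$ and argue exactly as in Proposition~\ref{prop:infinite-singlecross} that site~2 minimises below $p^*$ and site~1 above. The resulting threshold selector is measurable, since it is piecewise constant.

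Optimality then follows from Proposition~\ref{prop:verification}. Its proof used only three ingredients:
\begin{itemize}
\item the bound $V\le V_{\mathrm{ref}}$ from \eqref{eq:Vbound}, valid for all $a,b\in[0,1]$;
\item the truncation bound of Lemma~\ref{lem:truncation}, whose proof never used interiority;
\item the Bellman identity $V=\mathcal TV$ together with the iterate \eqref{eq:verification-iterate}, which carries the zero-continuation convention.
\end{itemize}

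\textbf{Main obstacle.} We expect the main obstacle to be checking that these infinite-horizon ingredients genuinely hold at the boundary, where $F_i$ may be undefined and $\alpha_i=0$. In particular, the Bellman equation must be re-derived with the perspective convention. For that, we would repeat the $\|\mathcal TV_n-\mathcal TV\|\le\|V_n-V\|$ argument, noting that $q_i(p)f(F_i(p))$ is still bounded by $q_i\|f\|$, so the contraction-type estimate survives.
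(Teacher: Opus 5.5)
Your proposal is correct and follows the paper's proof essentially step for step. The interior cases get no reverse strict crossing from the three determinant regimes, which transfers to boundary parameters via Lemma~\ref{lem:closedness}; then come the supremum threshold $p^*$, the Bellman equation $V_\theta=\mathcal T_\theta V_\theta$ from nonexpansiveness and the uniform truncation bound, and optimality by Proposition~\ref{prop:verification}. Your extra argument, that an optimal stationary selector must minimise the Bellman right-hand side so the $\Delta<0$ theorem as stated already rules out a reverse strict crossing, usefully makes explicit a step the paper leaves implicit.
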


\begin{proof}
First suppose
\[
0<a,b<1,
\qquad
0<\alpha_1,\alpha_2<1.
\]
If $\Delta>0$, use Theorem~\ref{thm:positive}; if $\Delta<0$, use
Theorem~\ref{thm:negative}; and if $\Delta=0$, use
Proposition~\ref{prop:zero-determinant}.  Thus every strict-interior
parameter vector has a no-reverse-crossing action differential.

Now let $\theta$ be an arbitrary proper-detection parameter vector,
allowing transition endpoints and $\alpha_i=0$.  Choose strict-interior
vectors $\theta_m\to\theta$.  Each $\theta_m$ belongs to one of the
three determinant regimes already proved, so Lemma~\ref{lem:closedness}
shows that $D_\theta$ has no reverse strict crossing.

Set
\[
p^*:=\sup\{p\in[0,1]:D_\theta(p)>0\},
\]
with $\sup\varnothing=0$.  No reverse strict crossing implies
$D_\theta(p)\ge0$ for $p<p^*$ and $D_\theta(p)\le0$ for $p>p^*$;
zeros can be resolved monotonically.  Hence the Bellman equation has a
threshold minimising selector.

Finally, Lemma~\ref{lem:parameter-continuity} and the nonexpansiveness
of the Bellman operator permit passage to the limit in
$V_{n+1,\theta}=\mathcal T_\theta V_{n,\theta}$, so
$V_\theta=\mathcal T_\theta V_\theta$.
Proposition~\ref{prop:verification}, whose reference-policy bound is
valid for every $\alpha_i<1$, verifies that the threshold selector
attains $V_\theta$.
\end{proof}

\begin{proof}[Proof of Theorem~\ref{thm:ross}]
This is Proposition~\ref{prop:classical-range}.
\end{proof}

We now treat the endpoint $\alpha_i=1$.  These cases are not covered by
the uniform bound \eqref{eq:Vref}, because a search at site $i$ then has
zero probability of detection.  The relevant issue is whether the other,
detectable site can be reached from the ineffective one.

\begin{proposition}[Properness at the overlook endpoints]
\label{prop:endpoint-properness}
The value $V(p)$ is finite for every $p\in[0,1]$ if and only if one of
the following holds:
\begin{enumerate}[label=\textnormal{(\roman*)},leftmargin=2.2em]
\item $\alpha_1<1$ and $\alpha_2<1$;
\item $\alpha_1=1$, $\alpha_2<1$, and $a<1$;
\item $\alpha_2=1$, $\alpha_1<1$, and $b<1$.
\end{enumerate}
\end{proposition}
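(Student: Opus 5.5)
The proof splits into sufficiency and necessity; the cases $\alpha_1=\alpha_2=1$ and the symmetric endpoint will be handled alongside.

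\textbf{Sufficiency.} Case (i) is already covered by the reference-policy bound \eqref{eq:Vbound}: $V\le V_{\mathrm{ref}}<\infty$ for all $a,b\in[0,1]$. For case (ii), take $\alpha_1=1$, $\alpha_2<1$ and $a<1$; case (iii) then follows by relabelling the sites. We use the stationary policy that always searches site~2. Under this policy a target at site~2 is detected with probability $\beta_2=1-\alpha_2>0$ at each visit, while a target at site~1 leaves site~1 at each step with probability $1-a>0$. Consider two consecutive epochs, starting from any location. The probability of detection within them is at least $\min\{\beta_2,(1-a)\beta_2\}=(1-a)\beta_2$: if the target is at site~2 we detect it now with probability $\beta_2$, and if it is at site~1 it moves to site~2 and is then detected with probability at least $(1-a)\beta_2$. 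Hence the number of searches before detection is stochastically dominated by twice a geometric variable. This gives $V(p)\le 2C_2/((1-a)\beta_2)<\infty$ uniformly in $p$. A cheaper alternative is to note that $b$ is irrelevant here.

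\textbf{Necessity.} The conditions fail in three situations, up to symmetry.
\begin{itemize}
\item Both $\alpha_1=\alpha_2=1$. No search ever detects the target, so every sample path has cost $+\infty$, and $V\equiv\infty$.
\item $\alpha_1=1$, $\alpha_2<1$ and $a=1$. Take $p=1$. The target starts at site~1 and, since $a=1$, stays there forever. Searches at site~1 never detect it, and searches at site~2 fail because the target is absent. So detection occurs with probability zero under every policy, including randomised and history-dependent ones. Since the cost of a non-detecting path is $+\infty$ by convention, $V(1)=\infty$.
\item $\alpha_2=1$, $\alpha_1<1$ and $b=1$. This is symmetric, with $p=0$.
\end{itemize}
The only point requiring care is that the infimum in the definition of $V$ ranges over arbitrary policies. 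This is harmless, because each failing event is policy-independent: the target's path is the same almost surely and no observation is ever informative.

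\textbf{Main obstacle.} The only mild obstacle is making the sufficiency bound in (ii) independent of $p$, including at $p=1$ where the first search yields nothing. The two-step block argument above handles this. I would also remark that these conditions are exactly the boundary cases whose Bellman treatment is deferred: under (ii) and (iii) the bound $V_{\mathrm{ref}}$ of \eqref{eq:Vref} is unavailable. The two-step bound serves as the substitute uniform bound for any subsequent truncation argument of the type in Lemma~\ref{lem:truncation}.
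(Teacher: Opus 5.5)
Your proposal is correct and matches the paper's proof essentially step for step. For sufficiency you use the reference-policy bound in (i), and in (ii) the always-search-site-2 policy with two-search block detection probability $(1-a)(1-\alpha_2)$, giving $V\le 2C_2/((1-a)(1-\alpha_2))$, with (iii) as its mirror image; for necessity you use the three policy-independent non-detection scenarios ($\alpha_1=\alpha_2=1$; $\alpha_1=1,a=1$ at $p=1$; $\alpha_2=1,b=1$ at $p=0$), which exhaust the complement. The only blemish is the stray sentence about a ``cheaper alternative''. It is really just the remark that $b$ plays no role in the bound, and it can be dropped.
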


\begin{proof}
Case (i) follows from the reference policy of Section~\ref{sec:infinite}.
For (ii), always search site~2.  Conditional on the hidden state at the
beginning of any block of two searches, the probability of detection
within that block is at least
\begin{equation}
 \varepsilon_2:=(1-a)(1-\alpha_2)>0.
 \label{eq:endpoint-eps2}
\end{equation}
Indeed, if the target starts the block at site~2 it is detected on the
first search with probability $1-\alpha_2\ge\varepsilon_2$; if it
starts at site~1, it moves to site~2 after the first unsuccessful search
with probability $1-a$ and is then detected on the second search with
probability $1-\alpha_2$.  Thus the probability of surviving $2m$
searches is at most $(1-\varepsilon_2)^m$, and
\[
 V(p)\le \frac{2C_2}{\varepsilon_2}
 \qquad\text{for all }p.
\]
Case (iii) is symmetric, with the policy that always searches site~1 and
\[
 \varepsilon_1:=(1-b)(1-\alpha_1)>0.
\]

Conversely, if $\alpha_1=1$ and $a=1$, a target initially at site~1 is
never detectable, so $V(1)=+\infty$.  If $\alpha_2=1$ and $b=1$, then
$V(0)=+\infty$.  If $\alpha_1=\alpha_2=1$, detection is impossible at
either site and the total cost is infinite under every policy.  These
possibilities exhaust the complement of (i)--(iii).
\end{proof}

\begin{proposition}[Threshold optimality when one search is completely ineffective]
\label{prop:endpoint-threshold}
Suppose exactly one overlook probability equals one.
\begin{enumerate}[label=\textnormal{(\roman*)},leftmargin=2.2em]
\item If $\alpha_1=1$, $\alpha_2<1$, and $a<1$, an optimal deterministic
stationary threshold selector exists.
\item If $\alpha_2=1$, $\alpha_1<1$, and $b<1$, an optimal deterministic
stationary threshold selector exists.
\end{enumerate}
\end{proposition}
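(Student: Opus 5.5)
The plan is to reduce Proposition~\ref{prop:endpoint-threshold} to the proper-detection range by the same approximation argument as in Proposition~\ref{prop:classical-range}. The one new ingredient is a replacement for the reference bound \eqref{eq:Vref}, which fails at $\alpha_1=1$. By symmetry (relabel the sites, exchanging $a\leftrightarrow b$, $\alpha_1\leftrightarrow\alpha_2$ and $C_1\leftrightarrow C_2$, which reverses the belief coordinate $p\mapsto1-p$ and maps threshold selectors to threshold selectors) it suffices to treat case~(i): $\alpha_1=1$, $\alpha_2<1$, $a<1$.

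\textbf{Step 1: a locally uniform bound.} Fix such a $\theta_0$ and choose a neighbourhood $\mathcal N$ in which
\[
a\le\bar a<1,\qquad \alpha_2\le\bar\alpha<1,\qquad 0<\underline C\le C_i\le\overline C,
\]
with $\alpha_1\in[0,1]$ arbitrary. The policy ``always search site~2'' does not depend on $\alpha_1$. The two-search block argument in the proof of Proposition~\ref{prop:endpoint-properness} gives a per-block detection probability of at least
\[
\underline\varepsilon:=(1-\bar a)(1-\bar\alpha)>0.
\]
Hence $V_\theta,V_{n,\theta}\le\overline V:=2\overline C/\underline\varepsilon$ uniformly on $\mathcal N\times[0,1]$. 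I will then rerun the proof of Lemma~\ref{lem:truncation} with this reference policy in place of the ``search the likelier site'' policy. It used only three facts: that a bounded reference continuation exists, that each search costs at least $\underline C$, and Markov's inequality. This yields
\[
0\le V_\theta-V_{n,\theta}\le\overline V^{\,2}/(n\underline C)
\]
uniformly on $\mathcal N$. The finite-horizon representation in Proposition~\ref{prop:open-loop} is still valid at $\alpha_1=1$, since word costs are polynomial in the parameters. Consequently the conclusions of Lemmas~\ref{lem:parameter-continuity}, \ref{lem:D-parameter-continuity} and~\ref{lem:closedness} extend to $\mathcal N$, including the perspective convention $q_i V(F_i)=0$ when $q_i=0$ (here $q_1\equiv1$, so only $q_2$ can vanish).

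\textbf{Step 2: no reverse crossing and a threshold.} Choose $\theta_m\to\theta_0$ inside $\mathcal N$ with $\alpha_{1,m}<1$. Each $D_{\theta_m}$ has no reverse strict crossing by Proposition~\ref{prop:classical-range}, which applies because those parameters lie in the range $\alpha_i<1$. By the extended closedness lemma, $D_{\theta_0}$ has no reverse strict crossing either. Defining $p^*:=\sup\{p:D_{\theta_0}(p)>0\}$ and breaking ties monotonically, exactly as before, gives a threshold minimising selector for the Bellman right-hand side.

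\textbf{Step 3: Bellman equation and verification.} First, uniform convergence $V_{n,\theta_0}\to V_{\theta_0}$ together with nonexpansiveness of $\mathcal T$ (which uses only $0\le q_i\le1$) gives $V_{\theta_0}=\mathcal T V_{\theta_0}$. Second, the proof of Proposition~\ref{prop:verification} goes through verbatim with $V_{\mathrm{ref}}$ replaced by $\overline V$. It uses only that $V$ is bounded, that the iterated identity \eqref{eq:verification-iterate} holds, and that every search costs at least $C_{\min}$. Detectability of site~1 is never needed there.

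\textbf{Main obstacle.} I expect the delicate point to be Step~1: making sure every place where the earlier continuity and truncation lemmas invoked $\beta_{\min}>0$ really only needs some uniform bound on $V$. That bound must hold simultaneously for the approximating $\theta_m$ and for the limit, so I must be careful that $a$ stays bounded away from $1$ along the approximating sequence. A second check is that approximating parameters may sit on other boundaries, such as $a=0$ or $\alpha_2=0$. This is harmless, since Proposition~\ref{prop:classical-range} already covers them.
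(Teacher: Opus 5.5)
Your proposal is correct and follows essentially the same route as the paper. Both arguments approximate $\theta_0$ by parameters with $\alpha_1<1$. Both replace the reference bound by the $\alpha_1$-independent always-search-site-2 bound from Proposition~\ref{prop:endpoint-properness} and rerun Lemma~\ref{lem:truncation} to obtain locally uniform convergence, hence continuity of $D$ and the Bellman equation at $\theta_0$. Both then pass the no-reverse-crossing property to the limit and finish with the verification argument. The only cosmetic difference is that the paper varies $\alpha_1$ alone with all other parameters fixed, so it needs no uniformity in $a$; your neighbourhood with $a\le\bar a<1$ simply allows a more general approximating sequence at no extra cost.
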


\begin{proof}
We prove (i); the other case is symmetric.  Let
$\theta_0=(a,b,1,\alpha_2,C_1,C_2)$ with $a<1$ and
$\alpha_2<1$, and approximate it by parameter vectors $\theta_m$ for
which $\alpha_1^{(m)}<1$ and $\alpha_1^{(m)}\uparrow1$, keeping all
other parameters fixed.  Write $V_\theta$ and $V_{n,\theta}$ for the
infinite- and finite-horizon value functions at parameter vector
$\theta$, and let $D_\theta$ denote the corresponding infinite-horizon
action differential whenever the value is finite.  By
Proposition~\ref{prop:classical-range},
every $\theta_m$ has the no-reverse-crossing property for its
infinite-horizon action differential.

The policy that always searches site~2 has the two-search detection
bound \eqref{eq:endpoint-eps2}, which does not depend on $\alpha_1$.
Hence in a neighbourhood of $\theta_0$ the infinite-horizon values are
uniformly bounded by the common constant
$\overline V_2:=2C_2/\varepsilon_2$.  Repeating the proof of
Lemma~\ref{lem:truncation} with this bound gives
\[
 0\le V_{\theta}(p)-V_{n,\theta}(p)
 \le \frac{\overline V_2^{\,2}}{nC_{\min}}
\]
uniformly in $p$ and in that neighbourhood.  Since the finite-horizon
branch values are continuous in the parameters, the infinite-horizon
branch values and their difference are therefore continuous at
$\theta_0$.  The same uniform convergence permits passage to the limit
in the finite-horizon Bellman recursion, so
$V_{\theta_0}=\mathcal T_{\theta_0}V_{\theta_0}$.  Here
$\mathcal T_{\theta}$ denotes the Bellman operator
\eqref{eq:bellman-operator} with parameter vector $\theta$.

If the endpoint differential had a reverse strict pair
$D_{\theta_0}(p)<0<D_{\theta_0}(p')$ with $p<p'$, the same strict pair
would occur for all sufficiently large $m$, contradicting
Proposition~\ref{prop:classical-range}.  Thus monotone tie breaking gives
a threshold minimiser of the Bellman equation.  The same bound
$\overline V_2$ and the verification argument of
Proposition~\ref{prop:verification} show that this stationary selector
attains $V_{\theta_0}$.
\end{proof}

Proposition~\ref{prop:endpoint-properness} also identifies the endpoint
cases in which no policy has finite expected total cost from every
initial belief.  We therefore state the endpoint extension for those
parameter vectors whose value is finite from every initial belief.

\begin{theorem}[Endpoint extension]
\label{thm:endpoint-extension}
For every parameter vector with $a,b,\alpha_1,\alpha_2\in[0,1]$ and
$C_1,C_2>0$ such that $V(p)<\infty$ for every $p\in[0,1]$, there exist
$p^*\in[0,1]$ and a deterministic stationary selector $\mu$ such that
\[
 p<p^*\Longrightarrow \mu(p)=2,
 \qquad
 p>p^*\Longrightarrow \mu(p)=1.
\]
For every initial belief $p$, the expected total cost under $\mu$ equals
$V(p)$ and is finite.
\end{theorem}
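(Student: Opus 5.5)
The plan is to reduce Theorem~\ref{thm:endpoint-extension} to the cases already settled, using Proposition~\ref{prop:endpoint-properness} to sort the parameter vectors by properness.

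If $V(p)<\infty$ for every $p$, that proposition says exactly one of three alternatives holds: (i) $\alpha_1,\alpha_2<1$; (ii) $\alpha_1=1$, $\alpha_2<1$, $a<1$; or (iii) the mirror case $\alpha_2=1$, $\alpha_1<1$, $b<1$. In particular, if both overlook probabilities equal one, or if the single ineffective site is absorbing, then $V$ is infinite somewhere, so there is nothing to prove. The theorem is therefore a union of already proved statements, and the proof is bookkeeping.

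\textbf{Case (i).} Apply Proposition~\ref{prop:classical-range}. It covers every $a,b\in[0,1]$ (including the transition endpoints) and every $\alpha_i\in[0,1)$, including perfect detection. It gives a threshold selector $\mu$ minimising the Bellman right-hand side. Proposition~\ref{prop:verification}, with the reference bound \eqref{eq:Vref}, shows that the policy generated by $\mu$ has expected total cost $V(p)\le V_{\mathrm{ref}}<\infty$.

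\textbf{Cases (ii) and (iii).} Apply the corresponding part of Proposition~\ref{prop:endpoint-threshold}. Its proof gives a threshold minimiser of $V_{\theta_0}=\mathcal T_{\theta_0}V_{\theta_0}$ and verifies optimality via the bound $\overline V_2=2C_2/\varepsilon_2$ (respectively $2C_1/\varepsilon_1$). This also gives finiteness of the attained cost.

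The one point needing care is the endpoint convention in Bellman's equation. When $q_i(p)=0$, for instance at $p=1$ with $\alpha_1=0$, or at $p=0$ with $\alpha_2=0$, the belief $F_i(p)$ is undefined. The continuation term is then read as zero, as in the convention after \eqref{eq:F1}--\eqref{eq:F2}.

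I expect this to be the only step that needs checking. We must confirm that the verification argument, in particular identity \eqref{eq:verification-iterate}, remains valid under this convention. It does, because the factor $\rho(p)A_w\1$ vanishes exactly when the continuation term is set to zero. With that, the selector $\mu$ is defined at every $p\in[0,1]$, including $p^*$ and the endpoints, and its cost equals $V(p)$ everywhere.

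Combining the three cases proves the theorem.
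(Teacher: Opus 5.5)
Your proposal is correct and matches the paper's proof: it uses Proposition~\ref{prop:endpoint-properness} to reduce to the proper-detection case, handled by Proposition~\ref{prop:classical-range}, and to the two cases with one non-absorbing ineffective site, handled by Proposition~\ref{prop:endpoint-threshold}. Your extra check of the $q_i(p)=0$ convention in \eqref{eq:verification-iterate} agrees with the remark the paper already makes inside the proof of Proposition~\ref{prop:verification}.
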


\begin{proof}
If $\alpha_1,\alpha_2<1$, use
Proposition~\ref{prop:classical-range}.  If exactly one overlook probability equals
one, Proposition~\ref{prop:endpoint-properness} implies that the
corresponding ineffective site is not absorbing, so
Proposition~\ref{prop:endpoint-threshold} applies.  By
Proposition~\ref{prop:endpoint-properness}, these cases exhaust the
parameter vectors for which $V(p)<\infty$ for every $p\in[0,1]$.
\end{proof}

\section{Discussion}
\label{sec:discussion}

The proofs use two complementary order mechanisms.  In the
positive-determinant regime, a common projective separator orders
height-one pairs of search words.  In the negative-determinant regime,
the relevant order is instead an ordering of slopes among the affine
Bellman branches associated with the cobweb intervals.  Several features
of both arguments may be useful beyond the present conjecture.

\subsection{Two determinant signs, two order mechanisms}

When $0<a,b<1$ and $0<\alpha_1,\alpha_2<1$, the determinant sign governs
the orientation of every failed-search belief update.  Indeed,
\[
\det A_i=\alpha_i\Delta,
\]
the search update is a translation in log odds, and the derivative of
the subsequent movement map has the sign of $\Delta$.

When $\Delta>0$, order is preserved.  The products $A_u$ retain positive
determinant, their projective cones have a common orientation, and the
Bellman coupling produces height-one word pairs.  The key certificate is
therefore a projective separator established independently of whether
the individual words are optimal.

When $\Delta<0$, every failed-search belief update reverses order.  The
orientation of the products $A_u$ alternates with word length, and the
separator from the positive-determinant regime no longer applies.  The switching points of
the piecewise-affine Bellman solution instead form alternating cobwebs:
starting from one central affine crossing, successive switching points
are obtained by taking inverse images under the failed-search maps.  In
the cobweb verification, the two-step slope comparison of
Lemma~\ref{lem:negative-flattening} shows that the excess cost of the
competing action is decreasing on the low-belief side and increasing on
the high-belief side.  Since both excess costs vanish at the central
switching point, the competing action cannot become cheaper on the
corresponding side.  Thus the negative-determinant certificate is an ordering
of slopes among the affine Bellman branches on the cobweb intervals.

$\Delta=0$ is the boundary between the two geometries.  Both
failed-search maps send every prior to the same post-movement belief, so
the continuation value is common to the two actions and the action
differential is affine.

The transition and detection boundaries require no additional order
mechanism: the strict-interior conclusions pass to the relevant boundary
values by continuity.

\subsection{Why the projective separator is the right invariant}

A first instinct is to compare $R_H-R_L$ entrywise for a pair of words.
That is too strong: after several excursions, individual entries of this
matrix difference can have either sign.  The useful invariant is instead the
existence of one positive projective parameter $t$ for which
\[
 (1,-t)(R_H-R_L)\le0.
\]
The inequality is componentwise.  If the same $t$ gives this inequality
for every increment in a sequence of adjacent moves and a possible
replacement, then summing those inequalities gives it for the total
difference $R_H-R_L$.  The nested intervals $\cI(A_u)$ make it possible
to use the same $t$ for all the operations in the sequence.

For a positive $2\times2$ matrix $B$, the interval $\cI(B)$ is the
interval between the projective images of the two coordinate rays under
$B$.
Because $\det B>0$, their order is preserved.  Multiplication by another
positive matrix sends each new coordinate ray into the cone spanned by
the old two, which explains the nesting geometrically.  This use of
positive matrices through their action on rays is related to Birkhoff's
use of Hilbert's projective metric for
positive operators~\cite{birkhoff1957}, although no projective metric is
used here.  The
algebraic factorisation \eqref{eq:bubble-factor} then turns this
projective order into a cost comparison.

\subsection{Height-one paths as the dynamic admissibility constraint}

Not every pair of search words has the single-crossing property.  The
Bellman recursion imposes an additional dynamic admissibility
constraint.  In the positive-determinant regime the movement update is
an increasing contraction in log odds.  After two trajectories are
forced to take opposite first actions, their log odds after movement
differ by less than the combined search shift $\Lambda$.  At every
later epoch they use the same monotone stage selector.  If their actions
differ, that selector assigns site~2 to the lower-belief trajectory and
site~1 to the higher-belief trajectory; the resulting search shifts
reverse the belief order and change the count discrepancy from one to
zero or from zero to one.  The discrepancy is consequently trapped
between these two values.

Combinatorially, a height-one pair is a lattice path on a two-level
ladder.  Binary words are standard objects in combinatorics on
words~\cite{lothaire1997}, while their encoding by lattice paths is
classical~\cite[Section~1.2]{stanley2011}.  Adjacent inversion swaps are
also fundamental in weak-order
constructions~\cite[Chapter~3]{bjornerbrenti2005}.  What is special here
is not the existence of local adjacent moves, but the stronger conclusion
of Lemma~\ref{lem:chain}:
for a two-level discrepancy path, the left prefixes of all required
adjacent moves can be chosen to be nested.  This is exactly the
condition needed by projective interval nesting.

There is a related line of work in stochastic control in which symbolic
itineraries are analysed using combinatorics on words.  Dance and
Silander~\cite{dance2015} used mechanical words in a Kalman-filter
restless-bandit problem to prove indexability, and their later
treatment~\cite{dance2019} relates threshold-policy itineraries to
Christoffel and other mechanical words in costly-observation control
problems.
The similarity is that a threshold policy converts a one-dimensional
controlled dynamical system into a binary word.  The role of the word
structure is different here, however.  We compare two finite action
sequences generated by different first actions along histories in which
every search is unsuccessful.  The height-one condition controls their
\emph{relative} prefix counts; this relative constraint is what makes
the common projective separator possible.

\subsection{Relation to structural POMDP theory}

The problem fits naturally into the structural theory of POMDPs.
Lovejoy~\cite{lovejoy1987} gives likelihood-ratio monotonicity results,
while Krishnamurthy~\cite[Chapters~10--12]{krishnamurthy2016} develops
the roles of total positivity, monotone likelihood-ratio order and
submodularity in obtaining monotone value functions and policies.  The present
proof uses the same broad order-theoretic language but at a different
level.  Rather than verifying one-step supermodularity of the Bellman
operator, it applies total positivity to matrix products associated with
entire sequences of unsuccessful searches.  It then uses the
height-one restriction on pairs of search words generated when both
trajectories follow the same monotone stage selectors.  In this sense
Theorem~\ref{thm:common-separator} is a multi-step single-crossing
certificate.

\subsection{Ties and the meaning of the conjecture}

A useful conceptual point is that the conjecture is a statement about
the existence of a threshold \emph{selector}.  It does not require the
action differential $D$ to be strictly decreasing, to have a unique
zero, or to cross transversally.  The proof establishes the ordering
property needed for such a selector: there is no pair $p<p'$ with
site~1 strictly better at $p$ and site~2 strictly better at $p'$.  Ties
can then be assigned to site~2 below a chosen threshold and to site~1
above it.  This weaker formulation is both robust under parameter limits
and sufficient for optimal control.

\subsection{Possible extensions}

Theorem~\ref{thm:common-separator} is intrinsically two-dimensional: the
projective image of the positive cone is an interval, and total
positivity gives a complete order of its boundary rays.  For three or
more target sites one should expect a higher-dimensional cone rather
than a scalar projective interval, while the word discrepancy becomes a
path in a higher-dimensional lattice.  It would be interesting to know
whether an analogue based on nested simplicial cones can yield useful
partial monotonicity results for multi-site moving-target search.
Another direction is to identify other controlled-sensing POMDPs in
which the two search words generated by forcing different current
actions and then applying the same stage selectors satisfy a
bounded-width lattice condition analogous to \eqref{eq:heightone}; the
same combination of adjacent moves and separators may then provide
threshold results outside the standard one-step supermodularity
framework.

\section*{Acknowledgements}
OpenAI's ChatGPT (GPT-5.6 Sol) was used to assist with
discovering and developing the proof, preparing an initial draft, and
refining the wording.  The author independently verified the
mathematical arguments and references and takes full responsibility for
the final content.

\bibliographystyle{emss}
\bibliography{references}

\appendix
\section{Detailed cobweb proof for the negative-determinant regime}
\label{app:negative}

This appendix proves Theorem~\ref{thm:negative} under
\eqref{eq:negative-strict} by reorganising the cobweb proof of MacPhee
and Jordan~\cite[Section~4]{macphee1995}.  Here a \emph{cobweb} is the
finite collection of belief intervals obtained by repeatedly taking
inverse images of a central
crossing point under the two failed-search maps; the intervals alternate
between the two sides of the threshold.

\subsection{Affine Bellman calculus}

For a function $f:[0,1]\to\R$, define the individual branch operators
\begin{equation}
(\mathcal B_i f)(p)
:=
C_i+q_i(p)f(F_i(p)),
\qquad i=1,2.
\label{eq:negative-Bi}
\end{equation}
Thus $\mathcal T f=\min\{\mathcal B_1f,\mathcal B_2f\}$.  Powers of
$\mathcal B_i$ denote composition.  When $f$ is affine, write
\begin{equation}
m(f):=f(1)-f(0)
\label{eq:negative-slope}
\end{equation}
for its slope.  Recall $\beta_i=1-\alpha_i$ and set
\begin{equation}
\vartheta_1:=\alpha_1a+b-1,
\qquad
\vartheta_2:=a+\alpha_2b-1.
\label{eq:negative-vartheta}
\end{equation}
Under \eqref{eq:negative-strict},
\begin{equation}
-1<\vartheta_1<0,
\qquad
-1<\vartheta_2<0.
\label{eq:negative-vartheta-range}
\end{equation}

\begin{lemma}[Affine slope identities]
\label{lem:negative-affine}
If $f$ is affine, then
\begin{align}
m(\mathcal B_1f)
&=
\vartheta_1m(f)-\beta_1f(0),
\label{eq:negative-B1-slope}\\
m(\mathcal B_2f)
&=
\vartheta_2m(f)+\beta_2f(1).
\label{eq:negative-B2-slope}
\end{align}
Consequently, if $f(p)>0$ for every $p\in[0,1]$,
\begin{align}
m(f)\ge0
&\Longrightarrow
m(\mathcal B_1f)<0,
\label{eq:negative-sign1}\\
m(f)\le0
&\Longrightarrow
m(\mathcal B_2f)>0.
\label{eq:negative-sign2}
\end{align}
Under the same assumption that $f$ is strictly positive on $[0,1]$,
the two branches with a common continuation $f$ satisfy
\begin{equation}
m(\mathcal B_1f-\mathcal B_2f)
=
-\beta_1f(a)-\beta_2f(1-b)<0.
\label{eq:negative-common-cont}
\end{equation}
\end{lemma}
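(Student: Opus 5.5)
The plan is a direct computation: write an affine $f$ as $f(p)=f(0)+m(f)p$ and evaluate $\mathcal B_if$ at the endpoints $p=0$ and $p=1$. Although $f\circ F_i$ is a linear-fractional function, the perspective product $q_i(p)f(F_i(p))$ is affine in $p$. Indeed, the numerator of $F_i$ is exactly $q_i(p)F_i(p)$, so
\[
q_i(p)f(F_i(p))=f(0)q_i(p)+m(f)\,q_i(p)F_i(p),
\]
and both $q_i$ and $q_iF_i$ are affine by \eqref{eq:q}--\eqref{eq:F2}. Hence $\mathcal B_if$ is affine, and its slope is the difference of its values at $1$ and $0$.

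For $i=1$ the endpoint values are as follows.
\begin{itemize}
\item At $p=0$: $q_1=1$ and $F_1=1-b$, so the continuation term is $f(1-b)=f(0)+m(f)(1-b)$.
\item At $p=1$: $q_1=\alpha_1$ and $q_1F_1=\alpha_1a$, so the continuation term is $\alpha_1f(0)+m(f)\alpha_1a$.
\end{itemize}
Subtracting, the constants $C_1$ cancel and
\[
m(\mathcal B_1f)=m(f)\bigl(\alpha_1a-(1-b)\bigr)+(\alpha_1-1)f(0),
\]
which is \eqref{eq:negative-B1-slope}.

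For $i=2$ the endpoint values are as follows.
\begin{itemize}
\item At $p=0$: $q_2=\alpha_2$ and $q_2F_2=\alpha_2(1-b)$.
\item At $p=1$: $q_2=1$ and $F_2=a$.
\end{itemize}
This gives
\[
m(\mathcal B_2f)=\beta_2f(0)+m(f)\bigl(a-\alpha_2(1-b)\bigr).
\]
Writing $f(0)=f(1)-m(f)$, this becomes $\beta_2f(1)+m(f)(a-\alpha_2(1-b)-\beta_2)$. The coefficient equals $a+\alpha_2b-1=\vartheta_2$, which is \eqref{eq:negative-B2-slope}.

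The range claim \eqref{eq:negative-vartheta-range} is also needed. Since $a+b<1$ and $\alpha_i<1$, we get $\vartheta_1<a+b-1<0$. Since $\alpha_1a\ge0$ and $b>0$, we get $\vartheta_1>-1$. The argument for $\vartheta_2$ is symmetric.

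The sign consequences follow at once. If $f>0$ and $m(f)\ge0$, then $\vartheta_1m(f)\le0$ and $-\beta_1f(0)<0$, so $m(\mathcal B_1f)<0$, which is \eqref{eq:negative-sign1}. Similarly, $m(f)\le0$ gives $\vartheta_2m(f)\ge0$ and $\beta_2f(1)>0$, which is \eqref{eq:negative-sign2}.

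For the common-continuation identity \eqref{eq:negative-common-cont}, subtract the two slope formulas:
\[
m(\mathcal B_1f-\mathcal B_2f)=(\vartheta_1-\vartheta_2)m(f)-\beta_1f(0)-\beta_2f(1).
\]
Here $\vartheta_1-\vartheta_2=-\beta_1a+\beta_2b$. Regroup using affinity:
\[
-\beta_1\bigl(f(0)+am(f)\bigr)=-\beta_1f(a),
\qquad
-\beta_2\bigl(f(1)-bm(f)\bigr)=-\beta_2f(1-b).
\]
Positivity of $f$ and of the $\beta_i$ gives the strict sign.

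No step is a real obstacle. The only point requiring care is the perspective observation that makes $\mathcal B_if$ affine, together with the bookkeeping in the endpoint evaluations.
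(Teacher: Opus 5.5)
Your proposal is correct and follows essentially the same route as the paper. Both write $f(p)=f(0)+m(f)p$, use that $q_i$ and $q_iF_i$ are affine to read off the slope (you by evaluating at the endpoints, the paper by taking the coefficient of $p$), and then subtract and regroup using $\vartheta_1-\vartheta_2=-\beta_1a+\beta_2b$. Your explicit verification of \eqref{eq:negative-vartheta-range} supplies a step the paper only asserts.
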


\begin{proof}
Write $f(p)=f(0)+m(f)p$.  Since
\[
q_1(p)F_1(p)
=(1-b)+\bigl(\alpha_1a-(1-b)\bigr)p
\]
and
\[
q_2(p)F_2(p)
=\alpha_2(1-b)+\bigl(a-\alpha_2(1-b)\bigr)p,
\]
the coefficients of $p$ in the two branch costs are
\[
-\beta_1f(0)+\vartheta_1m(f)
\quad\text{and}\quad
\vartheta_2m(f)+\beta_2f(1),
\]
which proves \eqref{eq:negative-B1-slope}--\eqref{eq:negative-B2-slope}.
The sign implications follow from \eqref{eq:negative-vartheta-range}
and positivity of $f$.

Subtracting the two slope identities and using
$\vartheta_1-\vartheta_2=-\beta_1a+\beta_2b$ gives
\[
m(\mathcal B_1f-\mathcal B_2f)
=-\beta_1\bigl[f(0)+a m(f)\bigr]
 -\beta_2\bigl[f(0)+(1-b)m(f)\bigr],
\]
which is \eqref{eq:negative-common-cont}.
\end{proof}

Equation \eqref{eq:negative-common-cont} has a direct policy
interpretation.  Fix an affine cost-to-go function $f$ to be used after
the current search fails and movement occurs, regardless of which site
is searched now.  Then $\mathcal B_1f-\mathcal B_2f$ is the difference
between the two current-action costs under this common future rule, and
its slope is strictly negative.  The cobweb verification will also
compare $f$ with $\mathcal B_i^2f$, the cost of searching site~$i$ at
two successive epochs and then using continuation cost $f$.  The next
lemma gives the required slope comparisons.

\begin{lemma}[Two-step slope flattening]
\label{lem:negative-flattening}
Let $f$ be affine and strictly positive on $[0,1]$.
\begin{enumerate}[label=\textnormal{(\roman*)},leftmargin=2.2em]
\item If $m(f)>0$, then
\begin{equation}
m(\mathcal B_1^2f)<m(f).
\label{eq:negative-flatten1}
\end{equation}
\item If $m(f)<0$, then
\begin{equation}
m(\mathcal B_2^2f)>m(f).
\label{eq:negative-flatten2}
\end{equation}
\end{enumerate}
\end{lemma}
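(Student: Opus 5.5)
The plan is to iterate the affine slope identities of Lemma~\ref{lem:negative-affine} twice and compare the result with $m(f)$ directly. Two preliminary facts are needed. First, $\mathcal B_if$ is again affine: the products $q_i(p)F_i(p)$ and $q_i(p)$ are affine, so $q_i(p)f(F_i(p))$ is affine in $p$. Second, $\mathcal B_if$ is strictly positive, since $\mathcal B_if\ge C_i>0$ when $f>0$. Hence Lemma~\ref{lem:negative-affine} applies again to $\mathcal B_if$. I also need the endpoint values
\[
(\mathcal B_1f)(0)=C_1+f(1-b),\qquad (\mathcal B_2f)(1)=C_2+f(a),
\]
which hold because $q_1(0)=1$, $F_1(0)=1-b$, $q_2(1)=1$ and $F_2(1)=a$.

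For (i), apply \eqref{eq:negative-B1-slope} twice and write $m=m(f)$:
\[
m(\mathcal B_1^2f)=\vartheta_1\bigl(\vartheta_1m-\beta_1f(0)\bigr)-\beta_1\bigl(C_1+f(1-b)\bigr)
=\vartheta_1^2m+\beta_1\bigl[-\vartheta_1f(0)-f(1-b)-C_1\bigr].
\]
By \eqref{eq:negative-vartheta-range}, $\vartheta_1^2<1$, so $m>0$ gives $\vartheta_1^2m<m$. It remains to show that the bracket is negative. Since $0<-\vartheta_1<1$ and $f(0)>0$, we have $-\vartheta_1f(0)<f(0)$. Since $m>0$, we have $f(1-b)=f(0)+(1-b)m\ge f(0)$. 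Together these give $-\vartheta_1f(0)-f(1-b)<0$, and subtracting $C_1>0$ keeps the bracket negative. Multiplying by $\beta_1>0$ yields \eqref{eq:negative-flatten1}.

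For (ii), the argument is symmetric using \eqref{eq:negative-B2-slope}:
\[
m(\mathcal B_2^2f)=\vartheta_2^2m+\beta_2\bigl[\vartheta_2f(1)+f(a)+C_2\bigr].
\]
Here $m<0$ and $\vartheta_2^2<1$ give $\vartheta_2^2m>m$. In the bracket, $\vartheta_2f(1)>-f(1)$ because $f(1)>0$. Also $f(a)=f(1)-(1-a)m\ge f(1)$ because $m<0$. So the bracket exceeds $C_2>0$, which proves \eqref{eq:negative-flatten2}.

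There is no genuine obstacle. The only point needing care is the sign bookkeeping in the bracket. The cancellation uses $|\vartheta_i|<1$ together with the monotonicity of $f$, which places $f(1-b)$ above $f(0)$ and $f(a)$ above $f(1)$.
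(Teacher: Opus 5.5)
Your proposal is correct and follows essentially the same route as the paper: apply the affine slope identity twice, use $\vartheta_i^2<1$, and show the remaining bracket has the right sign. The only difference is cosmetic. The paper expands $f(1-b)=f(0)+(1-b)m(f)$ and $f(a)=f(1)-(1-a)m(f)$ to get brackets whose terms, such as $(\alpha_1a+b)f(0)=(1+\vartheta_1)f(0)$, are each visibly positive. You instead keep $f(1-b)$ and $f(a)$ and compare them with $f(0)$ and $f(1)$ via monotonicity of $f$, which amounts to the same estimate.
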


\begin{proof}
Applying \eqref{eq:negative-B1-slope} twice gives
\begin{equation}
m(\mathcal B_1^2f)
=
\vartheta_1^2m(f)
-\beta_1\!
\left[
C_1+(1-b)m(f)+(\alpha_1a+b)f(0)
\right].
\label{eq:negative-B11}
\end{equation}
If $m(f)>0$, the bracket is strictly positive and
$\vartheta_1^2<1$, proving \eqref{eq:negative-flatten1}.

Similarly,
\begin{equation}
m(\mathcal B_2^2f)
=
\vartheta_2^2m(f)
+\beta_2\!
\left[
C_2-(1-a)m(f)+(a+\alpha_2b)f(1)
\right].
\label{eq:negative-B22}
\end{equation}
If $m(f)<0$, the bracket is strictly positive and
$(\vartheta_2^2-1)m(f)>0$, proving \eqref{eq:negative-flatten2}.
\end{proof}

\subsection{Orientation reversal and finite cobwebs}

In log odds,
\[
\eta:=\operatorname{logit}(p):=\log\frac{p}{1-p},
\]
the movement step is the same map as in Section~\ref{sec:finite},
\[
\Phi(\eta)
=\log\frac{ae^\eta+1-b}{(1-a)e^\eta+b}.
\]

\begin{lemma}[Orientation-reversing contraction]
\label{lem:negative-contraction}
Under \eqref{eq:negative-strict},
\begin{equation}
-1<\Phi'(\eta)<0
\qquad(\eta\in\R).
\label{eq:negative-Phi}
\end{equation}
The maps $F_1,F_2$ are strictly decreasing contractions in log odds,
\begin{equation}
F_i([0,1])=[a,1-b],
\label{eq:negative-range}
\end{equation}
and
\begin{equation}
F_2(p)<F_1(p)
\qquad(0<p<1).
\label{eq:negative-F-order}
\end{equation}
Each $F_i$ has a unique fixed point $P_i\in(0,1)$, and
\begin{equation}
P_2<P_1.
\label{eq:negative-P-order}
\end{equation}
\end{lemma}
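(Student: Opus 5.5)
The plan is to read everything off the log-odds representation, using the computation of $\Phi'$ already made in Lemma~\ref{lem:Phi}. Formula \eqref{eq:Phi-prime} holds for any $0<a,b<1$:
\[
\Phi'(\eta)=\frac{\Delta x}{(ax+1-b)((1-a)x+b)},\qquad x=e^\eta .
\]
With $\Delta<0$ the numerator is negative, so $\Phi'<0$. For $\Phi'>-1$ we need the denominator plus $\Delta x$ to be positive. Expanding,
\[
(ax+1-b)((1-a)x+b)+\Delta x=a(1-a)x^2+\bigl(ab+(1-a)(1-b)+a+b-1\bigr)x+b(1-b),
\]
and the middle coefficient simplifies to $2ab>0$. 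Hence the sum is positive and \eqref{eq:negative-Phi} follows. Since $F_i=\operatorname{logit}^{-1}\circ\Phi\circ(\text{shift by }-\ell_1\text{ or }+\ell_2)\circ\operatorname{logit}$ on $(0,1)$, and the shifts are isometries, each $F_i$ is a strictly decreasing strict contraction in log odds.

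For the range \eqref{eq:negative-range}, I will evaluate the formulas directly: $F_1(0)=F_2(0)=1-b$ and $F_1(1)=F_2(1)=a$. Because $\Delta<0$ we have $a<1-b$, and monotonicity plus continuity give $F_i([0,1])=[a,1-b]$.

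For the order \eqref{eq:negative-F-order}, fix $0<p<1$. The pre-movement log odds after search~2 exceed those after search~1 by $\Lambda>0$. Since $\Phi$ is strictly decreasing, the post-movement log odds satisfy the reverse inequality, which gives $F_2(p)<F_1(p)$.

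For the fixed points, $F_i$ maps $[0,1]$ into $[a,1-b]\subset(0,1)$ and is continuous, so it has a fixed point in $(0,1)$. The fixed point is unique because $p\mapsto F_i(p)-p$ is strictly decreasing; alternatively, the log-odds strict contraction gives uniqueness on $(0,1)$, and the endpoints are not fixed. For $P_2<P_1$, note that $F_2(P_1)<F_1(P_1)=P_1$. Since $g(p)=F_2(p)-p$ is strictly decreasing with its unique zero at $P_2$, the inequality $g(P_1)<0$ forces $P_1>P_2$.

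The only step needing care is the algebra for $\Phi'>-1$, which is a single identity; everything else follows from monotonicity.
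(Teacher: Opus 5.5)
Your proposal is correct and follows the paper's route. It uses the same derivative identity (the paper's version, after subtracting $(1-a-b)x$, also has middle coefficient $2ab$), the same endpoint evaluations $F_i(0)=1-b$ and $F_i(1)=a$, the same fixed-point argument via strict decrease of $F_i(p)-p$, and the same comparison $F_2(P_1)<F_1(P_1)=P_1$. The only cosmetic difference is that you get $F_2<F_1$ from the log-odds gap $\Lambda$ and the decrease of $\Phi$, whereas the paper compares the pre-movement posteriors and applies the affine movement map of slope $\Delta<0$.

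One small addition is needed. Pointwise $|\Phi'|<1$ gives only a non-uniform strict contraction, not a global constant $\kappa_\Phi<1$. The paper extracts that constant here and relies on it in Lemma~\ref{lem:negative-finite-cobweb}. To get it, note that $\Phi'(\eta)\to0$ as $\eta\to\pm\infty$, so the continuous function $|\Phi'|$ attains a maximum strictly below $1$.
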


\begin{proof}
With $x=e^\eta$,
\[
\Phi'(\eta)
=\frac{\Delta x}
{(ax+1-b)((1-a)x+b)}.
\]
The derivative is negative.  Moreover,
\begin{align*}
&(ax+1-b)((1-a)x+b)-(1-a-b)x\\
&\qquad
=a(1-a)x^2+2abx+b(1-b)>0,
\end{align*}
so $|\Phi'(\eta)|<1$.  Since this derivative is continuous and tends
to zero as $\eta\to\pm\infty$, its absolute value has a global maximum
$\kappa_\Phi<1$.  To apply this bound to $F_1$ and $F_2$, take
$p,p'\in(0,1)$ and write
$\eta=\operatorname{logit}(p)$ and
$\eta'=\operatorname{logit}(p')$.  With
$\ell_i=-\log\alpha_i$ as in \eqref{eq:logodds}, the search shifts
\eqref{eq:search-shift} give
\[
 \operatorname{logit}(F_1(p))=\Phi(\eta-\ell_1),
 \qquad
 \operatorname{logit}(F_2(p))=\Phi(\eta+\ell_2).
\]
Both right-hand sides are strictly decreasing functions of $\eta$.
Moreover, the mean value theorem and the definition of $\kappa_\Phi$ give,
for $i=1,2$,
\[
 \left|
 \operatorname{logit}(F_i(p))
 -\operatorname{logit}(F_i(p'))
 \right|
 \le \kappa_\Phi|\eta-\eta'|.
\]
Thus $F_1$ and $F_2$ are strictly decreasing contractions when beliefs
are measured in log odds.

To compare the two maps, fix $p\in(0,1)$.  Conditional on an unsuccessful
search of site~1, the posterior probability that the target is at
site~1, immediately after the search and before movement, is
\[
 \frac{\alpha_1p}{\alpha_1p+1-p}.
\]
Conditional on an unsuccessful search of site~2, the corresponding
posterior probability is
\[
 \frac{p}{p+\alpha_2(1-p)}.
\]
Because $0<\alpha_1,\alpha_2<1$, these two posterior probabilities
satisfy
\[
 \frac{\alpha_1p}{\alpha_1p+1-p}
 <p<
 \frac{p}{p+\alpha_2(1-p)}.
\]
The movement step sends a pre-movement probability $z$ of being at
site~1 to
\[
 az+(1-b)(1-z).
\]
This affine function has slope $\Delta<0$ and therefore reverses the
preceding inequalities.  Its values at the left and right posterior
probabilities are $F_1(p)$ and $F_2(p)$, respectively, so
$F_2(p)<F_1(p)$.  Direct evaluation gives
$F_i(0)=1-b$ and $F_i(1)=a$, proving \eqref{eq:negative-range}.
Since $F_i$ is continuous, $F_i(0)>0$, and $F_i(1)<1$, it has a fixed
point in $(0,1)$.  Because $F_i$ is strictly decreasing,
$p\mapsto F_i(p)-p$ is strictly decreasing, so this fixed point is
unique.  To compare the two fixed points, observe that
\[
 F_2(P_1)<F_1(P_1)=P_1.
\]
Since $p\mapsto F_2(p)-p$ is strictly decreasing and vanishes only at
$P_2$, this inequality implies $P_2<P_1$.
\end{proof}

Let $U_i:=F_i^{-1}:[a,1-b]\to[0,1]$.

\begin{lemma}[Self-cobweb inequalities]
\label{lem:negative-self-cobweb}
For each $i\in\{1,2\}$ and $p\in[a,1-b]$,
\begin{align}
p<P_i
&\Longrightarrow
p<P_i<F_i(p)<U_i(p),
\label{eq:negative-self-low}\\
p>P_i
&\Longrightarrow
U_i(p)<F_i(p)<P_i<p.
\label{eq:negative-self-high}
\end{align}
\end{lemma}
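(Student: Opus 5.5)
The plan is to derive both chains from three facts in Lemma~\ref{lem:negative-contraction}: $F_i$ is strictly decreasing, $P_i$ is its unique fixed point, and $F_i$ is a strict contraction in log odds. Fix $i$ and $p\in[a,1-b]$. Then $U_i(p)=F_i^{-1}(p)$ is well defined in $[0,1]$, because $F_i$ is a continuous strictly decreasing bijection of $[0,1]$ onto $[a,1-b]$ by \eqref{eq:negative-range}.

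\textbf{Low case, $p<P_i$.} The first step is the middle inequality. Since $F_i$ is strictly decreasing and $F_i(P_i)=P_i$, we get $F_i(p)>F_i(P_i)=P_i$.

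For the outer inequality $F_i(p)<U_i(p)$, set $y:=U_i(p)$, so that $F_i(y)=p$. From $F_i(y)=p<P_i=F_i(P_i)$ and monotonicity, $y>P_i$. Thus $p$ and $y$ lie on opposite sides of $P_i$, and $F_i$ maps $y$ to $p$.

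Now pass to log odds. Write $\eta_p,\eta_y,\pi_i$ for the logits of $p,y,P_i$, and let $G$ be $F_i$ in logit coordinates. Then $G$ is strictly decreasing with $|G(\eta)-G(\eta')|\le\kappa_\Phi|\eta-\eta'|$ and $\kappa_\Phi<1$. The contraction gives
\[
 \eta_y-\pi_i>\pi_i-\eta_p ,
\]
because $\pi_i-\eta_p=|G(\eta_y)-G(\pi_i)|\le\kappa_\Phi(\eta_y-\pi_i)<\eta_y-\pi_i$. The contraction also gives
\[
 \operatorname{logit}F_i(p)-\pi_i=|G(\eta_p)-G(\pi_i)|\le\kappa_\Phi(\pi_i-\eta_p)<\pi_i-\eta_p .
\]
Chaining these two estimates yields $\operatorname{logit}F_i(p)-\pi_i<\eta_y-\pi_i$, that is, $F_i(p)<U_i(p)$. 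Strictness uses $p\ne P_i$, so that $\pi_i-\eta_p>0$.

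\textbf{High case, $p>P_i$.} The argument is symmetric with every inequality reversed. Monotonicity gives $F_i(p)<P_i$ and $y=U_i(p)<P_i$. The same two contraction estimates on the other side of $P_i$ give $\pi_i-\eta_y>\eta_p-\pi_i>\pi_i-\operatorname{logit}F_i(p)$, hence $U_i(p)<F_i(p)$.

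\textbf{Main obstacle.} The only delicate point is endpoints where logits are infinite. Because $p\in[a,1-b]\subset(0,1)$ and $P_i\in(0,1)$, the quantities $p$, $P_i$ and $F_i(p)$ all lie in $(0,1)$. However, $U_i(p)$ may equal $0$ or $1$ when $p$ is an endpoint of $[a,1-b]$. In that case $\eta_y=\pm\infty$, and the required inequality $F_i(p)<U_i(p)$ (or its reverse) holds trivially because $F_i(p)\in(0,1)$. Handling this case separately is therefore enough, and the rest of the argument is routine.
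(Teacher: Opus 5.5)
Your proposal is correct and follows essentially the same route as the paper. Monotonicity of $F_i$ together with the fixed point $P_i$ puts $F_i(p)$ and $U_i(p)$ on the far side of $P_i$. The log-odds contraction, applied to the pairs $(p,P_i)$ and $(U_i(p),P_i)$, then gives $0<\operatorname{logit}F_i(p)-\operatorname{logit}P_i<\operatorname{logit}P_i-\operatorname{logit}p<\operatorname{logit}U_i(p)-\operatorname{logit}P_i$. Your separate treatment of the endpoints $p\in\{a,1-b\}$, where $U_i(p)\in\{0,1\}$ has infinite logit, covers a case that the paper's proof leaves implicit.
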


\begin{proof}
Let $z$ and $z_i$ be the log odds of $p$ and $P_i$, respectively.  If
$p<P_i$, then $z<z_i$.
Because the log-odds form of $F_i$ is decreasing and fixes $z_i$,
$\operatorname{logit}(F_i(p))>z_i$ and
$\operatorname{logit}(U_i(p))>z_i$.  Applying the contraction estimate
first to $p$ and $P_i$, and then to $U_i(p)$ and $P_i$, gives
\[
 0<\operatorname{logit}(F_i(p))-z_i
 <z_i-z
 <\operatorname{logit}(U_i(p))-z_i.
\]
Since the logistic map is increasing, this proves
\eqref{eq:negative-self-low}.  The proof of
\eqref{eq:negative-self-high} is identical with all inequalities
reversed.
\end{proof}

\begin{lemma}[Finite inverse cobweb]
\label{lem:negative-finite-cobweb}
Suppose a sequence $(x_k)_{k\ge0}$ satisfies either
\[
 x_{k+1}=U_2U_1(x_k)
 \qquad\text{or}\qquad
 x_{k+1}=U_1U_2(x_k)
\]
for every $k\ge0$ for which the relevant composition is defined.  More
precisely, $U_2U_1(x_k)$ is defined when
$x_k\in[a,1-b]$ and $U_1(x_k)\in[a,1-b]$, while $U_1U_2(x_k)$ is
defined when $x_k\in[a,1-b]$ and $U_2(x_k)\in[a,1-b]$.  If
$x_1\ne x_0$, then the recursion terminates after finitely many steps.
\end{lemma}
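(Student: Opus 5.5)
Work in log odds throughout, so that each $U_i$ becomes a strictly decreasing expansion. By Lemma~\ref{lem:negative-contraction}, the log-odds form of $F_i$ is strictly decreasing with Lipschitz constant at most $\kappa_\Phi<1$. Hence on the log-odds image of $[a,1-b]$ the inverse $U_i$ is strictly decreasing and satisfies
\[
 |\operatorname{logit}U_i(x)-\operatorname{logit}U_i(y)|\ge\kappa_\Phi^{-1}|\operatorname{logit}x-\operatorname{logit}y|.
\]
The composition $G:=U_2U_1$ (or $U_1U_2$) is therefore strictly increasing on its domain of definition. It also expands log-odds distances by a factor at least $\kappa_\Phi^{-2}>1$.

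Next, use monotonicity of $G$ to get a monotone orbit. Suppose $x_1>x_0$. Applying the increasing map $G$ gives $x_2=G(x_1)>G(x_0)=x_1$, and inductively $(x_k)$ is strictly increasing for as long as it is defined. The case $x_1<x_0$ is symmetric, with a decreasing orbit. Expansion then gives
\[
 \operatorname{logit}x_{k+1}-\operatorname{logit}x_k\ge\kappa_\Phi^{-2}\bigl(\operatorname{logit}x_k-\operatorname{logit}x_{k-1}\bigr).
\]
Consequently $\operatorname{logit}x_k-\operatorname{logit}x_0\ge\sum_{j<k}\kappa_\Phi^{-2j}\bigl(\operatorname{logit}x_1-\operatorname{logit}x_0\bigr)$, which is unbounded in $k$.

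Finiteness then follows from boundedness of the domain. Every iterate that can still be continued must lie in $[a,1-b]$, and under \eqref{eq:negative-strict} this is a compact subinterval of $(0,1)$, since $a>0$ and $1-b<1$. Its log-odds image is therefore a bounded interval. A strictly monotone sequence whose log-odds gaps grow geometrically must leave this bounded interval after finitely many steps. At that point the next composition, which requires $x_k\in[a,1-b]$ (and the intermediate point $U_1(x_k)$ or $U_2(x_k)$ in $[a,1-b]$), is undefined, so the recursion terminates.

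The main obstacle is making the expansion estimate rigorous on the partial domain of the composition. The mean value argument requires the two points to lie in an interval on which $G$ is defined. This is satisfied here because the domain of $G$ is an interval: it is the preimage of an interval under a monotone continuous map, intersected with an interval. Since $x_{k-1}$ and $x_k$ both lie in the domain, the whole segment between them does too. Strict monotonicity of $F_i$ on $[0,1]$ also guarantees that $U_i$ is well defined and continuous on $[a,1-b]$. A minor point is that only $x_1\ne x_0$ is needed to start the argument: equality would correspond to a fixed point of $G$, which is exactly the excluded case.
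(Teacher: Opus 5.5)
Your proof is correct and follows essentially the same route as the paper: the global log-odds contraction of the forward maps (the paper writes $x_k=(F_1\circ F_2)(x_{k+1})$) gives expansion by $\kappa_\Phi^{-2}$ for the inverse composition, so the gaps grow geometrically and cannot remain inside the bounded log-odds image of $[a,1-b]$. Your monotone-orbit step and your mean-value/domain discussion are harmless but unnecessary: a single gap eventually exceeds $\operatorname{logit}(1-b)-\operatorname{logit}(a)$, and the expansion already holds pointwise because the forward contraction is global.
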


\begin{proof}
Let $\kappa_\Phi<1$ be the global log-odds contraction constant from
Lemma~\ref{lem:negative-contraction}.  Every two-step forward
composition has Lipschitz constant at most $\kappa_\Phi^2$.  Consider, for
example, the first recursion,
\[
x_{k+1}=U_2U_1(x_k),
\]
so that $x_k=(F_1\circ F_2)(x_{k+1})$.  Writing
$z_k=\log\!\bigl(x_k/(1-x_k)\bigr)$, we obtain
\[
|z_{k+1}-z_k|
\ge
\kappa_\Phi^{-2}|z_k-z_{k-1}|
\]
whenever the three points are defined.  The first gap is nonzero, so the
iteration of this inequality gives
\[
 |z_{k+1}-z_k|
 \ge \kappa_\Phi^{-2k}|z_1-z_0|.
\]
The right-hand side tends to infinity.  If the recursion did not
terminate, then $x_k\in[a,1-b]$ for every $k$, and hence
\[
 |z_{k+1}-z_k|
 \le
 \operatorname{logit}(1-b)-\operatorname{logit}(a)<\infty
 \qquad(k\ge0).
\]
This contradicts the preceding lower bound.  The argument for the
recursion governed by $U_1U_2$ is identical.
\end{proof}

\subsection{Transporting a crossing through a common history}

For a chronological word $u=u_1\cdots u_k$, write
\[
\mathcal B_u
:=
\mathcal B_{u_1}\circ\cdots\circ\mathcal B_{u_k},
\qquad
F_u
:=
F_{u_k}\circ\cdots\circ F_{u_1}.
\]
Thus $\mathcal B_{u_1}$ represents the first search, although it is the
outermost branch operator acting on the continuation function.  We also
write
\[
q_u(p):=\rho(p)A_u\1
\]
for the probability of surviving every search in the prefix $u$.
For the empty word, set
\[
 \mathcal B_\varnothing f:=f,
 \qquad
 F_\varnothing(p):=p,
 \qquad
 q_\varnothing(p):=1.
\]

\begin{lemma}[Common-prefix crossing identity]
\label{lem:negative-pullback}
For every finite word $u$ and functions $f,g:[0,1]\to\R$,
\begin{equation}
(\mathcal B_uf)(p)-(\mathcal B_ug)(p)
=
q_u(p)\bigl[f(F_u(p))-g(F_u(p))\bigr],
\label{eq:negative-pullback}
\end{equation}
where $q_u(p)>0$ under \eqref{eq:negative-strict}.  Consequently, if $f-g$
vanishes at $z$, then the two common-prefix costs
$\mathcal B_uf$ and $\mathcal B_ug$ are equal at every initial belief
$p$ satisfying $F_u(p)=z$.

Moreover, suppose $z\in(0,1)$, $f(z)=g(z)$, and, for some
$0<\epsilon<\min\{z,1-z\}$, either
\[
 f(x)<g(x)\quad(z-\epsilon<x<z),
 \qquad
 f(x)>g(x)\quad(z<x<z+\epsilon),
\]
or both inequalities are reversed.  If $F_u(p)=z$, then
$\mathcal B_uf-\mathcal B_ug$ has the same left--right sign pattern at
$p$ when $|u|$ is even and the reversed left--right sign pattern when
$|u|$ is odd.
\end{lemma}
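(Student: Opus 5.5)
We argue by induction on the length $|u|$, proving the identity \eqref{eq:negative-pullback} and the sign-pattern statement together. For $u=\varnothing$ the identity is trivial: $\mathcal B_\varnothing$ is the identity, $q_\varnothing=1$ and $F_\varnothing(p)=p$. The sign pattern is then unchanged, which matches $|u|=0$ being even.

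For the inductive step we write $u=iu'$ with first letter $i$. Then $\mathcal B_u=\mathcal B_i\circ\mathcal B_{u'}$ and $F_u=F_{u'}\circ F_i$. From \eqref{eq:negative-Bi}, the constant cost $C_i$ cancels in the difference, so
\[
(\mathcal B_uf-\mathcal B_ug)(p)=q_i(p)\bigl[(\mathcal B_{u'}f)(F_i(p))-(\mathcal B_{u'}g)(F_i(p))\bigr].
\]
The induction hypothesis applied at the belief $F_i(p)$ gives the factor $q_{u'}(F_i(p))\,[f-g](F_{u'}(F_i(p)))$. We also need the multiplicativity $q_u(p)=q_i(p)\,q_{u'}(F_i(p))$. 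This follows from $\rho(p)A_i=q_i(p)\rho(F_i(p))$, which is just the matrix form of the Bayes update: the row vector $\rho(p)A_i$ has total mass $q_i(p)$ and first-coordinate ratio $F_i(p)$. Combining these two facts proves \eqref{eq:negative-pullback}. Positivity of $q_u$ holds because under \eqref{eq:negative-strict} each $A_i$ is entrywise positive, so $\rho(p)A_u\1>0$ for every $p\in[0,1]$. The first consequence, equality of the two common-prefix costs at every $p$ with $F_u(p)=z$, is then immediate.

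For the sign pattern, we fix $p$ with $F_u(p)=z$. Since $q_u>0$, the sign of $\mathcal B_uf-\mathcal B_ug$ at a nearby belief $x$ equals the sign of $(f-g)(F_u(x))$. By Lemma~\ref{lem:negative-contraction}, each $F_i$ is continuous and strictly decreasing, so $F_u$ is a composition of $|u|$ strictly decreasing continuous maps. Hence $F_u$ is strictly increasing when $|u|$ is even and strictly decreasing when $|u|$ is odd. By continuity there is a $\delta>0$ with $F_u((p-\delta,p+\delta))\subseteq(z-\epsilon,z+\epsilon)$. Strict monotonicity then sends the left side of $p$ to the left side of $z$ and the right side of $p$ to the right side of $z$ when $|u|$ is even, and swaps the two sides when $|u|$ is odd. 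This yields the preserved or reversed left--right pattern as claimed.

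One point needs care. We need $p$ to be interior so that two-sided neighbourhoods make sense. If $|u|\ge1$, then $z=F_u(p)$ lies in $[a,1-b]$; the endpoints $p\in\{0,1\}$ are themselves allowed, and in that case the conclusion should be read one-sidedly. For $|u|=0$ we have $p=z\in(0,1)$. I expect this endpoint bookkeeping, and keeping the chronological composition order of $F_u$ and $\mathcal B_u$ consistent, to be the only real obstacle; the rest is a routine recursion.
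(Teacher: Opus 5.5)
Your proof is correct and follows the paper's route: cancel the immediate cost $C_i$ and iterate one letter at a time, then read off the sign pattern from positivity of $q_u$ and the parity-dependent monotonicity of $F_u$. Your explicit identity $\rho(p)A_i=q_i(p)\rho(F_i(p))$ supplies the multiplicativity of $q_u$ that the paper's ``iteration'' leaves implicit. Your remark that the conclusion is one-sided when $p\in\{0,1\}$ is a harmless refinement that the paper does not spell out.
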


\begin{proof}
For one action the immediate search costs cancel, leaving
\[
q_i(p)\bigl[f(F_i(p))-g(F_i(p))\bigr].
\]
Iteration proves \eqref{eq:negative-pullback}.  Since each $F_i$ is
strictly decreasing, $F_u$ is strictly increasing when $|u|$ is even
and strictly decreasing when $|u|$ is odd.  Thus, near a point $p$ with
$F_u(p)=z$, the two sides of $p$ are mapped to the same respective
sides of $z$ in the even case and to the opposite sides in the odd
case.  The factor $q_u(p)$ in \eqref{eq:negative-pullback} is positive
and therefore does not change either sign.  This proves the stated
left--right sign rule.
\end{proof}

We shall describe the point $p=F_u^{-1}(z)$, when it exists, as the
crossing at $z$ \emph{pulled back} through the common history $u$.

\subsection{Three central comparisons}

Let $W_1$ and $W_2$ denote the infinite-horizon expected costs of always
searching sites~1 and~2, respectively.  These costs are finite under
\eqref{eq:negative-strict}: after the first movement, always searching
site~1 has detection probability at least $a(1-\alpha_1)>0$ at every
search, and always searching site~2 has detection probability at least
$b(1-\alpha_2)>0$.  The truncated cost functions of both fixed policies
are affine in the initial belief and converge uniformly because the
corresponding survival probabilities decay geometrically.  Their limits
$W_1,W_2$
are therefore positive affine functions and satisfy
\begin{equation}
W_i=\mathcal B_iW_i,\qquad i=1,2.
\label{eq:negative-W-fixed}
\end{equation}

\begin{lemma}[Always-search slopes]
\label{lem:negative-W-slopes}
One has
\begin{equation}
m(W_1)=-\frac{C_1}{1-b}<0,
\qquad
m(W_2)=\frac{C_2}{1-a}>0.
\label{eq:negative-W-slopes}
\end{equation}
\end{lemma}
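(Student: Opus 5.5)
The plan is to read off the slopes from the fixed-point identity \eqref{eq:negative-W-fixed} together with the affine slope identities of Lemma~\ref{lem:negative-affine}. Since $W_i$ is affine, $m(\mathcal B_iW_i)=m(W_i)$. The formula \eqref{eq:negative-B1-slope} alone involves $W_1(0)$, which is itself unknown, so I would also use the intercept equation obtained by evaluating $W_1=\mathcal B_1W_1$ at a convenient belief.

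\textbf{Site~1.} Evaluate at $p=0$. There $q_1(0)=1$ and $F_1(0)=1-b$, so
\[
 W_1(0)=C_1+W_1(1-b)=C_1+W_1(0)+(1-b)m(W_1).
\]
This gives $m(W_1)=-C_1/(1-b)$ directly, and the sign is negative because $b<1$.

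As a consistency check against \eqref{eq:negative-B1-slope}, the slope equation $m(W_1)=\vartheta_1m(W_1)-\beta_1W_1(0)$ can be combined with the $p=1$ evaluation, $W_1(1)=C_1+\alpha_1W_1(a)$. Both should give the same value, but the single evaluation at $p=0$ is already enough.

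\textbf{Site~2.} Evaluate at $p=1$. There $q_2(1)=1$ and $F_2(1)=a$, so
\[
 W_2(1)=C_2+W_2(a)=C_2+W_2(1)-(1-a)m(W_2).
\]
Hence $m(W_2)=C_2/(1-a)>0$.

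\textbf{Justification of the evaluations.} Each evaluation needs $W_i=\mathcal B_iW_i$ to hold pointwise at the endpoint, including the requirement that $q_i$ is nonzero there. This holds because $q_1(0)=q_2(1)=1$, so no perspective convention is needed. The remaining point is that $W_i$ is genuinely affine. That was asserted when the $W_i$ were introduced: they are uniform limits of affine truncated costs, and the fixed-point relation is the limit of the truncated recursion.

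I expect no real obstacle here. The only care needed is choosing the endpoint at which the search surely fails, so that the immediate-detection term vanishes and the unknown intercept cancels.
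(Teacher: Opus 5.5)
Your proposal is correct and matches the paper's proof. The paper likewise evaluates $W_1=\mathcal B_1W_1$ at $p=0$ (using $q_1(0)=1$, $F_1(0)=1-b$) and $W_2=\mathcal B_2W_2$ at $p=1$ (using $q_2(1)=1$, $F_2(1)=a$), so that affinity cancels the unknown intercept and gives the two slopes directly.
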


\begin{proof}
At $p=0$ one has $q_1(0)=1$ and
$F_1(0)=1-b$, so $W_1=\mathcal B_1W_1$ gives
\[
W_1(0)=C_1+W_1(1-b)
=C_1+W_1(0)+(1-b)m(W_1),
\]
and hence $m(W_1)=-C_1/(1-b)$.  For $W_2$, evaluate
$W_2=\mathcal B_2W_2$ at $p=1$, where $q_2(1)=1$ and $F_2(1)=a$, to
obtain
\[
 W_2(1)=C_2+W_2(a)
 =C_2+W_2(1)-(1-a)m(W_2).
\]
Therefore $m(W_2)=C_2/(1-a)$.
\end{proof}

Define the three affine comparison functions
\begin{equation}
G_0:=W_1-W_2,
\qquad
G_1:=W_1-\mathcal B_2W_1,
\qquad
G_2:=\mathcal B_1W_2-W_2.
\label{eq:negative-G}
\end{equation}
By Lemma~\ref{lem:negative-W-slopes}, $G_0$ is strictly decreasing, and
by \eqref{eq:negative-common-cont}, so are $G_1$ and $G_2$.

\begin{lemma}[Fixed-point matching]
\label{lem:negative-fixed-match}
At the two fixed points,
\begin{align}
G_1(P_2)
&=
[1-q_2(P_2)]G_0(P_2),
\label{eq:negative-G1-match}\\
G_2(P_1)
&=
[1-q_1(P_1)]G_0(P_1).
\label{eq:negative-G2-match}
\end{align}
\end{lemma}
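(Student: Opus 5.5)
The plan is to evaluate both sides of each identity directly, using the fixed-point equations \eqref{eq:negative-W-fixed} together with the definition of $P_i$ as the fixed point of $F_i$.

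For \eqref{eq:negative-G1-match}, first use $W_1=\mathcal B_1W_1$ only to note that $W_1$ is a fixed affine function. Then expand
\[
G_1(P_2)=W_1(P_2)-C_2-q_2(P_2)W_1(F_2(P_2))=W_1(P_2)-C_2-q_2(P_2)W_1(P_2),
\]
because $F_2(P_2)=P_2$. Hence $G_1(P_2)=[1-q_2(P_2)]W_1(P_2)-C_2$.

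Next, evaluate $W_2=\mathcal B_2W_2$ at $P_2$ in the same way:
\[
W_2(P_2)=C_2+q_2(P_2)W_2(P_2),
\]
so $C_2=[1-q_2(P_2)]W_2(P_2)$. Substituting this gives
\[
G_1(P_2)=[1-q_2(P_2)]\bigl(W_1(P_2)-W_2(P_2)\bigr)=[1-q_2(P_2)]G_0(P_2).
\]

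The identity \eqref{eq:negative-G2-match} is symmetric. Write
\[
G_2(P_1)=C_1+q_1(P_1)W_2(P_1)-W_2(P_1)=C_1-[1-q_1(P_1)]W_2(P_1),
\]
using $F_1(P_1)=P_1$. The equation $W_1=\mathcal B_1W_1$ at $P_1$ gives $C_1=[1-q_1(P_1)]W_1(P_1)$, and substituting yields $[1-q_1(P_1)]G_0(P_1)$.

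There is no real obstacle. The only points to check are that $P_i\in(0,1)$ exists, which is Lemma~\ref{lem:negative-contraction}, and that \eqref{eq:negative-W-fixed} holds pointwise, which includes the value at $P_i$ with $q_i(P_i)>0$, so $F_i(P_i)$ is defined.
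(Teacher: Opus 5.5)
Your proof is correct and is essentially the paper's argument. Both evaluate $W_i=\mathcal B_iW_i$ and $\mathcal B_iW_j$ at the fixed point $P_i$, where $F_i(P_i)=P_i$, and combine the results; you solve for $C_i$ and substitute, while the paper subtracts the two displayed equations, which amounts to the same computation.
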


\begin{proof}
Since $F_2(P_2)=P_2$,
\[
W_2(P_2)=C_2+q_2(P_2)W_2(P_2),
\]
whereas
\[
(\mathcal B_2W_1)(P_2)
=C_2+q_2(P_2)W_1(P_2).
\]
Subtracting gives \eqref{eq:negative-G1-match}.  The second identity is
obtained in the same way from
\[
 W_1(P_1)=C_1+q_1(P_1)W_1(P_1)
\]
and
\[
 (\mathcal B_1W_2)(P_1)=C_1+q_1(P_1)W_2(P_1).
\]
\end{proof}

Under \eqref{eq:negative-strict}, both $1-q_2(P_2)$ and
$1-q_1(P_1)$ are positive.  Hence
\eqref{eq:negative-G1-match} shows that $G_1(P_2)$ has the same sign as
$G_0(P_2)$, while \eqref{eq:negative-G2-match} shows that $G_2(P_1)$
has the same sign as $G_0(P_1)$.  Together with the strict decrease of
$G_0,G_1$, and $G_2$, these sign relations yield the three cases in the
next proposition.

\begin{proposition}[Three negative-determinant configurations]
\label{prop:negative-three}
If neither $G_0(P_2)$ nor $G_0(P_1)$ is zero, exactly one of the
following occurs.
\begin{enumerate}[label=\textnormal{(\roman*)},leftmargin=2.2em]
\item If $G_0(P_2)<0$, then either $G_1\le0$ on $[0,1]$, in which case
site~1 minimises the Bellman right-hand side at every belief, or $G_1$
has a unique zero
$p_{\mathrm c}$ satisfying $0<p_{\mathrm c}<P_2$.
\item If
\[
G_0(P_2)>0>G_0(P_1),
\]
then $G_0$ has a unique zero $p_{\mathrm c}$ satisfying
$P_2<p_{\mathrm c}<P_1$.
\item If $G_0(P_1)>0$, then either $G_2\ge0$ on $[0,1]$, in which case
site~2 minimises the Bellman right-hand side at every belief, or $G_2$
has a unique zero
$p_{\mathrm c}$ satisfying $P_1<p_{\mathrm c}<1$.
\end{enumerate}
\end{proposition}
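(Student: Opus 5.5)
The argument uses only three ingredients: the strict decrease of the affine functions $G_0,G_1,G_2$ (from Lemma~\ref{lem:negative-W-slopes} and \eqref{eq:negative-common-cont}), the sign-matching identities of Lemma~\ref{lem:negative-fixed-match}, and the ordering $P_2<P_1$ from Lemma~\ref{lem:negative-contraction}. Since $G_0$ is strictly decreasing and $P_2<P_1$, we have $G_0(P_2)>G_0(P_1)$. With neither value zero, the sign pair $(G_0(P_2),G_0(P_1))$ is $(-,-)$, $(+,-)$ or $(+,+)$; the pair $(-,+)$ is excluded by monotonicity. These three possibilities are mutually exclusive and exhaustive, and they correspond exactly to the conditions $G_0(P_2)<0$, $G_0(P_2)>0>G_0(P_1)$ and $G_0(P_1)>0$. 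So exactly one case occurs, and it remains to prove the conclusion within each case.

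\emph{Case (ii).} Here $G_0$ is continuous and strictly decreasing, positive at $P_2$ and negative at $P_1$. By the intermediate value theorem it has a unique zero $p_{\mathrm c}\in(P_2,P_1)$.

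\emph{Case (i).} By \eqref{eq:negative-G1-match} and $1-q_2(P_2)>0$, we get $G_1(P_2)<0$. Since $G_1$ is strictly decreasing, $G_1<0$ on $[P_2,1]$. If $G_1(0)\le0$, then $G_1\le0$ on all of $[0,1]$. Otherwise $G_1(0)>0>G_1(P_2)$, so $G_1$ has a unique zero in $(0,P_2)$.

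\emph{Case (iii).} This is symmetric. By \eqref{eq:negative-G2-match}, $G_2(P_1)>0$, so $G_2>0$ on $[0,P_1]$. Either $G_2(1)\ge0$, giving $G_2\ge0$ throughout, or $G_2$ has a unique zero in $(P_1,1)$.

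The main obstacle is the extra claim in the degenerate subcases (i) and (iii): that a single site minimises the Bellman right-hand side at every belief. This concerns $\mathcal T V$, not just the comparison functions. In case (i) with $G_1\le0$, the natural candidate is $V=W_1$. We have $W_1=\mathcal B_1W_1\le\mathcal B_2W_1$ because $G_1\le0$, so $W_1=\mathcal T W_1$. To identify $W_1$ with $V$, I would iterate: $\mathcal T^nW_1=W_1$ for all $n$. On the other hand, $\mathcal T^n W_1$ differs from $V_n=\mathcal T^n0$ by at most $\sup W_1$ times the survival probability after $n$ searches under a finite-horizon optimal policy. As in the proof of Lemma~\ref{lem:truncation}, that probability is $O(1/n)$, uniformly in $p$. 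Letting $n\to\infty$ and using $V_n\to V$ gives $V=W_1$. Then $Q_1=\mathcal B_1V=V\le\mathcal B_2V=Q_2$ at every belief, so site~1 always minimises. Case (iii) is handled the same way with $W_2$ and $G_2\ge0$.
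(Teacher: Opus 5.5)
Your proof is correct and follows the paper's argument essentially step for step: you get the sign trichotomy from strict decrease of $G_0$ and $P_2<P_1$, then apply the matching identities of Lemma~\ref{lem:negative-fixed-match} together with strict decrease of $G_1$ and $G_2$. The only difference is in the degenerate subcases: the paper merely records that $W_1=\mathcal B_1W_1\le\mathcal B_2W_1$ makes $W_1$ a fixed point of $\mathcal T$ with site~1 minimising, and defers the identification $W_1=V$ to Lemma~\ref{lem:negative-verification} (a geometric survival bound coming from $F_i([0,1])=[a,1-b]$), whereas your sandwich $V_n\le\mathcal T^nW_1=W_1\le V_n+\sup W_1\cdot V_{\mathrm{ref}}/(nC_{\min})$ via the truncation bound of Lemma~\ref{lem:truncation} is a valid, self-contained alternative.
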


\begin{proof}
If $G_0(P_2)<0$, then $G_1(P_2)<0$ by
\eqref{eq:negative-G1-match}.  Since $G_1$ is decreasing, either
$G_1(0)\le0$ and $G_1\le0$ everywhere, or it has one zero in
$(0,P_2)$.  In the first case
\[
W_1=\mathcal B_1W_1\le\mathcal B_2W_1,
\]
so $W_1$ is a Bellman fixed point with site~1 minimising everywhere.
The middle assertion follows from strict monotonicity of $G_0$ and
$P_2<P_1$.  In the third case,
\eqref{eq:negative-G2-match} gives $G_2(P_1)>0$.  Since $G_2$ is
decreasing, either $G_2(1)\ge0$ and hence $G_2\ge0$ on $[0,1]$, or
$G_2$ has a unique zero in $(P_1,1)$.
\end{proof}

The two one-sided configurations are mirror images.  We verify the
lower one-sided case and the interlaced middle case.

\subsection{The one-sided cobweb}
\label{subsec:negative-one-sided}

Consider the second alternative in case~(i), and let $p_{\mathrm c}$
denote the unique zero of $G_1$.  Then
\begin{equation}
p_{\mathrm c}<P_2<P_1,
\qquad
W_1(p_{\mathrm c})=(\mathcal B_2W_1)(p_{\mathrm c}).
\label{eq:negative-class1-anchor}
\end{equation}
Put
\[
\varphi_0^+:=W_1,
\qquad
x_0:=p_{\mathrm c}.
\]
Starting from $x_0$, define $y_k=U_1(x_k)$ when
$x_k\in[a,1-b]$, and then define $x_{k+1}=U_2(y_k)$ when, in addition,
$y_k\in[a,1-b]$.  Thus the recursion stops as soon as either domain
condition fails.  In formulas,
\begin{equation}
y_k:=U_1(x_k),
\qquad
x_{k+1}:=U_2(y_k).
\label{eq:negative-class1-points}
\end{equation}

\begin{lemma}[One-sided cobweb order]
\label{lem:negative-class1-order}
The points generated by \eqref{eq:negative-class1-points} satisfy
\begin{equation}
\cdots<x_2<x_1<x_0=p_{\mathrm c}<y_0<y_1<y_2<\cdots
\label{eq:negative-class1-order}
\end{equation}
until the inverse recursion terminates.
\end{lemma}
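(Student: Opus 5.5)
The plan is to reduce the whole chain to one monotone two-step map. Let
\[
 G:=F_1\circ F_2 ,
\]
the map obtained by searching site~2 and then site~1. By construction $F_1(y_k)=x_k$ and $F_2(x_{k+1})=y_k$, so whenever the recursion is defined
\[
 x_k=G(x_{k+1}),\qquad y_k=U_1(x_k).
\]
By Lemma~\ref{lem:negative-contraction}, each $F_i$ is a strictly decreasing contraction in log odds with constant $\kappa_\Phi<1$. Hence $G$ is strictly increasing and a contraction in log odds with constant $\kappa_\Phi^2$. Consequently $\operatorname{logit}G(x)-\operatorname{logit}x$ is strictly decreasing in $x$, so $G$ has a unique fixed point $Q\in(0,1)$, and
\[
 x<Q\Longrightarrow G(x)>x .
\]

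The key step is to locate $p_{\mathrm c}$ below $Q$. Evaluating $G$ at $P_2$ and using \eqref{eq:negative-F-order} gives
\[
 G(P_2)=F_1(F_2(P_2))=F_1(P_2)>F_2(P_2)=P_2 .
\]
By the sign rule above, this forces $P_2<Q$. Together with \eqref{eq:negative-class1-anchor} we get $x_0=p_{\mathrm c}<P_2<Q$.

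Next comes an induction for the $x$-chain. Suppose $x_k<Q$ and $x_{k+1}$ is defined. Since $G$ is increasing and fixes $Q$, the relation $G(x_{k+1})=x_k<Q=G(Q)$ gives $x_{k+1}<Q$. Then $x_k=G(x_{k+1})>x_{k+1}$. This yields $\cdots<x_2<x_1<x_0$, strictly, for as long as the recursion runs.

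The $y$-chain follows from monotonicity of $U_1$, which is strictly decreasing on $[a,1-b]$ because $F_1$ is strictly decreasing. From $x_{k+1}<x_k$ we get $y_{k+1}=U_1(x_{k+1})>U_1(x_k)=y_k$. To join the two chains, I would apply Lemma~\ref{lem:negative-self-cobweb} with $i=1$. Since $x_0<P_2<P_1$, relation \eqref{eq:negative-self-low} gives $x_0<P_1<U_1(x_0)=y_0$. This proves \eqref{eq:negative-class1-order} up to termination.

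The only delicate point is bookkeeping of domains: every inequality must be used only where $U_1$ and $U_2$ are defined. That is harmless, because each step invokes only points already produced by the recursion, and $G$ itself is defined on all of $[0,1]$. Degenerate termination, for instance $x_0\notin[a,1-b]$ so that no $y_0$ exists, makes the claim vacuous beyond $x_0$. The main conceptual obstacle is identifying the comparison point $Q$, the two-cycle of $F_2$ followed by $F_1$. The inequality $G(P_2)>P_2$ is what connects it to the known bound $p_{\mathrm c}<P_2$.
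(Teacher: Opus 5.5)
Your proof is correct, but it takes a different route from the paper's.

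\textbf{The paper's route.} The paper introduces no new fixed point. It carries the invariant $x_k<P_1$ and makes a one-step comparison. Lemma~\ref{lem:negative-self-cobweb} together with \eqref{eq:negative-F-order} gives $U_1(x_k)>F_1(x_k)>F_2(x_k)$. Applying the decreasing map $U_2$ then yields $x_{k+1}=U_2U_1(x_k)<U_2F_2(x_k)=x_k$. The $y$-chain and the link $x_0<y_0$ are handled exactly as you handle them.

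\textbf{Your route.} You reduce the whole $x$-chain to the backward orbit of one increasing log-odds contraction $G=F_1\circ F_2$. That orbit moves monotonically away from the fixed point $Q$ of $G$. The only problem-specific input is $p_{\mathrm c}<P_2<Q$, obtained from $G(P_2)=F_1(P_2)>P_2$. At that step you use the converse of your displayed sign rule, namely $G(x)>x\Rightarrow x<Q$. This is fine, because you did establish that $\operatorname{logit}G(x)-\operatorname{logit}x$ is strictly decreasing, and that gives both directions.

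\textbf{What each buys.} The paper's version is economical: it reuses lemmas already proved and needs no existence or uniqueness argument for $Q$. Your version is sharper and carries more information, for two reasons.
\begin{enumerate}
\item It shows the descent step $x_{k+1}<x_k$ holds exactly when $x_k<Q$. Moreover $P_1<Q$, since $G(P_1)=F_1(F_2(P_1))>F_1(P_1)=P_1$. So the paper's invariant $x_k<P_1$ is a strictly stronger sufficient condition than the one you need.
\item The contraction constant gives a quantitative escape estimate,
\[
\operatorname{logit}Q-\operatorname{logit}x_{k+1}\ge\kappa_\Phi^{-2}\bigl(\operatorname{logit}Q-\operatorname{logit}x_k\bigr).
\]
This yields termination of the recursion directly, so for this recursion it subsumes Lemma~\ref{lem:negative-finite-cobweb}.
\end{enumerate}
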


\begin{proof}
We prove $x_{k+1}<x_k$ inductively.  The induction starts from
$x_0=p_{\mathrm c}<P_1$.  Suppose $x_k$ has been constructed and $x_k<P_1$.
Whenever $y_k$ and $x_{k+1}$ are defined,
Lemma~\ref{lem:negative-self-cobweb} together with
\eqref{eq:negative-F-order} gives
\[
U_1(x_k)>F_1(x_k)>F_2(x_k).
\]
Applying the decreasing inverse $U_2$ yields
\[
x_{k+1}=U_2(U_1(x_k))
<U_2(F_2(x_k))
=x_k.
\]
In particular, $x_{k+1}<P_1$, which closes the induction.
Since $U_1$ is decreasing, $x_{k+1}<x_k$ implies
$y_{k+1}>y_k$ whenever both points exist.  Finally,
$p_{\mathrm c}<P_1<F_1(p_{\mathrm c})<U_1(p_{\mathrm c})=y_0$ whenever $y_0$ exists.
\end{proof}

Define affine continuation pieces recursively by
\begin{equation}
\varphi_{k+1}^-:=\mathcal B_2\varphi_k^+,
\qquad
\varphi_{k+1}^+:=\mathcal B_1\varphi_{k+1}^-,
\qquad k\ge0.
\label{eq:negative-class1-lines}
\end{equation}
We now define a function $\bar V$ on $[0,1]$ and will prove below that
it satisfies the Bellman equation.  Lemma~\ref{lem:negative-verification}
will then identify $\bar V$ with $V$.  A \emph{cobweb cell} is an interval
on which $\bar V$ is represented by one of these affine functions.  All
displayed cells are included only when their
endpoints exist.  If $y_0=U_1(p_{\mathrm c})$ is undefined, set
$\bar V=\varphi_0^+$ on $[p_{\mathrm c},1]$ and
$\bar V=\varphi_1^-$ on $[0,p_{\mathrm c}]$.  Otherwise set
$\bar V=\varphi_0^+$ on $[p_{\mathrm c},y_0]$ and, whenever the relevant
endpoints exist, set
\begin{equation}
\varphi_{k+1}^-\quad\text{on }[x_{k+1},x_k],
\qquad
\varphi_{k+1}^+\quad\text{on }[y_k,y_{k+1}].
\label{eq:negative-class1-cells}
\end{equation}
Once $y_0$ exists, there are two possible ways for this recursion to
terminate.  If $y_k$
has been constructed but $x_{k+1}=U_2(y_k)$ is undefined, set
\[
 \bar V=\varphi_{k+1}^-\quad\text{on }[0,x_k],
 \qquad
 \bar V=\varphi_{k+1}^+\quad\text{on }[y_k,1].
\]
If $x_{k+1}$ has been constructed but
$y_{k+1}=U_1(x_{k+1})$ is undefined, set
\[
 \bar V=\varphi_{k+2}^-\quad\text{on }[0,x_{k+1}],
 \qquad
 \bar V=\varphi_{k+1}^+\quad\text{on }[y_k,1].
\]
Together with the previously displayed cells, these assignments define
$\bar V$ on all of $[0,1]$.  At a shared endpoint either adjacent
formula may be used provisionally; the crossing argument below shows
that the formulas agree there.

Consider the rule that searches site~2 below $p_{\mathrm c}$ and site~1
above $p_{\mathrm c}$.  For every displayed nonterminal cell, the next
belief under this rule lies in the interval on which the continuation
function in \eqref{eq:negative-class1-lines} is used.  For
$k\ge1$,
\begin{equation}
F_2([x_{k+1},x_k])=[y_{k-1},y_k],
\label{eq:negative-class1-intended-low}
\end{equation}
while
\begin{equation}
F_2([x_1,p_{\mathrm c}])\subset[p_{\mathrm c},y_0]
\label{eq:negative-class1-intended-first}
\end{equation}
for the first lower cell, and
\begin{equation}
F_1([y_k,y_{k+1}])=[x_{k+1},x_k].
\label{eq:negative-class1-intended-high}
\end{equation}
On a terminal interval the corresponding equality above becomes an
inclusion into the last continuation interval.  This follows directly
from the terminal assignments: an undefined $U_i(z)$ means
$z\notin[a,1-b]=F_i([0,1])$, so $F_i(p)$ cannot cross $z$.
Thus this rule satisfies
$\bar V=\mathcal B_2\bar V$ below $p_{\mathrm c}$ and
$\bar V=\mathcal B_1\bar V$ above $p_{\mathrm c}$.  To prove the Bellman
equation, it remains only to show that the competing action has no
smaller cost.

The anchor is the equality
$\varphi_0^+(x_0)=\varphi_1^-(x_0)$.  Pulling it back alternately through
$F_1$ and $F_2$ by Lemma~\ref{lem:negative-pullback} gives
\[
 \varphi_k^+(y_k)=\varphi_{k+1}^+(y_k)\quad(k\ge0)
\]
and
\[
 \varphi_k^-(x_k)=\varphi_{k+1}^-(x_k)\quad(k\ge1)
\]
whenever the indicated points and affine formulas exist.  Thus the two
adjacent formulas agree at each $x_k$ and $y_k$; these points are the
switching points of the piecewise-affine function $\bar V$.  In
particular, $\bar V$ is continuous.  Each affine function
is strictly positive because $W_1>0$ and each branch operation adds a
positive immediate cost.  Moreover, \eqref{eq:negative-sign2} gives a
positive slope for $\varphi_{k+1}^-$ whenever $\varphi_k^+$ has negative
slope, and \eqref{eq:negative-sign1} then gives a negative slope for
$\varphi_{k+1}^+$.  Lemma~\ref{lem:negative-W-slopes} supplies the
initial signs.
It remains to compare slopes within each family.  The affine function
\[
 G_1=\varphi_0^+-\varphi_1^-
\]
is strictly decreasing and vanishes at $x_0=p_{\mathrm c}$.  Hence it
is positive to the left of $x_0$ and negative to the right.  When
$y_0$ exists, pulling this crossing back through $F_1$ gives, at
$y_0=U_1(x_0)$, the
difference
\[
 \varphi_0^+-\varphi_1^+
 =\mathcal B_1\varphi_0^+-\mathcal B_1\varphi_1^-.
\]
By Lemma~\ref{lem:negative-pullback}, this difference is negative to
the left of $y_0$ and positive to the right.  Because it is affine, its
slope is positive, and therefore
$m(\varphi_0^+)>m(\varphi_1^+)$.  If $x_1$ exists, pulling this new
crossing back through $F_2$ gives, at $x_1=U_2(y_0)$, the difference
\[
 \varphi_1^--\varphi_2^-
 =\mathcal B_2\varphi_0^+-\mathcal B_2\varphi_1^+.
\]
Its sign is again positive on the left and negative on the right, so
its slope is negative and
$m(\varphi_1^-)<m(\varphi_2^-)$.  Repeating these two pullbacks
alternately, for as long as the inverse points exist, proves
\begin{align}
0
&<
m(\varphi_1^-)<m(\varphi_2^-)<m(\varphi_3^-)<\cdots,
\label{eq:negative-class1-L-order}\\
0
&>
m(\varphi_0^+)>m(\varphi_1^+)>m(\varphi_2^+)>\cdots.
\label{eq:negative-class1-H-order}
\end{align}
Reading the cells from left to right, these inequalities say that their
slopes are nonincreasing.  Thus $\bar V$ is concave as well as
continuous.

On each cobweb cell, call the affine function defining $\bar V$ there
the \emph{active piece}.  The \emph{competing branch} is the cost of
taking the action not prescribed by the threshold rule for the current
search and then using continuation $\bar V$.  Thus, on a lower cell,
the active action is site~2 and the competing branch is
$\mathcal B_1\bar V$.  Because $\bar V$ is piecewise affine, the
competing branch may itself have several affine subpieces on one cell.
Whenever $y_j$ exists, the endpoint inequalities
\[
F_2(x_j)<F_1(x_j)<U_1(x_j)=y_j
\]
hold.  Consequently, for $k\ge1$, if $y_{k+1}$ exists, then
\[
F_1([x_{k+1},x_k])
\subset[y_{k-1},y_{k+1}].
\]
For the first lower cell, when $y_1$ exists, the corresponding inclusion
is
\[
F_1([x_1,x_0])\subset[p_{\mathrm c},y_1].
\]
If $y_{k+1}$ is undefined, the terminal assignment instead makes
$\varphi_{k+1}^+$ the continuation function on $[y_k,1]$; thus no
additional continuation piece occurs.  Hence, in every case, the
competing site~1 branch has only the affine subpieces
\[
\mathcal B_1\varphi_k^+,
\qquad
\mathcal B_1\varphi_{k+1}^+.
\]
For $k\ge1$ these are
$\mathcal B_1^2\varphi_k^-$ and
$\mathcal B_1^2\varphi_{k+1}^-$; for $k=0$ the first subpiece is
$\mathcal B_1\varphi_0^+=\varphi_0^+$, while the second is
$\mathcal B_1^2\varphi_1^-$.  Lemma~\ref{lem:negative-flattening} and
\eqref{eq:negative-class1-L-order} therefore imply that every affine
subpiece $g$ of $\mathcal B_1\bar V$ on this lower cell satisfies
\begin{equation}
m(g)<m(\varphi_{k+1}^-).
\label{eq:negative-class1-wrong-low-slope}
\end{equation}

Similarly, when $x_{k+2}$ exists, the endpoints of an upper
$\varphi_{k+1}^+$-cell satisfy
\[
x_{k+1}=U_2(y_k)<F_2(y_k)<F_1(y_k)=x_k
\]
and
\[
x_{k+2}=U_2(y_{k+1})<F_2(y_{k+1})<F_1(y_{k+1})=x_{k+1}.
\]
Thus
\[
F_2([y_k,y_{k+1}])\subset[x_{k+2},x_k].
\]
If $x_{k+2}$ is undefined, the terminal assignment makes
$\varphi_{k+2}^-$ the continuation function on $[0,x_{k+1}]$.
Therefore, in either case, the competing site~2 branch meets only
$\varphi_{k+1}^-$ and $\varphi_{k+2}^-$.  Its affine subpieces are
$\mathcal B_2^2\varphi_k^+$ and
$\mathcal B_2^2\varphi_{k+1}^+$.  By
\eqref{eq:negative-class1-H-order}, the underlying slopes are at least
$m(\varphi_{k+1}^+)$, and Lemma~\ref{lem:negative-flattening} strictly
increases each of them.  Hence every affine subpiece $g$ of
$\mathcal B_2\bar V$ on this upper cell satisfies
\begin{equation}
m(g)>m(\varphi_{k+1}^+).
\label{eq:negative-class1-wrong-high-slope}
\end{equation}

The central $\varphi_0^+=W_1$ cell is not among the upper cells in
\eqref{eq:negative-class1-cells}.  If $y_0$ and $x_1$ exist, then
\[
 F_2([p_{\mathrm c},y_0])\subset[x_1,y_0],
\]
whereas if either inverse point is undefined, the corresponding
terminal assignment and the range $F_2([0,1])=[a,1-b]$ show directly
that no other continuation piece is encountered.  Thus on the central
cell the competing site~2 branch can
use only the continuation pieces $\varphi_0^+$ and
$\varphi_1^-$.  Its affine subpieces are therefore
$\mathcal B_2\varphi_0^+=\varphi_1^-$ and
$\mathcal B_2\varphi_1^-=\mathcal B_2^2\varphi_0^+$.  The first has
positive slope, while Lemma~\ref{lem:negative-flattening} gives
\[
 m(\mathcal B_2^2\varphi_0^+)>m(\varphi_0^+).
\]
Thus every affine subpiece of the competing branch on the central cell
has slope greater than $m(\varphi_0^+)$.

Define the excess costs of the competing actions by
\[
\mathcal E_1:=\mathcal B_1\bar V-\bar V\quad(p\le p_{\mathrm c}),
\qquad
\mathcal E_2:=\mathcal B_2\bar V-\bar V\quad(p\ge p_{\mathrm c}).
\]
They are continuous and piecewise affine.  The preceding slope
comparisons give $\mathcal E_1'<0$ on every lower affine subcell and
$\mathcal E_2'>0$ on every upper affine subcell, including the central
cell.  At $p_{\mathrm c}$,
the self-cobweb inequalities place
both $F_1(p_{\mathrm c})$ and $F_2(p_{\mathrm c})$ in the central
$\varphi_0^+=W_1$ cell.  Thus
\[
\mathcal B_1\bar V(p_{\mathrm c})=W_1(p_{\mathrm c})
\]
and, by \eqref{eq:negative-class1-anchor},
\[
\mathcal B_2\bar V(p_{\mathrm c})=W_1(p_{\mathrm c}).
\]
Hence $\mathcal E_1(p_{\mathrm c})=\mathcal E_2(p_{\mathrm c})=0$, and therefore
\begin{equation}
\mathcal E_1(p)\ge0\quad(p<p_{\mathrm c}),
\qquad
\mathcal E_2(p)\ge0\quad(p>p_{\mathrm c}).
\label{eq:negative-class1-excess}
\end{equation}
Thus $\bar V=\mathcal T\bar V$.  Site~2 minimises below
$p_{\mathrm c}$ and site~1 above it, so $p_{\mathrm c}$ is the
threshold of this selector.  The strict ordering in
Lemma~\ref{lem:negative-class1-order}, together with
Lemma~\ref{lem:negative-finite-cobweb}, shows that there are only
finitely many cells.  Every affine piece is obtained
from the positive function $W_1$ by finitely many branch operations, so
$\bar V$ is positive and bounded on $[0,1]$.  The upper one-sided case
is obtained by exchanging $(a,\alpha_1,C_1)$ with
$(b,\alpha_2,C_2)$ and replacing $p$ by $1-p$.

\subsection{The interlaced cobweb}
\label{subsec:negative-interlaced}

In case~(ii), let $p_{\mathrm c}$ denote the unique zero of $G_0$.
Then
\begin{equation}
P_2<p_{\mathrm c}<P_1,
\qquad
W_1(p_{\mathrm c})=W_2(p_{\mathrm c}).
\label{eq:negative-class3-anchor}
\end{equation}
Since $P_2,P_1\in[a,1-b]$ and $P_2<p_{\mathrm c}<P_1$, the point
$p_{\mathrm c}$ also belongs to $[a,1-b]$.  Thus the first inverse
points $\omega_0=U_2(p_{\mathrm c})$ and
$\zeta_0=U_1(p_{\mathrm c})$ exist.
Start with
\[
\lambda_0=\upsilon_0=p_{\mathrm c}
\]
and construct two inverse recursions.  In the first recursion, for
$k\ge0$, define
$\zeta_k=U_1(\lambda_k)$ when $\lambda_k\in[a,1-b]$, and then define
$\lambda_{k+1}=U_2(\zeta_k)$ when, in addition,
$\zeta_k\in[a,1-b]$.  In the second recursion, for $k\ge0$, define
$\omega_k=U_2(\upsilon_k)$ when $\upsilon_k\in[a,1-b]$, and then define
$\upsilon_{k+1}=U_1(\omega_k)$ when, in addition,
$\omega_k\in[a,1-b]$.  Each recursion stops as soon as one of its domain
conditions fails.  In formulas,
\begin{align}
\zeta_k&:=U_1(\lambda_k),
&
\lambda_{k+1}&:=U_2(\zeta_k),
\label{eq:negative-ab}\\
\omega_k&:=U_2(\upsilon_k),
&
\upsilon_{k+1}&:=U_1(\omega_k).
\label{eq:negative-cd}
\end{align}

\begin{lemma}[Interlacing]
\label{lem:negative-interlace}
The points generated by \eqref{eq:negative-ab}--\eqref{eq:negative-cd}
satisfy
\begin{equation}
\cdots<\lambda_2<\omega_1<\lambda_1<\omega_0<p_{\mathrm c}
<\zeta_0<\upsilon_1<\zeta_1<\upsilon_2<\zeta_2<\cdots
\label{eq:negative-interlace}
\end{equation}
whenever the displayed points exist.
\end{lemma}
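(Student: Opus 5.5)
The plan is to derive every inequality in \eqref{eq:negative-interlace} from two facts: each inverse map $U_i=F_i^{-1}$ is strictly decreasing on $[a,1-b]$, and the two central inequalities come from Lemma~\ref{lem:negative-self-cobweb} at $p_{\mathrm c}$. After the base step no further information about the parameters is used. Each comparison between two existing points is obtained by applying the same $U_i$ to an already established strict inequality, which reverses it.

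\textbf{Base step.} By \eqref{eq:negative-class3-anchor}, $P_2<p_{\mathrm c}<P_1$, and $p_{\mathrm c}\in[a,1-b]$. Applying \eqref{eq:negative-self-high} with $i=2$ gives $\omega_0=U_2(p_{\mathrm c})<F_2(p_{\mathrm c})<P_2<p_{\mathrm c}$. Applying \eqref{eq:negative-self-low} with $i=1$ gives $p_{\mathrm c}<P_1<F_1(p_{\mathrm c})<U_1(p_{\mathrm c})=\zeta_0$. So $\omega_0<p_{\mathrm c}<\zeta_0$.

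\textbf{First comparisons.} Now apply the decreasing maps to these two inequalities. From $\zeta_0>p_{\mathrm c}$ we get $\lambda_1=U_2(\zeta_0)<U_2(p_{\mathrm c})=\omega_0$. From $\omega_0<p_{\mathrm c}$ we get $\upsilon_1=U_1(\omega_0)>U_1(p_{\mathrm c})=\zeta_0$.

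\textbf{Induction.} For $k\ge0$, assume that, whenever the points exist,
\[
 \lambda_{k+1}<\omega_k,
 \qquad
 \zeta_k<\upsilon_{k+1}.
\]
The first comparisons give this for $k=0$. Two applications of the decreasing maps then advance the hypothesis by one index.
\begin{enumerate}[label=(\alph*),leftmargin=*]
\item Applying $U_2$ to $\zeta_k<\upsilon_{k+1}$ gives $\omega_{k+1}=U_2(\upsilon_{k+1})<U_2(\zeta_k)=\lambda_{k+1}$. Applying $U_1$ to $\lambda_{k+1}<\omega_k$ gives $\zeta_{k+1}=U_1(\lambda_{k+1})>U_1(\omega_k)=\upsilon_{k+1}$.
\item Applying $U_1$ to $\omega_{k+1}<\lambda_{k+1}$ gives $\upsilon_{k+2}=U_1(\omega_{k+1})>U_1(\lambda_{k+1})=\zeta_{k+1}$. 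Applying $U_2$ to $\upsilon_{k+1}<\zeta_{k+1}$ gives $\lambda_{k+2}=U_2(\zeta_{k+1})<U_2(\upsilon_{k+1})=\omega_{k+1}$.
\end{enumerate}
Step (b) restores the hypothesis at index $k+1$. Taken together, the steps give the left chain $\cdots<\lambda_{k+2}<\omega_{k+1}<\lambda_{k+1}<\omega_k<\cdots<\omega_0<p_{\mathrm c}$. They also give the right chain $p_{\mathrm c}<\zeta_0<\upsilon_1<\zeta_1<\upsilon_2<\cdots$. This is exactly \eqref{eq:negative-interlace}.

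\textbf{Existence of points.} Each inequality is asserted only when both of its points are defined. Every point used in the derivation is either one of these two points or one of their preimages. For instance, $\omega_{k+1}=U_2(\upsilon_{k+1})$ exists only if $\upsilon_{k+1}\in[a,1-b]$, and $\lambda_{k+1}=U_2(\zeta_k)$ exists only if $\zeta_k\in[a,1-b]$. So whenever a displayed comparison is claimed, the earlier points it depends on exist and lie in the domain of the relevant $U_i$, and strict monotonicity applies. Termination of either recursion only truncates the chain and does not affect the inequalities among the surviving points.

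The only step needing real input is the base step, which relies on the fixed-point ordering $P_2<p_{\mathrm c}<P_1$ and the self-cobweb lemma. Everything after it is bookkeeping of alternating decreasing maps.
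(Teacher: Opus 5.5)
Your proof is correct and takes essentially the same route as the paper. Both start from the base inequalities $\omega_0<p_{\mathrm c}<\zeta_0$, which follow from $P_2<p_{\mathrm c}<P_1$, and then propagate them by the strictly decreasing inverses $U_1,U_2$. The only difference is packaging: the paper runs each side separately through the increasing compositions $U_2U_1$ and $U_1U_2$, whereas you alternate single decreasing steps across the two sides. Your existence bookkeeping is, if anything, more explicit than the paper's.
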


\begin{proof}
Because $P_2<p_{\mathrm c}<P_1$,
\[
\omega_0=U_2(p_{\mathrm c})<p_{\mathrm c}<U_1(p_{\mathrm c})=\zeta_0.
\]
Since $U_2$ is decreasing,
\[
\lambda_1=U_2(\zeta_0)<U_2(p_{\mathrm c})=\omega_0,
\]
and since $U_1$ is decreasing,
\[
\upsilon_1=U_1(\omega_0)>U_1(p_{\mathrm c})=\zeta_0.
\]
Moreover $\lambda_1<\omega_0$ implies
\[
\zeta_1=U_1(\lambda_1)>U_1(\omega_0)=\upsilon_1.
\]
The inequality $\upsilon_1>\zeta_0$ also gives
\[
 \omega_1=U_2(\upsilon_1)<U_2(\zeta_0)=\lambda_1.
\]
Because $U_1$ and $U_2$ are decreasing, the compositions $U_2U_1$ and
$U_1U_2$ are increasing.  On the lower side,
\[
\lambda_{k+1}=(U_2U_1)(\lambda_k),
\qquad
\omega_{k+1}=(U_2U_1)(\omega_k).
\]
If $\lambda_{k+1}<\omega_k$, applying $U_2U_1$ gives
$\lambda_{k+2}<\omega_{k+1}$; if
$\omega_{k+1}<\lambda_{k+1}$, applying the same map gives
$\omega_{k+2}<\lambda_{k+2}$.  Hence
\[
 \lambda_{k+1}<\omega_k,
 \qquad
 \omega_{k+1}<\lambda_{k+1}
\]
for every $k$ for which these points exist.  On the upper side,
\[
\zeta_{k+1}=(U_1U_2)(\zeta_k),
\qquad
\upsilon_{k+1}=(U_1U_2)(\upsilon_k).
\]
Starting from $\zeta_0<\upsilon_1<\zeta_1$, applying the increasing map
$U_1U_2$ to all three terms proves
$\zeta_k<\upsilon_{k+1}<\zeta_{k+1}$ inductively.  The lower and upper
inequalities together give
\eqref{eq:negative-interlace}.
\end{proof}

Define four alternating families of affine continuation pieces.  The
superscript $-$ denotes a lower-belief piece and $+$ an upper-belief
piece.  Put
\[
\psi_{-1}^-:=W_2,
\qquad
\psi_{-1}^+:=W_1,
\]
and define
\begin{equation}
\phi_k^-:=\mathcal B_2\psi_{k-1}^+,
\qquad
\phi_k^+:=\mathcal B_1\psi_{k-1}^-,
\qquad
\psi_k^-:=\mathcal B_2\phi_k^+,
\qquad
\psi_k^+:=\mathcal B_1\phi_k^-,
\qquad k\ge0.
\label{eq:negative-class3-recursion}
\end{equation}
We now define a function $\bar V$ on $[0,1]$ and will prove below that
it satisfies the Bellman equation.  Lemma~\ref{lem:negative-verification}
will then identify $\bar V$ with $V$.  On the cobweb cells, set
\begin{align}
\psi_{-1}^-&\quad\text{on }[\omega_0,p_{\mathrm c}],
&
\psi_{-1}^+&\quad\text{on }[p_{\mathrm c},\zeta_0],
\label{eq:negative-class3-centre}\\
\phi_k^-&\quad\text{on }[\lambda_{k+1},\omega_k],
&
\psi_k^-&\quad\text{on }[\omega_{k+1},\lambda_{k+1}],
\label{eq:negative-class3-lower}\\
\phi_k^+&\quad\text{on }[\zeta_k,\upsilon_{k+1}],
&
\psi_k^+&\quad\text{on }[\upsilon_{k+1},\zeta_{k+1}].
\label{eq:negative-class3-upper}
\end{align}
Each displayed cell is used only when both endpoints exist.  The four
possible terminal assignments are as follows:
\begin{align*}
 \lambda_{k+1}\text{ undefined}
 &\quad\Longrightarrow\quad
 \bar V=\phi_k^-\text{ on }[0,\omega_k],\\
 \omega_{k+1}\text{ undefined}
 &\quad\Longrightarrow\quad
 \bar V=\psi_k^-\text{ on }[0,\lambda_{k+1}],\\
 \upsilon_{k+1}\text{ undefined}
 &\quad\Longrightarrow\quad
 \bar V=\phi_k^+\text{ on }[\zeta_k,1],\\
 \zeta_{k+1}\text{ undefined}
 &\quad\Longrightarrow\quad
 \bar V=\psi_k^+\text{ on }[\upsilon_{k+1},1].
\end{align*}
On each side of $p_{\mathrm c}$, the applicable line is the one
corresponding to the first inverse point in that direction that is
undefined.
Indeed, if an inverse $U_i(z)$ is undefined, then
$z\notin[a,1-b]=F_i([0,1])$, so $F_i(p)-z$ has the same sign for every
$p\in[0,1]$.  Thus no further switching point occurs before the
corresponding endpoint.  Together with the displayed cells, the
terminal assignments define $\bar V$ on $[0,1]$.  At a shared endpoint
either adjacent formula may be used provisionally; the crossing
equalities below show that the formulas agree.

Consider the rule that searches site~2 below $p_{\mathrm c}$ and site~1
above $p_{\mathrm c}$.  On each displayed cell, the next belief under
this rule lies in the interval on which the continuation function in
\eqref{eq:negative-class3-recursion} is used:
\begin{align}
F_2([\lambda_{k+1},\omega_k])
&=[\upsilon_k,\zeta_k],
\label{eq:negative-map-A}\\
F_2([\omega_{k+1},\lambda_{k+1}])
&=[\zeta_k,\upsilon_{k+1}],
\label{eq:negative-map-B}\\
F_1([\zeta_k,\upsilon_{k+1}])
&=[\omega_k,\lambda_k],
\label{eq:negative-map-C}\\
F_1([\upsilon_{k+1},\zeta_{k+1}])
&=[\lambda_{k+1},\omega_k].
\label{eq:negative-map-D}
\end{align}
The four intervals on the right-hand sides carry, respectively, the
functions $\psi_{k-1}^+$, $\phi_k^+$, $\psi_{k-1}^-$, and
$\phi_k^-$.  It follows from \eqref{eq:negative-class3-recursion} that
\begin{align*}
 \bar V&=\mathcal B_2\bar V
 &&\text{on every displayed lower cell},\\
 \bar V&=\mathcal B_1\bar V
 &&\text{on every displayed upper cell}.
\end{align*}
The same identities hold on the terminal intervals.  Indeed, the
fixed-sign observation above shows that the relevant map $F_i$ sends
each terminal interval into the continuation cell appearing in the
corresponding formula in \eqref{eq:negative-class3-recursion}.

The central equality in \eqref{eq:negative-class3-anchor} generates the
following equalities between adjacent affine pieces by repeated
application of Lemma~\ref{lem:negative-pullback}:
\begin{align*}
 \psi_{k-1}^-(\omega_k)&=\phi_k^-(\omega_k),
 &
 \phi_k^-(\lambda_{k+1})&=\psi_k^-(\lambda_{k+1}),\\
 \psi_{k-1}^+(\zeta_k)&=\phi_k^+(\zeta_k),
 &
 \phi_k^+(\upsilon_{k+1})&=\psi_k^+(\upsilon_{k+1}),
\end{align*}
for every $k\ge0$ for which the points exist.  Hence each constructed
inverse point is a switching point between its two adjacent affine
pieces, and $\bar V$ is continuous.  All the pieces are strictly
positive because $W_1,W_2>0$ and branch operations add positive
immediate costs.  Lemma~\ref{lem:negative-W-slopes} gives positive slope
to the central lower piece $\psi_{-1}^-=W_2$ and negative slope to the
central upper piece $\psi_{-1}^+=W_1$.  Every other lower piece is
obtained by applying $\mathcal B_2$ to a positive affine upper piece
with negative slope, so \eqref{eq:negative-sign2} gives it positive
slope.  Similarly, \eqref{eq:negative-sign1} gives every other upper
piece negative slope.

It remains to compare slopes within the two families.  The central
difference
\[
G_0=\psi_{-1}^+-\psi_{-1}^-
\]
is strictly decreasing and vanishes at $p_{\mathrm c}$, so it is
positive to the left of $p_{\mathrm c}$ and negative to the right.  At
$\omega_0=U_2(p_{\mathrm c})$,
\[
\psi_{-1}^--\phi_0^-
=\mathcal B_2\psi_{-1}^--\mathcal B_2\psi_{-1}^+.
\]
Lemma~\ref{lem:negative-pullback} shows that this affine difference is
positive to the left of $\omega_0$ and negative to the right.  Its
slope is therefore negative, which gives
$m(\psi_{-1}^-)<m(\phi_0^-)$.  Similarly, at
$\zeta_0=U_1(p_{\mathrm c})$,
\[
\psi_{-1}^+-\phi_0^+
=\mathcal B_1\psi_{-1}^+-\mathcal B_1\psi_{-1}^-.
\]
This difference is negative to the left of $\zeta_0$ and positive to
the right, so its slope is positive and
$m(\psi_{-1}^+)>m(\phi_0^+)$.  Repeating these pullbacks alternately
through $F_1$ and $F_2$, for as long as the inverse points exist, gives
\begin{equation}
0<m(\psi_{-1}^-)<m(\phi_0^-)<m(\psi_0^-)
<m(\phi_1^-)<m(\psi_1^-)<\cdots
\label{eq:negative-positive-slopes}
\end{equation}
and
\begin{equation}
0>m(\psi_{-1}^+)>m(\phi_0^+)>m(\psi_0^+)
>m(\phi_1^+)>m(\psi_1^+)>\cdots.
\label{eq:negative-negative-slopes}
\end{equation}
When the cells are read from left to right, the displayed slopes are
nonincreasing.  Hence $\bar V$ is concave.

For each cobweb cell, its image under the failed-search map corresponding
to the competing action can intersect at most three adjacent cobweb
cells.  Comparing the images of the endpoints gives
\begin{align}
F_1([\lambda_{k+1},\omega_k])
&\subset[\upsilon_k,\zeta_{k+1}],
&&\text{so only }\psi_{k-1}^+,\phi_k^+,\psi_k^+\text{ occur},
\label{eq:negative-inc-A}\\
F_1([\omega_{k+1},\lambda_{k+1}])
&\subset[\zeta_k,\upsilon_{k+2}],
&&\text{so only }\phi_k^+,\psi_k^+,\phi_{k+1}^+\text{ occur},
\label{eq:negative-inc-B}\\
F_2([\zeta_k,\upsilon_{k+1}])
&\subset[\omega_{k+1},\lambda_k],
&&\text{so only }\psi_k^-,\phi_k^-,\psi_{k-1}^-\text{ occur},
\label{eq:negative-inc-C}\\
F_2([\upsilon_{k+1},\zeta_{k+1}])
&\subset[\lambda_{k+2},\omega_k],
&&\text{so only }\phi_{k+1}^-,\psi_k^-,\phi_k^-\text{ occur}.
\label{eq:negative-inc-D}
\end{align}
For example, for \eqref{eq:negative-inc-C},
\[
\omega_{k+1}=U_2(\upsilon_{k+1})
<F_2(\upsilon_{k+1})
<F_1(\upsilon_{k+1})
=\omega_k
\]
and
\[
\lambda_{k+1}=U_2(\zeta_k)
<F_2(\zeta_k)
<F_1(\zeta_k)
=\lambda_k.
\]
Since $F_2$ is decreasing, its image is contained in
$[\omega_{k+1},\lambda_k]$.  The other inclusions follow from the same
endpoint inequalities and the inverse definitions
\eqref{eq:negative-ab}--\eqref{eq:negative-cd}.  If a displayed outer
endpoint has not been constructed because its inverse recursion has
terminated, the fixed-sign argument used in defining $\bar V$ shows
that the forward image stays within the last displayed interval; no
additional affine piece occurs.

Consider a $\phi_k^-$-cell.  The competing site~1 continuations are
$\psi_{k-1}^+,\phi_k^+,\psi_k^+$.  For $k\ge1$, the resulting affine
subpieces are
\[
\mathcal B_1^2\phi_{k-1}^-,
\qquad
\mathcal B_1^2\psi_{k-1}^-,
\qquad
\mathcal B_1^2\phi_k^-.
\]
The underlying slopes are positive and, by
\eqref{eq:negative-positive-slopes}, are at most $m(\phi_k^-)$.
Lemma~\ref{lem:negative-flattening} therefore shows that every
affine subpiece $g$ of the competing branch $\mathcal B_1\bar V$ on
a $\phi_k^-$-cell satisfies
\begin{equation}
m(g)<m(\phi_k^-).
\label{eq:negative-wrong-A}
\end{equation}
For $k=0$, the additional central subpiece is
$\mathcal B_1\psi_{-1}^+=W_1$, which already has negative slope.
On a $\psi_k^-$-cell, the continuation pieces in
\eqref{eq:negative-inc-B} give the subpieces
\[
 \mathcal B_1^2\psi_{k-1}^-,
 \qquad
 \mathcal B_1^2\phi_k^-,
 \qquad
 \mathcal B_1^2\psi_k^-.
\]
Their underlying slopes are positive and no larger than
$m(\psi_k^-)$ by \eqref{eq:negative-positive-slopes}.
Lemma~\ref{lem:negative-flattening} therefore shows that every affine
subpiece $g$ of $\mathcal B_1\bar V$ on a $\psi_k^-$-cell satisfies
\begin{equation}
m(g)<m(\psi_k^-).
\label{eq:negative-wrong-B}
\end{equation}

On a $\phi_k^+$-cell, the continuation pieces in
\eqref{eq:negative-inc-C} give, for $k\ge1$, the subpieces
\[
 \mathcal B_2^2\phi_k^+,
 \qquad
 \mathcal B_2^2\psi_{k-1}^+,
 \qquad
 \mathcal B_2^2\phi_{k-1}^+.
\]
For $k=0$, the last subpiece is the central fixed-point function
$\mathcal B_2W_2=W_2$.  Apart from that positive-slope function, the
underlying slopes are negative and no smaller than $m(\phi_k^+)$ by
\eqref{eq:negative-negative-slopes}.  Lemma~\ref{lem:negative-flattening}
therefore shows that every affine subpiece $g$ of
$\mathcal B_2\bar V$ on a $\phi_k^+$-cell satisfies
\begin{equation}
m(g)>m(\phi_k^+).
\label{eq:negative-wrong-C}
\end{equation}
On a $\psi_k^+$-cell, the continuation pieces in
\eqref{eq:negative-inc-D} give the subpieces
\[
 \mathcal B_2^2\psi_k^+,
 \qquad
 \mathcal B_2^2\phi_k^+,
 \qquad
 \mathcal B_2^2\psi_{k-1}^+.
\]
The same slope ordering and two-step comparison show that every affine
subpiece $g$ of $\mathcal B_2\bar V$ on a $\psi_k^+$-cell satisfies
\begin{equation}
m(g)>m(\psi_k^+).
\label{eq:negative-wrong-D}
\end{equation}

The two central cells require the same comparison but are not included
in \eqref{eq:negative-inc-A}--\eqref{eq:negative-inc-D}.  The inverse
definitions and the self-cobweb inequalities give
\[
 F_1([\omega_0,p_{\mathrm c}])\subset[p_{\mathrm c},\upsilon_1],
 \qquad
 F_2([p_{\mathrm c},\zeta_0])\subset[\lambda_1,p_{\mathrm c}].
\]
These inclusions hold when the displayed outer endpoints exist.  If an
outer endpoint does not exist, the same fixed-sign argument shows that
the image remains in the corresponding terminal interval, and the
comparisons below are unchanged.
Thus, on the central lower cell, where the active piece is
$\psi_{-1}^-=W_2$, the competing site~1 branch has subpieces
$W_1=\mathcal B_1W_1$ and
$\mathcal B_1^2W_2$.  Both have slope smaller than $m(W_2)$: the first
by Lemma~\ref{lem:negative-W-slopes}, and the second by
Lemma~\ref{lem:negative-flattening}.  On the central upper cell, where
the active piece is $\psi_{-1}^+=W_1$, the competing site~2 branch has
subpieces $W_2=\mathcal B_2W_2$ and $\mathcal B_2^2W_1$.  Both have
slope greater than $m(W_1)$, by the same two lemmas.

Define
\[
\mathcal E_1:=\mathcal B_1\bar V-\bar V\qquad(p\le p_{\mathrm c}),
\qquad
\mathcal E_2:=\mathcal B_2\bar V-\bar V\qquad(p\ge p_{\mathrm c}).
\]
They are continuous and piecewise affine, and the preceding slope
inequalities give $\mathcal E_1'<0$ on every lower affine subcell and
$\mathcal E_2'>0$ on every upper affine subcell, including the two central
cells just checked.  Since $p_{\mathrm c}<P_1$,
Lemma~\ref{lem:negative-self-cobweb} places $F_1(p_{\mathrm c})$ in the
central $\psi_{-1}^+=W_1$ cell; since $p_{\mathrm c}>P_2$, it places
$F_2(p_{\mathrm c})$ in the central $\psi_{-1}^-=W_2$ cell.  Using
$W_i=\mathcal B_iW_i$ and \eqref{eq:negative-class3-anchor},
\[
\mathcal E_1(p_{\mathrm c})=\mathcal E_2(p_{\mathrm c})=0.
\]
Hence
\begin{equation}
\mathcal E_1(p)\ge0\quad(p<p_{\mathrm c}),
\qquad
\mathcal E_2(p)\ge0\quad(p>p_{\mathrm c}),
\label{eq:negative-class3-excess}
\end{equation}
and again $\bar V=\mathcal T\bar V$ with site~2 minimising below
$p_{\mathrm c}$ and site~1 above it.  Consequently
$p_{\mathrm c}$ is a threshold for this minimising selector.  The
strict inequalities in Lemma~\ref{lem:negative-interlace}, together with
Lemma~\ref{lem:negative-finite-cobweb}, show that the construction has
only finitely many cells.  Every affine
piece is obtained from $W_1$ or $W_2$ by finitely many branch
operations; it is therefore positive.  Hence $\bar V$ is bounded and
positive on $[0,1]$.

\subsection{Verification and completion of the proof}

The cobweb constructions produce bounded positive Bellman fixed points.
The next elementary verification identifies them with the optimal value.

\begin{lemma}[Verification for $\Delta<0$]
\label{lem:negative-verification}
Let $\bar V$ be a bounded nonnegative solution of
$\bar V=\mathcal T\bar V$ under \eqref{eq:negative-strict}, and let a
stationary policy $\mu$ select a minimising action at every belief.  Then,
for every initial belief $p$, its expected total cost is $\bar V(p)$,
and $\bar V(p)=V(p)$.
\end{lemma}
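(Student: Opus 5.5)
The plan is to mimic the proof of Proposition~\ref{prop:verification}, replacing $V$ by $\bar V$ throughout, and then to sandwich $\bar V$ between the cost of $\mu$ and the infimum $V$. Fix $p$ and $n\ge1$, and let $w=w_1\cdots w_n$ be the word generated by $\mu$ along the history in which all $n$ searches fail. Since $\mu$ attains the minimum in $\bar V=\mathcal T\bar V$, we have $\bar V=\mathcal B_{\mu}\bar V$ at every belief. Substituting this identity $n$ times, exactly as in \eqref{eq:verification-iterate}, gives
\[
 \bar V(p)=J_w(p)+\rho(p)A_w\1\,\bar V\!\left(\frac{(\rho(p)A_w)_1}{\rho(p)A_w\1}\right).
\]
Under \eqref{eq:negative-strict} the survival probability $\rho(p)A_w\1$ is positive, so no convention is needed for the continuation term.

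From $\bar V\ge0$ we get $J_w(p)\le\bar V(p)\le\|\bar V\|_\infty$. As in the positive case, $J_w(p)\ge nC_{\min}\rho(p)A_w\1$, and therefore $\rho(p)A_w\1\le\|\bar V\|_\infty/(nC_{\min})$. Consequently
\[
0\le\bar V(p)-J_w(p)\le\|\bar V\|_\infty^2/(nC_{\min}).
\]
The words for successive $n$ are consistent prefixes, and the truncated costs increase to the total cost under $\mu$. Monotone convergence therefore shows that the expected total cost of $\mu$ equals $\lim_n J_w(p)=\bar V(p)$. In particular this cost is finite, and $V\le\bar V$.

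For the reverse inequality I will show $\bar V\le V_n+\varepsilon_n$ with $\varepsilon_n\to0$, and then invoke Lemma~\ref{lem:truncation}, which gives $V_n\le V$ (the reference-policy bound \eqref{eq:Vref} holds for all $a,b$). Since $\bar V=\mathcal T\bar V$ and $\mathcal T$ is monotone, $\bar V=\mathcal T^n\bar V$. Comparing with $V_n=\mathcal T^n0$ in the open-loop form is the cleanest route. For any word $w$ of length $n$, unrolling $\mathcal T\bar V\le\mathcal B_i\bar V$ along $w$ gives
\[
 \bar V(p)\le J_w(p)+\rho(p)A_w\1\,\|\bar V\|_\infty.
\]
Taking $w$ to be a minimiser in Proposition~\ref{prop:open-loop} gives $J_w(p)=V_n(p)$.

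The main obstacle is that the survival probability of this minimising word is not obviously small. I would bound it exactly as in Lemma~\ref{lem:truncation}: $nC_{\min}\rho(p)A_w\1\le J_w(p)=V_n(p)\le V_{\mathrm{ref}}$. This yields $\bar V(p)\le V_n(p)+V_{\mathrm{ref}}\|\bar V\|_\infty/(nC_{\min})\le V(p)+o(1)$. Letting $n\to\infty$ gives $\bar V\le V$, and hence $\bar V=V$, with $\mu$ attaining it.
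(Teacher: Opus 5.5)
Your proof is correct, but it takes a different route from the paper.

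The paper exploits the negative-determinant geometry. After the first movement every belief lies in $[a,1-b]$, so under \emph{any} policy each later search detects with conditional probability at least $\varepsilon=\min\{a(1-\alpha_1),b(1-\alpha_2)\}$. The survival probability therefore decays like $(1-\varepsilon)^{n-1}$, uniformly over policies. With this bound, the paper iterates $\bar V\le\mathcal B_i\bar V$ along an arbitrary history-dependent, randomised policy $\pi$ and lets the tail term vanish. It concludes directly that $\bar V(p)$ is at most the expected cost of $\pi$, and iterating the equality along $\mu$ gives the other direction.

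You never use the trapping interval or the sign of $\Delta$. Instead you control survival by the cost-based bound $nC_{\min}\,\rho(p)A_w\1\le J_w(p)$, applied twice:
\begin{enumerate}
\item to the word generated by $\mu$, where $J_w\le\bar V$ by nonnegativity of $\bar V$;
\item to an open-loop minimiser for $V_n$, where $J_w=V_n\le V_{\mathrm{ref}}$.
\end{enumerate}
You then delegate the comparison with general policies to Proposition~\ref{prop:open-loop} and the trivial inequality $V_n\le V$.

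In effect you transplant the mechanism of Lemma~\ref{lem:truncation} and Proposition~\ref{prop:verification} to an arbitrary bounded nonnegative fixed point. Your argument therefore works verbatim whenever $\alpha_1,\alpha_2<1$ and $C_1,C_2>0$, for any $a,b$. It also shows in passing that $V_n\le V\le\bar V\le V_n+O(1/n)$, so such a fixed point is unique.

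The paper's version keeps the appendix self-contained, needing neither $V_{\mathrm{ref}}$ nor the open-loop reduction. It also gives a geometric rather than $O(1/n)$ tail estimate. The price is that it depends on $0<a,b<1$ and on the $\Delta<0$ range $F_i([0,1])=[a,1-b]$.
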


\begin{proof}
After the first unsuccessful search and movement, every belief lies in
$[a,1-b]$.  Hence at every subsequent epoch the probability of site~1
is at least $a$ and the probability of site~2 is at least $b$.  Whatever
action is chosen, the conditional probability of detection is therefore
at least
\[
\varepsilon
:=\min\{a(1-\alpha_1),\,b(1-\alpha_2)\}>0.
\]
Thus the probability of surviving $n$ further searches under any policy
is at most $(1-\varepsilon)^n$.

For an arbitrary policy $\pi$, let
$\tau\in\{1,2,\ldots\}\cup\{\infty\}$ be the epoch at which detection
occurs, with $\tau=\infty$ if detection never occurs, and set
\[
 \tau_n:=\min\{n,\tau\}.
\]
Let $\iota_k$ be the site searched at epoch $k$, and let
$\mathcal S_n:=\{\tau>n\}$ be the event that the first $n$ searches are
all unsuccessful.  On $\mathcal S_n$, write
$p_{n+1}$ for the belief after the resulting movement.  Iterating the
Bellman inequalities for $\bar V$ along this history gives
\[
\bar V(p)
\le
\EE_\pi\!\left[\sum_{k=1}^{\tau_n}C_{\iota_k}\right]
+
\EE_\pi[\1_{\mathcal S_n}\bar V(p_{n+1})].
\]
After the first search, the geometric bound above gives
\[
\PP_\pi(\mathcal S_n)\le(1-\varepsilon)^{n-1}\qquad(n\ge1).
\]
Since $\bar V$ is bounded and nonnegative,
\[
0\le
\EE_\pi[\1_{\mathcal S_n}\bar V(p_{n+1})]
\le
\|\bar V\|_\infty(1-\varepsilon)^{n-1}
\longrightarrow0.
\]
By definition, $\tau_n\uparrow\tau$, and
\[
\sum_{k=1}^{\tau_n}C_{\iota_k}
\uparrow
\sum_{k=1}^{\tau}C_{\iota_k}.
\]
The monotone convergence theorem therefore gives
\[
\bar V(p)
\le
\EE_\pi\!\left[\sum_{k=1}^{\tau}C_{\iota_k}\right].
\]
Taking the infimum over all policies $\pi$ yields
$\bar V(p)\le V(p)$.

For the policy $\mu$, the selected action attains the Bellman minimum,
so the iterated relation is an equality:
\[
\bar V(p)
=
\EE_\mu\!\left[\sum_{k=1}^{\tau_n}C_{\iota_k}\right]
+
\EE_\mu[\1_{\mathcal S_n}\bar V(p_{n+1})].
\]
Applying the two limits above under $\mu$ gives
\[
\bar V(p)
=
\EE_\mu\!\left[\sum_{k=1}^{\tau}C_{\iota_k}\right]
\ge V(p).
\]
Thus $\bar V(p)=V(p)$ for every $p\in[0,1]$, and $\mu$ is optimal.
\end{proof}

\begin{proof}[Proof of Theorem~\ref{thm:negative}]
Proposition~\ref{prop:negative-three} exhausts all cases in which
$G_0(P_2)$ and $G_0(P_1)$ are nonzero.  In case~(i), if
$G_1\le0$ on $[0,1]$, then $W_1$ is a Bellman fixed point with site~1
minimising everywhere.  In case~(iii), if $G_2\ge0$ on $[0,1]$, then
$W_2$ is a Bellman fixed point with site~2 minimising everywhere.
Lemma~\ref{lem:negative-verification} proves optimality in both cases.
The remaining possibility in case~(i) is proved in
Section~\ref{subsec:negative-one-sided}; the upper one-sided case follows
by site exchange; and the middle case is proved in
Section~\ref{subsec:negative-interlaced}.
Lemma~\ref{lem:negative-verification} identifies each constructed Bellman
fixed point with $V$.

It remains only to close the two boundaries between the three cobweb
configurations,
$G_0(P_2)=0$ and $G_0(P_1)=0$.  For $c>0$, let $\theta(c)$ be the
parameter vector obtained by replacing $C_1$ with $c$ and leaving all
other parameters fixed.  Write $W_1^{(c)}$ for the cost of always
searching site~1 under $\theta(c)$.  If $h_1(p)$ denotes the expected
number of searches under this policy, then
\[
 W_1^{(c)}(p)=c h_1(p),
\]
where $0<h_1(p)<\infty$ and $h_1$ does not depend on $c$.  The function
$W_2$ and the fixed points $P_1,P_2$ also do not depend on $c$.
Set $G_0^{(c)}:=W_1^{(c)}-W_2$.  Consequently, for
$i\in\{1,2\}$,
\[
 G_0^{(c)}(P_i)=c h_1(P_i)-W_2(P_i),
\]
and the equation $G_0^{(c)}(P_i)=0$ has the unique solution
\[
 \widehat c_i:=\frac{W_2(P_i)}{h_1(P_i)}>0.
\]

Now fix a boundary value $c_0>0$ for which
$G_0^{(c_0)}(P_2)=0$ or $G_0^{(c_0)}(P_1)=0$.  Choose
\[
 c^{(j)}>0,
 \qquad
 c^{(j)}\longrightarrow c_0,
 \qquad
 c^{(j)}\notin\{\widehat c_1,\widehat c_2\}
 \quad(j\ge1).
\]
For every $j$, Proposition~\ref{prop:negative-three} and the cobweb
verification above imply that $D_{\theta(c^{(j)})}$ has no reverse strict
crossing; explicitly, for all $p,p'\in[0,1]$,
\[
 p<p',
 \qquad
 D_{\theta(c^{(j)})}(p)<0
 \quad\Longrightarrow\quad
 D_{\theta(c^{(j)})}(p')\le0.
\]
Moreover, Lemma~\ref{lem:D-parameter-continuity} gives, for each
$x\in[0,1]$,
\begin{equation}
 D_{\theta(c^{(j)})}(x)\longrightarrow D_{\theta(c_0)}(x).
 \label{eq:negative-boundary-D-limit}
\end{equation}
Suppose, to the contrary, that there were $p<p'$ such that
\[
 D_{\theta(c_0)}(p)<0<D_{\theta(c_0)}(p').
\]
Set
\[
 \varepsilon
 :=\frac12\min\{-D_{\theta(c_0)}(p),D_{\theta(c_0)}(p')\}>0.
\]
By \eqref{eq:negative-boundary-D-limit}, for all sufficiently large $j$,
\[
 \left|D_{\theta(c^{(j)})}(p)-D_{\theta(c_0)}(p)\right|<\varepsilon,
 \qquad
 \left|D_{\theta(c^{(j)})}(p')-D_{\theta(c_0)}(p')\right|<\varepsilon.
\]
Hence
\[
 D_{\theta(c^{(j)})}(p)<0<D_{\theta(c^{(j)})}(p'),
\]
contradicting the no-reverse-crossing property at $c^{(j)}$.  Thus that
property also holds at the two boundary values.  The supremum construction
in \eqref{eq:infinite-threshold}, and the argument following that
definition, therefore give a threshold minimising selector.
Proposition~\ref{prop:verification} verifies its optimality.  This
completes the strict negative-determinant case.
\end{proof}

\begin{remark}[What the classical cobweb algebra is measuring]
\label{rem:negative-interpretation}
The three configurations correspond exactly to the three possible
locations of the central threshold relative to $P_2<P_1$.  The
fixed-point identities
\eqref{eq:negative-G1-match}--\eqref{eq:negative-G2-match} show that
their boundaries are precisely
the Bellman consistency conditions at $P_2$ and $P_1$.  Once the central
crossing is fixed, every later switching point is obtained by taking its
inverse image under a finite composition of the failed-search maps.
Thus the apparently separate coefficient calculations in the classical
cobweb construction are all encoded by the crossing-transport identity
in Lemma~\ref{lem:negative-pullback}.
\end{remark}

\end{document}